\numberwithin{equation}{section}
\newcommand\reallywidehat[1]{
\savestack{\tmpbox}{\stretchto{\scaleto{\scalerel*[\widthof{\ensuremath{#1}}]{\kern-.6pt\bigwedge\kern-.6pt}{\rule[-\textheight/2]{1ex}{\textheight}}}{\textheight}}{0.5ex}}\stackon[1pt]{#1}{\tmpbox}
}
\newtheorem{teo}{Theorem}[section]
\newtheorem{cor}[teo]{Corollary}
\newtheorem{lem}[teo]{Lemma}
\newtheorem{prop}[teo]{Proposition}
\newtheorem{rem} [teo] {Remark}
\newcommand{\R}{\mathbb{R}}
\begin{document}
\title[Geometric assumptions for unique continuation from the edge of a crack]
{A note on geometric assumptions for unique continuation from the edge of a crack}

\author{Alessandra De Luca}

\address{Alessandra De Luca 
\newline \indent Dipartimento di Matematica Giuseppe Peano, Universit\`{a} degli studi di Torino
\newline \indent  Via Carlo Alberto 10, 10123, Torino, Italy}
\email{a.deluca@unito.it}
\date{July 25, 2024}
\thanks{The author is member of the GNAMPA research group of INdAM - Istituto Nazionale di Alta Matematica, partially supported by the INDAM-GNAMPA
    2024 grant ``Nuove frontiere nella capillarit\`{a} non locale'' CUP E53C23001670001.}
    \keywords{Unique continuation; monotonicity formula; local asymptotics; crack.}
 
 \begin{abstract}  
The present paper aims at representing an improvement of the result in \cite{DelFel},  
where a strong unique continuation property and a description of the local behaviour around the edge of a crack for solutions to an elliptic problem are established, by relaxing the star-shapedness condition on the complement of the crack. 
More specifically, this assumption will be dropped off by applying a suitable diffeomorphism which straightens the boundary of the crack, before performing the approximation procedure 
developed in \cite{DelFel} in order to derive a suitable monotonicity formula. This will yield the appearence of a matrix  in the equation, which shall be handled appropriately: for this we will take a hint from \cite{DelFelVit}.
\end{abstract}

\maketitle
\section{Introduction}
We start our dissertation by summarizing the problem investigated in \cite{DelFel}, if any, including additional details. 
In \cite{DelFel} the authors provide a suitable monotonicity argument to prove local asymptotics and consequently a strong unique continuation principle at the origin
for solutions to the elliptic equation  
\begin{equation*}
-\Delta u=fu \quad \text{in $\Omega\setminus \Gamma$}, 
\end{equation*}
complemented with the partial homogeneous Dirichlet boundary condition
\begin{equation*}
u=0\quad \text{on $\Gamma\cap \Omega$},
\end{equation*}
where $\Gamma$ is a closed set in $\mathbb{R}^N$ with $N\geq 2$ (more explicitly described below in \eqref{gamma}), $\Omega$ is a bounded open domain in $\mathbb{R}^{N+1}$ such that $0\in \Omega\cap \partial \Gamma$ (as depicted in Figure \ref{fig:omega}), and $f\in L^\infty_{\mathrm{loc}}(\Omega\setminus\{0\} )$ satisfies two sets of alternative assumptions (inspired by \cite{Sus1, Sus2}) reformulated as follows. For every $r>0$ we denote by $B_r$ the open ball in $\R^{N+1}$ centered at 0 with radius $r$, namely 
$$B_r:=\{x=(x',x_N,x_{N+1})\in \mathbb{R}^{N-1}\times \mathbb{R} \times \mathbb{R} : |x'|^2+x_N^2+x_{N+1}^2<r^2\}.$$
If $\bar{r}>0$ is such that $B_{\bar{r}}\subset \Omega $, for every $r\in (0,\bar{r})$ and $h\in L^\infty_{\mathrm{loc}}(B_{\bar{r}}\setminus\{0\})$, let
\begin{equation*}
\xi_h(r):=\sup_{x\in \overline{B_r}}|h(x)||x|^2 \qquad\text{and}\qquad \eta_h(r):= \sup _{u\in H^1(B_r)\setminus\{0\}}\frac{\int_{B_r}|h|u^2\,dx}{\int_{B_r}|\nabla u|^2\,dx+\frac{N-1}{2r}\int_{\partial B_r} |u|^2\, ds},
\end{equation*}
where $ds$ denotes the volume element on the sphere $\partial B_r$, and with abuse of notation we still denote by $u$ its trace on $\partial B_r$.
The potential $f$ is also assumed to enjoy the following two sets of alternative assumptions:
\begin{itemize}
\item[(i)] $\displaystyle\lim_{r\to 0^+}\xi_f(r)= 0$, $\displaystyle\frac{\xi_f(r)}{r}\in L^1(0,\bar{r})$, $\displaystyle\frac{1}{r}\int_0^r \frac{\xi_f(t)}{t}\,dt\in L^1(0,\bar{r})$;
\item[(ii)] $\displaystyle\lim_{r\to 0^+}\eta_f(r)= 0$, $\displaystyle\frac{\eta_f(r)}{r}\in L^1(0,\bar{r})$, $\displaystyle\frac{1}{r}\int_0^r \frac{\eta_f(t)}{t}\,dt\in L^1(0,\bar{r})$ and also $\nabla f \in L^\infty_{\mathrm{loc}}(B_{\bar{r}}\setminus \{0\})$, $\displaystyle\frac{\eta_{\nabla f\cdot x}(r)}{r}\in L^1(0,\bar{r})$, $\displaystyle\frac{1}{r}\int_0^r \frac{\eta_{\nabla f\cdot x}(t)}{t}\,dt\in L^1(0,\bar{r})$.
\end{itemize}
\begin{rem}\label{esempi}
It is easy to check that the set of assumptions (i) is satisfied if for example $$|f(x)|= O(|x|^{-2+\delta}) \ \text{as $|x|\to 0$ for some $\delta>0$},$$
and the set of assumptions (ii) if for instance
$$\nabla f \in L^\infty_{\mathrm{loc}}(B_{\bar{r}}\setminus \{0\})\quad\text{and}\quad f,\nabla f\in L^p(B_{\bar{r}})\ \text{for some $p>(N+1)/2$.}$$
Indeed in the latter case, we recall that 
for every $r>0$ and $u\in H^1(B_r)$
\begin{equation}\label{stimadiL2star}
\left(\int_{B_{r}}|u(x)|^{2^\ast} dx\right)^{\frac{2}{2^\ast}} \leq C_{N,p} \left(\int_{B_r} |\nabla u|^2\, dx + \frac{N-1}{2r}\int_{\partial B_r} u^2\, ds\right),
\end{equation}
for some constant $C_{N,p}>0$ depending only on $N$ and $p$, where $2^\ast= 2(N+1)/(N-1)$.
Therefore, being $\omega_{N+1}$ the $N+1$-dimensional Lebesgue measure of $B_1$ and setting
\begin{equation}\label{SNp}
S_{N,p}:= \omega_{N+1}^{\frac{2p-N-1}{pN}},
\end{equation}
thanks to the H\"{o}lder inequality, for all $r\in (0,\bar{r}]$ and $u\in H^1(B_r)$ it holds that
\begin{equation*}\label{eq2}
\begin{split}
\int_{B_r}|f| u^2 \, dx&\leq S_{N,p}\, r ^{\frac{2p-N-1}{p}}\left(\int_{B_{\bar{r}}}|f|^p dx\right)^{\frac{1}{p}}\left(\int_{B_{r}}|u(x)|^{2^\ast} dx\right)^{\frac{2}{2^\ast}}\\
&\leq \,S_{N,p}C_{N,p} \Vert f\Vert_{L^p(B_{\bar{r}})} r ^{\frac{2p-N-1}{p}} \left( \int_{B_r}|\nabla u|^2\, dx +  \frac{N-1}{2r}\int_{\partial B_r}u^2\,ds\right).\\
\end{split}
\end{equation*}
Repeating the same argument with $\nabla f$ in place of $f$, we deduce that the assumptions in (ii) are trivially satisfied.

Furthermore the property $\lim_{r\to 0^+}\eta_f(r)=0$ of the set of assumptions (ii) is verified if for instance $f$ is in the Kato class $K_{N+1}$, namely by definition (see e.g. \cite{FabGarLin}) if  
\begin{equation*}
\tilde{\eta}_f(r) := \sup_{x\in\mathbb{R}^{N+1}}\int_{|x-y|<r}\frac{|f(y)|}{|x-y|^{N-1}}\, dy\to 0^+\quad \text{as $r\to 0^+$}. 
\end{equation*} 
Indeed, in \cite[Lemma 1.1]{FabGarLin} the authors prove that if $V\in K_{N+1}$ with $N\geq 2$, then there exists a dimensional constant $C>0$ such that
\begin{equation*}
\int_{B_r} |V|u^2\,dx \leq C\tilde{\eta}_V(r)\left(\int_{B_r}|\nabla u|^2\,dx + \frac{1}{r}\int_{\partial B_r}u^2\,ds\right)
\end{equation*}
for every $u\in C^\infty(\mathbb{R}^{N+1})$ and for every $r>0$. 
\end{rem}
\begin{figure}[ht]
\begin{tikzpicture}[line cap=round,line join=round,>=triangle 45, scale=2]
\draw  (-1,0) to [out=120, in=120] (-0.5,0.7) to [out=30, in=180] (0,1) to [out=0, in=20] (0.43,0)  -- (0,0);
\draw (-1,0) to [out=300, in=180 ] (-0.3,-0.5) to [out=0, in=180 ] (0, -0.5)to [out=310, in=180 ] (0.33,-0.8) to [out=0, in=340 ] (0.43,0) ;
\draw  (0.83,0) -- (0,0);
\draw  (0,0.12) node { $0$};
\draw  (-0.615, 0.3) node {$\Omega$};
\draw  (0.315, 0.12) node {$\Gamma$};

\end{tikzpicture}
\caption{The section of a possible configuration of the domain $\Omega$ with a fracture $\Gamma$}
\label{fig:omega}
\end{figure}
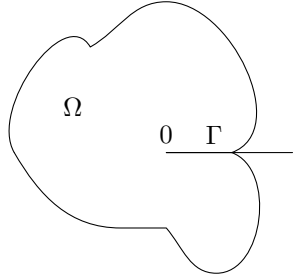

As for assumptions on $\Gamma$, in \cite{DelFel} the authors assume that
\begin{equation}\label{gamma}
\Gamma= \{(x',x_N)\in \mathbb{R}^{N-1}\times\mathbb{R}:\,x_N\geq g(x')\},
\end{equation} 
for some function $g\in C^2(\mathbb{R}^{N-1},\mathbb{R})$, satisfying
\begin{equation}\label{assumtpionssug}
g(0)=0 \qquad \text{and}\qquad \nabla g(0)=0,
\end{equation}
namely, $\partial \Gamma$ is tangent to the hyperplane $\{x_N=0\}$ at 0. 
Moreover an additional assumption on $\Gamma$ is given by 
\begin{equation}\label{assumincriminata}
g(x')-\nabla g(x')\cdot x' \geq 0 \quad \text{in $B'_{\bar{r}}:=\{x'\in \mathbb{R}^{N-1}: |x'|<\bar{r}\}$},
\end{equation}
that is, the complement of $\Gamma$ is star-shaped with respect to the origin in a neighbourhood of the origin; in fact, the outward unit normal vector to $\Gamma^c$ at any point $(x',x_N)\in \partial \Gamma^c$ is given by 
\begin{equation*}
\frac{(-\nabla g(x'), 1)}{\sqrt{1+|\nabla g(x')|^2}}.
\end{equation*} 
We underline that all the assumptions are made in a neighbourhood of the origin since the study is of local type. 
In the above setting the authors of \cite{DelFel}, inspired by the procedure developed by Garofalo and Lin in \cite{garofalo}, derive a Pohozaev-type inequality in order to get an estimate from below of the derivative of the Almgren function,   via an approximation argument based on the study of a sequence of boundary value problems on certain approximating domains for which more regularity is available. 

In the present paper we aim at proving the same results in \cite{DelFel} removing assumption \eqref{assumincriminata}, 
on which in \cite{DelFel}
the property of the approximating domains to be star-shaped with respect to the origin heavily relies, since the left-hand side of \eqref{assumincriminata} appears in the computation of the outer unit normal vector to the boundary of the approximating domains (we refer to \cite[Lemma 2.4]{DelFel}). To this purpose, we apply a local diffeomorphism (the same as in \cite{DelFelSic} and \cite{DelFelVit}) straightening the edge of the crack $\Gamma$ around the origin before carrying out our approximation procedure; this will have the effect of dealing with an operator in divergence form in the presence of a non-identity matrix.

We are particularly motivated to remove the star-shapedness condition not only to give a generalization of the results in \cite{DelFel}, but also because in this way the problem turns out to be directly related to the problem investigated in \cite{DelFelVit} when $s = 1/2$ in virtue of a
strong connection (see \cite[Section 1]{DelFel} for more details about this) with the mixed Dirichlet-Neumann boundary 
value problem resulting after applying the Caffarelli-Silvestre extension (see \cite{CafSil}).  
For the same reason, our result can be also considered as a generalization to any dimension grater than 2 of the results in \cite{FalFelFerNiang}, where the authors analyze a mixed bondary value problem in dimension 2 without any assumption of star-shapedness on the portion on which an homogeneous Dirichlet boundary condition is prescribed. 

As for the study of the strong unique continuation property at boundary points, we mention among others \cite{Dip} and \cite{FelFer}, where a strong unique continuation type result at corners under a zero Dirichlet boundary condition and a non-homogeneous Neumann boundary condition respectively is established via a classification of blow-up limit profiles.  
For a more detailed overview on papers in literature about this topic in a local setting, we strongly suggest the reader to consult \cite[Section 1]{DelFel} and references therein. With regars to the study of the strong unique continuation property in a non local framework, it is worth citing \cite{DelFelSic}, \cite{DelFelVit} and \cite{ui}. 

More precisely, we concentrate our investigation on the problem 
\begin{equation}\label{problocaliz}
\left\{\begin{aligned}
-\Delta u&=fu &&\text{in}\ B_{\bar{r}}\setminus\Gamma, \\
u&=0 && \text{on}\ \Gamma\cap B_{\bar{r}},
\end{aligned}\right. 
\end{equation}
with $\Gamma$ as in \eqref{gamma} only satisfying \eqref{assumtpionssug}; moreover, in light of Remark \ref{esempi}, in order to preserve assumptions on the potential $f$ even after applying the diffeomorphism, we reasonably assume either that
\begin{equation}\label{1}
f(x)= O(|x|^{-2+\delta}) \ \text{as $|x|\to 0^+$ for some $\delta>0$},\tag{a1}
\end{equation}  
or that
\begin{equation}\label{2}
f\in W^{1,p}(B_{\bar{r}}) \ \text{for some $p>(N+1)/2$},\tag{a2}
\end{equation}
in place of (i) and (ii) above, considered in \cite{DelFel}. 
In order to state the main result we introduce an eigenvalue problem on the unit $N$-dimensional sphere $\mathbb{S}^N:= \partial B_1$ with a homogeneous Dirichlet boundary condition on the cut $\Theta:= \tilde{\Gamma}\cap \partial B_1$ with 
\begin{equation}\label{gammatilde}
\tilde\Gamma= \{(x',x_N)\in \mathbb{R}^{N-1}\times\mathbb{R}:\,x_N\geq 0\},
\end{equation}
given by 
\begin{equation}\label{problagliautovalori}
\left\{\begin{aligned}
-\Delta_{\mathbb{S}^N} \Psi&=\mu\Psi &&\text{in}\ \mathbb{S}^N\setminus \Theta, \\
\Psi&=0 && \text{on}\ \Theta.
\end{aligned}\right. 
\end{equation}
Letting 
\begin{equation}\label{Htheta}
\mathcal{H}_\Theta:= \{\psi\in H^1(\mathbb{S}^N): \psi=0\ \text{on $\Theta$ in a trace sense}\},
\end{equation}
we say that $\mu\in \mathbb{R}$ is an eigenvalue of \eqref{problagliautovalori} if there exists a non-trivial $\Psi\in \mathcal{H}_\Theta$ such that 
\begin{equation}\label{fordebpsi}
\int_{\mathbb{S}^N}\nabla _{\mathbb{S}^N}\Psi\cdot \nabla _{\mathbb{S}^N}\phi\,ds=\mu \int_{\mathbb{S}^N} \Psi\phi\,ds\quad \text{for all $\phi\in C^\infty_c(\mathbb{S}^N\setminus\Theta)$};
\end{equation}
such a function $\Psi$ is called an eigenfunction of problem \eqref{problagliautovalori} associated with $\mu$. 
Classical spectral theory ensures the existence of an increasing and diverging sequence of real eigenvalues $\{\mu_k\}_{k\geq 1}$ of problem \eqref{problagliautovalori} (counted with their finite multiplicities) which are explicitly given by the sequence
\begin{equation}\label{autovespliciti}
\left\{\frac{k(k+2N+2)}{4}\right\}_{ k\geq 1}
\end{equation}
(see \cite[Appendix A]{DelFel} for the proof). 

The main result of the present paper is the following: we substantially show that the asymptotics performed in \cite[Theorem 1.1]{DelFel} is still valid in the absence of the star-shapedness condition \eqref{assumincriminata}.  
\begin{teo}\label{mainresult}
Let $u\in H^1(B_{\bar{r}})\setminus\{0\}$ be a weak solution to \eqref{problocaliz}. Then there exist $k_0\geq 1$ and an eigenfunction $\Psi$ of problem \eqref{problagliautovalori} associated with the eigenvalue $\mu_{k_0}$ such that 
\begin{equation*}
\frac{u(\lambda x)}{\lambda^{\frac{k_0}{2}}}\to |x|^{\frac{k_0}{2}}\Psi\left(\frac{x}{|x|}\right)\quad\text{in $H^1(B_1)$ as $\lambda\to 0^+$}. 
\end{equation*}
\end{teo}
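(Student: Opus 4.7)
The plan is to reduce the problem to one on the flat crack $\tilde\Gamma$ via a local diffeomorphism that straightens $\partial\Gamma$, at the cost of introducing a variable-coefficient operator, and then adapt the monotonicity--blow-up scheme of \cite{DelFel} by handling the resulting matrix along the lines of \cite{DelFelVit}.

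First, following \cite{DelFelSic,DelFelVit}, I would introduce a local $C^2$ diffeomorphism $F$ of a neighborhood of $0$ in $\mathbb{R}^{N+1}$ with $F(0)=0$, $DF(0)=I_{N+1}$, and $F(\Gamma)=\tilde\Gamma$ locally. Setting $v:=u\circ F^{-1}$ and pulling back the potential, a standard change-of-variables computation shows that $v$ weakly solves
$$\begin{cases}
-\dive(A(y)\nabla v)=\tilde f(y)\,v&\text{in }U\setminus\tilde\Gamma,\\
v=0&\text{on }\tilde\Gamma\cap U,
\end{cases}$$
where $U$ is a neighborhood of $0$, $A$ is a symmetric, $C^1$, uniformly elliptic matrix with $A(0)=I_{N+1}$, and $\tilde f$ still satisfies \eqref{1} or \eqref{2} with possibly shrunken radii. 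Because $\tilde\Gamma^c$ is a half-space, it is trivially star-shaped with respect to $0$, so the approximation of $B_r\setminus\tilde\Gamma$ by smooth star-shaped subdomains employed in \cite{DelFel} can be performed without any assumption on $g$.

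The second step is to derive a Pohozaev-type identity for the matrix operator. Following \cite[Section 4]{DelFelVit}, I would test the equation against $A(y)y\cdot\nabla v$ (rather than against $y\cdot\nabla v$) on the approximating domains, pass to the limit, and obtain an inequality involving the modified height and energy
$$H(r)=\frac{1}{r^N}\int_{\partial B_r}(A\nu\cdot\nu)\,v^2\,ds,\qquad D(r)=\frac{1}{r^{N-1}}\biggl(\int_{B_r}A\nabla v\cdot\nabla v\,dy-\int_{B_r}\tilde f\,v^2\,dy\biggr),$$
and hence on the Almgren frequency $\mathcal{N}(r)=D(r)/H(r)$. Combined with the integrability built into \eqref{1}--\eqref{2}, this should yield $(\log\mathcal{N}(r))'\geq -h(r)$ with $h\in L^1(0,\bar{r})$, so that $\gamma:=\lim_{r\to 0^+}\mathcal{N}(r)$ exists and is finite. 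The key points are that $A-I_{N+1}=O(|y|)$ and $\nabla A\in L^\infty$ produce only integrable remainders, and that the boundary terms on $\tilde\Gamma\cap B_r$ have the right sign thanks to the homogeneous Dirichlet condition and the straightened geometry.

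Finally, a doubling estimate for $H$ combined with the monotonicity of $\mathcal{N}$ gives uniform $H^1(B_1)$-bounds on the blow-up family $v_\lambda(y)=v(\lambda y)/\sqrt{H(\lambda)}$. Any limit $\Phi$ is homogeneous of degree $\gamma$, harmonic in $B_1\setminus\tilde\Gamma$ (since $A(\lambda\cdot)\to I_{N+1}$ and the rescaled potential vanishes), and satisfies $\Phi=0$ on $\tilde\Gamma$; separation of variables, combined with the spectral description \eqref{autovespliciti}, forces $\gamma=k_0/2$ for some integer $k_0\geq 1$ and $\Phi(y)=|y|^{k_0/2}\Psi(y/|y|)$ with $\Psi$ an eigenfunction of \eqref{problagliautovalori} associated with $\mu_{k_0}$. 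Uniqueness of the blow-up is then obtained via the standard Fourier-decomposition argument along the eigenbasis of \eqref{problagliautovalori}, upgrading subsequential to full convergence as $\lambda\to 0^+$; undoing $F$ and using $DF(0)=I_{N+1}$ transfers the asymptotics back to $u$ and yields Theorem \ref{mainresult}. I expect the main difficulty to lie in controlling the extra terms generated by $A\neq I_{N+1}$ in the Pohozaev inequality: they must be shown to be of integrable lower order in $r$, both in the bulk of $B_r$ and on $\tilde\Gamma\cap B_r$, which crucially rests on $A(0)=I_{N+1}$ and the $C^1$ regularity of $A$.
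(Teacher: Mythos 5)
Your high-level plan — straighten the crack via a diffeomorphism, pass to a divergence-form equation with a variable matrix $A$, run the approximation/Pohozaev/Almgren/blow-up scheme of \cite{DelFel,DelFelVit}, and pull the asymptotics back through $F$ — matches the paper's strategy and is correct in outline. But it contains a genuine gap exactly where the paper's main new contribution sits.

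You dismiss the sign of the Pohozaev boundary term as automatic: ``the approximation of $B_r\setminus\tilde\Gamma$ by smooth star-shaped subdomains employed in \cite{DelFel} can be performed without any assumption on $g$'' and ``the boundary terms on $\tilde\Gamma\cap B_r$ have the right sign thanks to the homogeneous Dirichlet condition and the straightened geometry.'' This is not so. Once the operator carries the matrix $A$, the Rellich--Ne\u{c}as vector field is $\beta=Ax/\mu$ (not $x$), and the lateral boundary term over the smoothed boundary $\gamma_{r,n}$ of the approximating domain is proportional to $(A\nu\cdot\nu)(Ax\cdot\nu)\,|\partial U_n/\partial\nu|^2$; its sign is governed by $Ax\cdot\nu$, not $x\cdot\nu$. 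Ordinary star-shapedness of the approximating domains therefore does not suffice, and because $|\partial U_n/\partial\nu|$ has no a priori bound on $\gamma_{r,n}$, one cannot absorb a small negativity of $Ax\cdot\nu$ as an ``integrable remainder'' the way one can for the bulk terms. Proving $Ax\cdot\nu\geq 0$ on $\gamma_{r,n}$ is exactly Lemma \ref{lemtipostellatura}, the paper's key new lemma, and it rests on two nontrivial ingredients you do not address: the block structure \eqref{Aablocchi}--\eqref{Dablocchi} of $A$ (in particular that its entries are independent of $x_{N+1}$), and a re-engineering of the approximating functions $\tilde f_n$ in \eqref{fn} with a free parameter $\alpha>1$ chosen so that $C_2/C_1<\alpha$ in \eqref{C1}--\eqref{mindialpha}, which builds in the slack needed to compensate for $A\neq\mathrm{Id}_{N+1}$. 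Simply reusing the approximating domains from \cite{DelFel} would not yield the required inequality. Apart from this, your sketch of the monotonicity formula, doubling estimates, blow-up, spectral identification $\ell=k_0/2$, and the final transfer back through $F$ agrees with the paper's argument.
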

This result allows us to deduce that the vanishing order of non-trivial solutions to \eqref{problU} cannot exceed the limit of the Almgren function, which is shown to be equal to $k_0/2$ for some $k_0\geq 1$ (see Lemma \ref{ellelegataautovalore} below).
 Therefore, as a relevant consequence, we are able to establish that solutions to \eqref{problocaliz} impose the validity of a strong unique continuation property at $0$, as evidenced in the following corollary.
\begin{cor}
If $u\in H^1(B_{\bar{r}})$ is a weak solution to \eqref{problocaliz} such that $u(x)=O(|x|^k)$ as $|x|\to 0^+$ for every $k\in\mathbb{N}$, then necessarily $u\equiv 0$ in $B_{\bar{r}}$. 
\end{cor}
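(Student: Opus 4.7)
The plan is to deduce the corollary from Theorem~\ref{mainresult} by a direct contradiction argument: the main theorem pins down the vanishing order at $0$ of every nontrivial solution to the precise value $k_0/2$ for some integer $k_0\geq 1$, which is incompatible with the infinite-order vanishing assumed in the hypothesis. No new machinery beyond Theorem~\ref{mainresult} is needed; the corollary is essentially a standard consequence of the blow-up classification.

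Concretely, I would argue by contradiction. Assume $u \not\equiv 0$ in $B_{\bar r}$. Then Theorem~\ref{mainresult} furnishes an integer $k_0 \geq 1$ and a \emph{nontrivial} eigenfunction $\Psi \in \mathcal{H}_\Theta$ of \eqref{problagliautovalori} associated with $\mu_{k_0}$ such that
$$
u_\lambda(x) := \frac{u(\lambda x)}{\lambda^{k_0/2}} \longrightarrow |x|^{k_0/2}\,\Psi\!\left(\frac{x}{|x|}\right) \quad \text{in } H^1(B_1) \text{ as } \lambda \to 0^+,
$$
and in particular also in $L^2(B_1)$. On the other hand, the hypothesis $u(x) = O(|x|^k)$ for every $k\in\mathbb{N}$ — read in the natural pointwise a.e.\ sense, $|u(y)| \le C_k |y|^k$ on some ball $B_{r_k}$ — gives, after the change of variables $y=\lambda x$,
$$
\|u_\lambda\|_{L^2(B_1)}^2 = \lambda^{-k_0-(N+1)} \int_{B_\lambda} |u(y)|^2\,dy \;\le\; C_k'\,\lambda^{2k-k_0}
$$
for all sufficiently small $\lambda > 0$. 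Choosing any $k > k_0/2$ makes the right-hand side tend to $0$, so $u_\lambda \to 0$ in $L^2(B_1)$.

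By uniqueness of the $L^2$ limit this forces $|x|^{k_0/2}\Psi(x/|x|) \equiv 0$ in $B_1$, hence $\Psi \equiv 0$ on $\mathbb{S}^N$, contradicting the fact that $\Psi$ is a genuine eigenfunction. Therefore $u \equiv 0$ in $B_{\bar r}$, as claimed. The only mildly delicate point is fixing the correct interpretation of $u(x)=O(|x|^k)$ for an $H^1$ function; the argument above goes through equally well under the $L^2$-mean reading $\int_{B_r}|u|^2 = O(r^{N+1+2k})$, so this is harmless and all the substantive work has been carried out already in Theorem~\ref{mainresult}.
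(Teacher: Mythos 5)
Your proof is correct and takes essentially the same route the paper intends: the paper states the corollary as an immediate consequence of Theorem~\ref{mainresult} (with only the brief preamble that the vanishing order of nontrivial solutions equals $k_0/2$) and leaves the details unstated, which is exactly the contradiction argument you spell out. The change-of-variables estimate, the choice $k>k_0/2$, and the observation that the $L^2$-mean interpretation of $u(x)=O(|x|^k)$ is sufficient are all sound, so nothing is missing.
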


\subsection*{Organization of the paper}
The rest of the paper is organized as follows. 
In Section \ref{section1} we introduce the straightened problem and we construct a specific approximation argument consisting in the study of some boundary value problems on approximating domains enjoying good properties to obtain a Pohozaev-type inequality in Section \ref{section2}. In Section \ref{section3} we develop Almgren's monotonicity approach and finally in Section \ref{section4} we carry out the so-called blow-up analysis which gives additional information on finite vanishing order of non-trivial solutions at the origin. 

\section{Removal of star-shapeness condition and approximation argument} \label{section1}
\subsection{A straightening of crack's edge}\label{subdiffeo}
We briefly recall the construction and the main properties of the diffeomorphism introduced in \cite[Section 2]{DelFelVit}. 
There exists $F: \R^{N+1}\to \R^{N+1}$ of class $C^{1,1}$ such that $\mathrm{det Jac }F(0)\neq 0$ and $F(0)=0$.  Thus by the Inverse Function Theorem there exists $\tilde{r}>0$ small enough such that 
\begin{itemize}
\item[(i)] $F|_{B_{\tilde{r}}}: B_{\tilde{r}}\to F(B_{\tilde{r}})$ is invertible, namely $F$ is a local diffeomorphism;
\item [(ii)] $F(B_{\tilde{r}})\subset B_{\bar{r}}$.
\end{itemize}
In particular, letting $\tilde{\Gamma}$ as in \eqref{gammatilde}, we have that 
\begin{equation}\label{trasformazione}
\begin{split}
F(B_{\tilde{r}}\setminus\tilde{\Gamma})&= F(B_{\tilde{r}})\setminus\Gamma\subset B_{\bar{r}}\setminus\Gamma,\\
F(\tilde{\Gamma}\cap B_{\tilde{r}})&= \Gamma\cap F(B_{\tilde{r}})\subset \Gamma\cap B_{\bar{r}}.
\end{split} 
\end{equation}
Moreover it holds that
\begin{equation}\label{serveperlultimoteo}
F(x)=x+ O(|x|^2)\quad \text{and}\quad F^{-1}(x)=x+ O(|x|^2) \quad \text{as $|x|\to 0$},
\end{equation}
where $O(|x|^2)$ denotes a vector in $\R^{N+1}$ with all entries equal to $O(|x|^2)$ as $|x|\to 0$, and consequently
\begin{equation}\label{stimesuF}
\begin{split}
\mathrm{Jac}F(x)=\mathrm{Id}_{N+1}+ O(|x|)\quad \text{and}\quad \mathrm{Jac}F^{-1}(x)=\mathrm{Id}_{N+1}+ O(|x|) \quad \text{as $|x|\to 0$},
\end{split}
\end{equation} 
where $\mathrm{Jac}F$ denotes the Jacobian matrix of $F$ and it does not depend on $x_{N+1}$, $\mathrm{Id}_{N+1}$ is the identity matrix in $\mathbb{R}^{(N+1)^2}$, $O(|x|)$ here stands for a matrix in $\mathbb{R}^{(N+1)^2}$ having all elements being $O(|x|)$ as $|x|\to 0$. 
Additionally it holds that
\begin{equation}\label{determinante}
\mathrm{det Jac }F(x)=1+ O(|x'|^2)+O(x_N)\quad\text{as $|x'|\to 0$ and $x_N\to 0$}.
\end{equation}
In light of \eqref{trasformazione}, if $u$ is a weak solution to \eqref{problocaliz}, then $U:= u \circ F $ is a weak solution to 
\begin{equation}\label{problU}
\left\{\begin{aligned}
-\mathrm{div}(A(x)\nabla U(x))&=\tilde
f(x)U(x) &&\text{in}\ B_{\tilde{r}}\setminus\tilde\Gamma, \\
U&=0 && \text{on}\ \tilde\Gamma,
\end{aligned}\right. 
\end{equation}
where 
\begin{equation}\label{Aftilde}
A(x):=|\mathrm{det Jac }F(x)| \mathrm{Jac }F(x)^{-1}(\mathrm{Jac }F(x)^{-1})^T \quad \text{and}\quad \tilde{f}(x):= |\mathrm{det Jac }F(x)| f(F(x)).
\end{equation}
By the $C^{1,1}$-regularity of $F$ we have that 
\begin{equation}\label{Aelipschitz}
A\in C^{0,1}(B_{\tilde{r}},\mathbb{R}^{(N+1)^2});
\end{equation}
furthermore by \eqref{stimesuF}, we have that $\tilde{f}\in L^\infty_{\mathrm{loc}}(B_{\tilde{r}}\setminus\{0\})$, and in addition either
\begin{equation}\label{asssuftilde1}
\tilde{f}(x)=O(|x|^{-2+\delta}) \ \text{as $|x|\to 0^+ $ for some $\delta>0$},
\end{equation}
under assumption \eqref{1}, or
\begin{equation}\label{asssuftilde2}
\tilde f\in W^{1,p}(B_{\tilde{r}}) \ \text{for some $p>(N+1)/2$},
\end{equation}
under assumption \eqref{2}. 
We underline that by weak solution to \eqref{problU} we mean that the admissible functional space for $U$ is 
\begin{equation*}\label{lospaziodiU}
H^1_{\tilde{\Gamma}}(B_{\tilde{r}}):= \overline{C^\infty(\overline{B_{\tilde{r}}}\setminus\tilde{\Gamma})}^{\Vert \cdot\Vert_{H^1(B_{\tilde{r}})}},
\end{equation*}
namely the closure with respect to the $H^1$-norm of the space of all $C^\infty(\overline{B_{\tilde{r}}})$-functions vanishing in a neighbourhood of $\tilde{\Gamma}$, 
and that $U$ satisfies 
\begin{equation}\label{weaksolu}
\int _{B_{\tilde{r}}} A\nabla U\cdot \nabla \varphi\, dx= \int _{B_{\tilde{r}}}  \tilde{f} U\varphi \, dx \quad \text{for all $\varphi\in C^\infty_c(B_{\tilde{r}}\setminus \tilde{\Gamma})$}. 
\end{equation}
For future purposes, we highlight that $A$ can be written as the following block matrix
\begin{equation}\label{Aablocchi}
  A=A(x',x_N)=
  \left(
    \renewcommand{\arraystretch}{1.5}
    \begin{array}{c|c}
      D(x',x_N) & \underline{0} \\
      \hline
      \underline{0} & \mathrm{det Jac }F(x',x_N)
    \end{array}
  \right),
\end{equation}
where $\underline{0}$ is the null vector in $\mathbb{R}^N$ and 
\begin{equation}\label{Dablocchi}
  D(x',x_N)=
  \left(
    \renewcommand{\arraystretch}{1.5}
    \begin{array}{c|c}
      \mathrm{Id}_{N-1}+O(|x'|^2)+O(x_N) & O(x_N) \\
      \hline
      O(x_N) & 1+O(|x'|^2)+O(x_N)
    \end{array}
  \right),
\end{equation}
where $\mathrm{Id}_{N-1}$ denotes the identity matrix in $\mathbb{R}^{(N-1)^2}$, in the top left block $O(|x'|^2)$ and $O(x_N)$ denote two matrices in $\mathbb{R}^{(N-1)^2}$ with all elements being $O(|x'|^2)$ and $O(x_N)$ as $|x'|\to 0$ and $x_N\to 0$ respectively; in the top right block and in the lower left one, $O(x_N)$ stands for a vector in $\mathbb{R}^{(N-1)\times 1}$ and $\mathbb{R}^{1\times (N-1)}$ respectively, with all entries being $O(x_N)$ as $x_N\to 0$. 
Also, from \eqref{stimesuF}, \eqref{determinante} and \eqref{Aftilde}, we can deduce that 
\begin{equation}\label{stimadiA}
A(x)-\mathrm{Id}_{N+1}=O(|x|) \quad \text{as $|x|\to 0$}.
\end{equation}
Thus, without loss of generality, we can assume that  for every $x\in B_{\tilde{r}}$
\begin{equation}\label{Amaggioreidunmezzo}
A(x)y\cdot y\geq \frac{1}{2}|y|^2\quad \text{ for all $y\in \mathbb{R}^{N+1}$},
\end{equation}
and 
\begin{equation}\label{normadiA}
\Vert A(x)\Vert \leq 2, 
\end{equation}
where $\Vert A(x)\Vert$ denotes the norm of $A(x)$ interpreted as the linear  operator $y\in\mathbb{R}^{n+1}\mapsto A(x)y\in \mathbb{R}^{n+1} $. 
Now we define 
\begin{equation}\label{mu}
\mu(x):= \begin{cases}
(A x\cdot x)/|x|^2 &\text{if $x\in B_{\tilde{r}}\setminus\{0\}$},\\
1 &\text{if $x=0$},
\end{cases}
\end{equation}
and 
\begin{equation}\label{beta}
\beta(x):=\frac{Ax}{\mu(x)}.
\end{equation}
We observe that from \eqref{stimadiA} and \eqref{mu} we can conclude that 
\begin{equation}\label{stimadimu}
\mu(x)= 1+O(|x|) \quad \text{as $|x|\to 0$},
\end{equation}
and also that 
\begin{equation}\label{nablamu}
\nabla \mu(x) =O(1) \quad \text{as $|x|\to 0^+$}.
\end{equation}
In particular
$\mu $ is continuous on $B_{\tilde{r}}$ and, without loss of generality, we can assume that 
\begin{equation}\label{mumaggiore}
\mu(x) \geq \frac{1}{2} \quad \text{for every $x\in B_{\tilde{r}}$}.
\end{equation}
Furthermore by \eqref{beta}, \eqref{stimadiA} and \eqref{stimadimu}, we have that as $|x|\to 0$
\begin{equation}\label{lestimedibeta}
\begin{cases}
\beta(x)&= x+O(|x|^2)=O(|x|),\\
\mathrm{Jac}\beta(x)&= A(x)+O(|x|)=\mathrm{Id}_{N+1}+O(|x|),\\
\mathrm{div}\beta(x)&=N+1+O(|x|).
\end{cases}
\end{equation}
From this, again without loss of generality, we can deduce that for every $x\in B_{\tilde{r}}$ 
\begin{equation}\label{betaequasix}
|\beta(x)|\leq \mathrm{const}|x|,
\end{equation}
\begin{equation}\label{jaclimitata}
\Vert \mathrm{Jac}\beta(x)\Vert \leq \mathrm{const},
\end{equation}
for some $\mathrm{const}>0$ independent of $x$, and 
\begin{equation}\label{divbetalimitata}
|\mathrm{div}\beta(x)|\leq N+2.
\end{equation}
Also, using the notation $A=(a_{jk})_{j,k=1,\dots, N+1}$, we define for every for every $x\in B_{\tilde{r}}$ and $v_1,v_2\in\mathbb{R}^{N+1}$ 
\begin{equation*}
dA (x)v_1 v_2:= \left(\sum_{j,k=1}^{N+1}\frac{\partial a_{jk}(x)}{\partial x_1}v_j v_k, \dots,\sum_{j,k=1}^{N+1}\frac{\partial a_{jk}(x)}{\partial x_{N}}v_jv_k,0 \right)\in \mathbb{R}^{N+1}.
\end{equation*}   
By direct computations, one can easily check that for every $x\in B_{\tilde{r}}$ and $v_1,v_2,v_2'\in\mathbb{R}^{N+1}$
\begin{equation}\label{simmetriadA}
dA(x)v_1v_2= dA(x)v_2v_1,
\end{equation}
\begin{equation}\label{differenza}
dA(x)v_1v_2-dA(x)v_1v_2'=dA(x)v_1(v_2-v_2'),
\end{equation}
and 
\begin{equation}\label{leproprietadidA}
|dA(x)v_1v_2|\leq \mathrm{const} |v_1||v_2|,
\end{equation}
using \eqref{Aelipschitz}, for some $\mathrm{const}>0$ independent of $x$, $v_1$ and $v_2$.

\subsection{Some crucial inequalities}
We remind the following 
Hardy-type inequality with boundary terms (see \cite[Theorem 1.1]{Wang})
\begin{equation}\label{hardy}
\left(\frac{N-1}{2}\right)^2\int_{B_r}\frac{|U(x)|^2}{|x|^2}\, dx\leq \int_{B_r} |\nabla U|^2\, dx +\frac{N-1}{2r}\int_{\partial B_r} U^2\, ds,
\end{equation}
which will be often invoked throughout the whole paper. 

Now we prove an adapted version of \cite[Lemma 2.1]{DelFel}, due to the presence of the matrix $A$. 

\begin{lem}\label{dis} 
Let $f$ satisfy either \eqref{asssuftilde1} or \eqref{asssuftilde2}. There exists $r_0\in (0,\tilde{r})$ such that every $U\in H^1(B_r)$ with $r\in (0,r_0]$ satisfies the following inequality 
\begin{equation}\label{unificata}
\int_{B_r}A\nabla U\cdot\nabla U \, dx -\int_{B_r} |\tilde{f}| U^2\, dx + C r^{-1+\varepsilon} \int_{\partial B_r} \mu U^2\, ds \geq \frac{1}{4} \int_{B_r} |\nabla U|^2\, dx,
\end{equation}
where $C>0$ is a positive constant depending only on $N$ under assumption \eqref{asssuftilde1} and depending on $N$, $p$ and $\Vert \tilde{f}\Vert_{L^p(B_{\tilde{r}})}$ under assumption \eqref{asssuftilde2}, and 
\begin{equation}\label{esponenteps}
\varepsilon = \begin{cases}
\delta&\text{under assumption \eqref{asssuftilde1}},\\
\frac{2p-N-1}{p}&\text{under assumption \eqref{asssuftilde2}}.
\end{cases}
\end{equation}
Moreover it holds that 
\begin{equation}\label{serviraancora}
Cr^\varepsilon<\frac{N-1}{4}\quad \text{for every $r\in (0,r_0]$}.
\end{equation} 
\end{lem}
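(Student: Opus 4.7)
My plan is to derive \eqref{unificata} by combining three elementary ingredients: (a) the ellipticity bound \eqref{Amaggioreidunmezzo} on $A$, which produces a full $\int_{B_r}|\nabla U|^2\,dx$ from $\int_{B_r}A\nabla U\cdot\nabla U\,dx$ up to a factor $1/2$; (b) a smallness estimate of the form $r^\varepsilon$ on the coefficient of the potential term $\int_{B_r}|\tilde f|U^2\,dx$, obtained differently under the two hypotheses; and (c) the pointwise lower bound $\mu\geq 1/2$ from \eqref{mumaggiore}, used to replace $U^2$ by $\mu U^2$ in the boundary term produced by step (b). The proof is then concluded by choosing $r_0$ small enough to absorb the resulting $Kr^\varepsilon$ into the gain $1/2$ coming from ellipticity. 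Compared with \cite[Lemma 2.1]{DelFel}, the only real changes are step (a), which was trivial there since the operator was $-\Delta$, and step (c).

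For step (b) under assumption \eqref{asssuftilde1}, I would exploit $|\tilde f(x)|\leq C_1|x|^{-2+\delta}\leq C_1 r^\delta|x|^{-2}$ on $B_r$ and invoke the Hardy inequality \eqref{hardy}; under \eqref{asssuftilde2} the analogous bound with $r^{(2p-N-1)/p}$ in place of $r^\delta$ is exactly the one spelled out in Remark \ref{esempi}, resting on H\"older and \eqref{stimadiL2star}. In both cases I end up with
\begin{equation*}
\int_{B_r}|\tilde f|U^2\,dx\leq K r^{\varepsilon}\int_{B_r}|\nabla U|^2\,dx+\tfrac{(N-1)K}{2}r^{-1+\varepsilon}\int_{\partial B_r}U^2\,ds
\end{equation*}
for a constant $K>0$ of the dependence announced in the statement. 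Using \eqref{mumaggiore} in the form $U^2\leq 2\mu U^2$ on $\partial B_r$ to rewrite the boundary integral, and combining with the ellipticity bound of step (a), I obtain
\begin{equation*}
\int_{B_r}A\nabla U\cdot\nabla U\,dx-\int_{B_r}|\tilde f|U^2\,dx+(N-1)K\,r^{-1+\varepsilon}\int_{\partial B_r}\mu U^2\,ds\geq\left(\tfrac12-Kr^\varepsilon\right)\int_{B_r}|\nabla U|^2\,dx.
\end{equation*}

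Setting $C:=(N-1)K$, condition \eqref{serviraancora} rewrites as $Kr^\varepsilon\leq 1/4$, which I enforce by taking $r_0\in(0,\tilde r)$ small enough; with this choice the right-hand side of the previous display is bounded below by $\tfrac14\int_{B_r}|\nabla U|^2\,dx$, yielding \eqref{unificata}. No genuine obstacle appears in the argument; the only point requiring some care is that the \emph{same} constant $C$ must govern both the coefficient of the $\mu U^2$ boundary term in \eqref{unificata} and the smallness threshold \eqref{serviraancora}, so the losses introduced in steps (a), (b), and (c) have to be tracked and collected consistently.
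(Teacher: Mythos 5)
Your proposal is correct and follows essentially the same route as the paper: ellipticity \eqref{Amaggioreidunmezzo} to extract $\frac{1}{2}\int_{B_r}|\nabla U|^2$, the Hardy inequality \eqref{hardy} under \eqref{asssuftilde1} (respectively H\"older together with \eqref{stimadiL2star} under \eqref{asssuftilde2}) to bound the potential term by $Kr^{\varepsilon}\int_{B_r}|\nabla U|^2+\frac{(N-1)K}{2}r^{-1+\varepsilon}\int_{\partial B_r}U^2$, the bound $\mu\geq 1/2$ from \eqref{mumaggiore} to convert the boundary integrand into $\mu U^2$, and finally a choice of $r_0$ making $Kr^\varepsilon\leq 1/4$. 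The only cosmetic difference is that the paper inserts the $\mu$-replacement inside the Hardy/H\"older chain rather than isolating it as a separate step, and it fixes the hidden constant in $O(|x|^{-2+\delta})$ to be $1$ so that $C$ depends only on $N$ under \eqref{asssuftilde1}; the constants and the threshold condition \eqref{serviraancora} come out consistently in both write-ups.
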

\begin{proof}
Let $U\in H^1(B_r)$ with $r\in (0,\tilde{r})$ to be taken gradually smaller throughout the proof according to the needs. 

We start by proving \eqref{unificata} under assumption \eqref{asssuftilde1}. Using \eqref{hardy} and \eqref{mumaggiore}, we can estimate the second term on the left-hand side of \eqref{unificata} as follows  
\begin{equation*}\label{stimasecterm}
\int_{B_r}|\tilde{f}| U^2 \, dx\leq r^\delta\int_{B_r}\frac{|U(x)|^2}{|x|^2}\,dx\leq \left(\frac{2}{N-1}\right)^2r^\delta \int_{B_r}|\nabla U|^2\, dx + \frac{4}{N-1}r^{-1+\delta} \int_{\partial B_r} \mu U^2\,ds. 
\end{equation*}
From this, choosing $r_0\in (0,\tilde{r})$ sufficiently small in such a way that 
\begin{equation}\label{rdelta}
r_0^\delta< \frac{(N-1)^2}{16},
\end{equation}
we have that for every $r\in (0,r_0]$
\begin{equation}\label{stimasecterm2}
\int_{B_r}|\tilde{f}| U^2 \, dx\leq \frac{1}{4} \int_{B_r}|\nabla U|^2\, dx + \frac{4}{N-1}r^{-1+\delta} \int_{\partial B_r} \mu U^2\,ds.
\end{equation}
Moreover, by \eqref{Amaggioreidunmezzo} we have that for every $r\in (0,r_0]$
\begin{equation}\label{stimaprimoterm}
\int_{B_r} A\nabla U \cdot \nabla U \,dx \geq \frac{1}{2}\int_{B_r} |\nabla U|^2\, dx.
\end{equation}  
Thus, combining \eqref{stimasecterm2} and \eqref{stimaprimoterm}, we easily infer \eqref{unificata} under assumption \eqref{asssuftilde1} for every $U\in H^1(B_r)$ such that $r\in (0,r_0]$, with $C:= 4/(N-1)$ and $\varepsilon=\delta$. In particular \eqref{serviraancora} easily follows from \eqref{rdelta}.

Now we turn to prove the validity of \eqref{unificata} under assumption \eqref{asssuftilde2}. To this aim, being $S_{N,p}$ as in \eqref{SNp}, thanks to the H\"{o}lder inequality, \eqref{stimadiL2star} and \eqref{mumaggiore}, we have that
\begin{equation}\label{eq2}
\begin{split}
\int_{B_r}|\tilde{f}| U^2 \, dx&\leq S_{N,p}\, r ^{\frac{2p-N-1}{p}}\left(\int_{B_{\tilde{r}}}|\tilde{f}|^p dx\right)^{\frac{1}{p}}\left(\int_{B_{r}}|U(x)|^{2^\ast} dx\right)^{\frac{2}{2^\ast}}\\
&\leq \,S_{N,p}C_{N,p} \Vert \tilde{f}\Vert_{L^p(B_{\tilde{r}})} r ^{\frac{2p-N-1}{p}} \left( \int_{B_r}|\nabla U|^2\, dx +  \frac{N-1}{2r}\int_{\partial B_r}U^2\,ds\right)\\
&\leq \frac{1}{4}\int_{B_r}|\nabla U|^2\, dx +  (N-1)S_{N,p}C_{N,p} \Vert \tilde{f}\Vert_{L^p(B_{\tilde{r}})} \, r ^{-1+\frac{2p-N-1}{p}}\int_{\partial B_r}\mu U^2\,ds,\\
\end{split}
\end{equation}
for every $r\in (0,r_0]$ with $r_0\in (0,\tilde{r})$ such that 
\begin{equation}\label{r0p}
r_0^{\frac{2p-N-1}{p}}< (4S_{N,p}C_{N,p} \Vert \tilde{f}\Vert_{L^p(B_{\tilde{r}})} )^{-1}.
\end{equation} 
Thus, putting together \eqref{stimaprimoterm} and \eqref{eq2}, we get \eqref{unificata} under assumption \eqref{asssuftilde2} for every $U\in H^1(B_r)$ such that $r\in (0,r_0]$, with $C:= (N-1)S_{N,p}C_{N,p} \Vert \tilde{f}\Vert_{L^p(B_{\tilde{r}})}$ and $\varepsilon=(2p-N-1)/p$. In particular \eqref{serviraancora} immediately follows from \eqref{r0p}. 
\end{proof}
\subsection{Approximation argument}
We now really dive into the focal part of our dissertation: it will turn out that removing the star-shapedness condition on the complement of the crack (that is \emph{downstairs}, since $\Gamma^c$ lies on the hyperplane $\{x_{N+1}=0\}$) has the price of making us able to prove an ``almost" star-shapedness (see \eqref{tipostellatura} below) for the approximating domains (that is \emph{upstairs}), due to the presence of the matrix $A$. 
 
We start the construction of the approximating domains by letting $\eta\in C^\infty([0,+\infty))$  be such that 
$0\leq \eta\leq 1$, $\eta'\leq 0$ and 
\begin{equation}\label{eta}
\eta(t) = \begin{cases}
1&\text{if }t\leq 1/2,\\[5pt]
0&\text{if } t\geq 1.
\end{cases}
\end{equation}
Then we fix any real $\alpha>1$ and we introduce $f:[0,+\infty)\to\mathbb{R}$ such that 
\begin{equation}\label{f}
f(t):= \eta(t)+(1-\eta(t))t^{1/\alpha}\quad \text{for every $t\geq 0$.}
\end{equation} 
It holds that $f\in C^\infty([0,+\infty))$ and 
\begin{equation}\label{propertyf}
f(t)-\alpha t f'(t)\geq 0 \quad \text{for every $t\geq 0$.}
\end{equation}
Accordingly, we define a sequence of smooth functions given by 
\begin{equation}\label{fn}
f_n(t):=f(nt)n^{-1/2\alpha} \quad \text{for every $n\geq 1$ and $t\geq 0$,}
\end{equation}
which inherits \eqref{propertyf}, namely we have that
\begin{equation}\label{propertyfn}
f_n(t)-\alpha t f'_n(t)\geq 0 \quad \text{for every $t\geq 0$}.
\end{equation}
From \eqref{eta}, \eqref{f} and \eqref{fn}, we can deduce that  
\begin{equation}\label{fn0}
f_n(0) = n^{-1/2\alpha}. 
\end{equation}
In order to suitably define the approximating domains, we introduce the following sequence of functions 
\begin{equation*}\label{ftilden}
\tilde{f}_n(t) := f_n(|t|) \quad \text{for every $n\geq 1$ and $t\in \mathbb{R}$},
\end{equation*} 
which, thanks to \eqref{propertyfn}, satisfies 
\begin{equation}\label{propertyftilden}
\tilde{f}_n(t)-\alpha t \tilde{f}'_n(t)\geq 0 \quad \text{for every $t\in \mathbb{R}$}.
\end{equation} 
Thus, for every $r\in (0,r_0]$ and $n\in\mathbb{N}\setminus \{0\}$, let
\begin{equation*}
\mathcal{B}_{r,n}:= B_{r} \cap  \{(x',x_N,x_{N+1})\in \mathbb{R}^{N-1}\times \mathbb{R}\times \mathbb{R}: x_N<\tilde{f}_n(x_{N+1})\},
\end{equation*}
see Figure \ref{fig:approx-domains} below. 
The topological boundary of $\mathcal{B}_{r,n}$ can be written as follows 
\begin{equation*}
\partial \mathcal{B}_{r,n} = \overline{\mathcal{S}_{r,n} \cup \gamma_{r,n}}
\end{equation*} 
where
\begin{equation*}\label{Stilden}
\mathcal{S}_{r,n}:= \partial B_{r}\cap \{(x',x_N,x_{N+1})\in \mathbb{R}^{N-1}\times \mathbb{R}\times \mathbb{R}: x_N<\tilde{f}_n(x_{N+1})\}
\end{equation*}
and
\begin{equation}\label{gamman}
\gamma_{r,n}:= B_{r} \cap \{(x',x_N,x_{N+1})\in \mathbb{R}^{N-1}\times \mathbb{R}\times \mathbb{R}: x_N=\tilde{f}_n(x_{N+1})\}.
\end{equation}
The result in the following lemma 
will allow us to get rid of a boundary term over $\gamma_{r,n}$ in the Pohozaev identity, giving rise to a Pohozaev inequality.
\begin{lem}\label{lemtipostellatura}
For all $r\in (0,r_0]$ there exists $\bar{n}=\bar{n}(r)\in\mathbb{N}\setminus\{0\}$ sufficiently large such that for all $n\geq \bar{n}$ 
\begin{equation}\label{tipostellatura}
Ax\cdot\nu(x)\geq 0\quad \text{for every $x\in \gamma_{r,n}$},
\end{equation}
where $\nu(x)$ denotes the unit outward normal vector at $x\in \partial \mathcal{B} _{r,n}$. 
\end{lem}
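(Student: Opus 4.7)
The plan is to write $\gamma_{r,n}$ as the graph $x_N=\tilde f_n(x_{N+1})$, write the outward unit normal explicitly, compute $Ax\cdot\nu$ using the block structure \eqref{Aablocchi}--\eqref{Dablocchi} of $A$, and finally absorb the matrix perturbation $A-\mathrm{Id}$ into the strict gap $1-1/\alpha>0$ provided by the Euler-type property \eqref{propertyftilden} of $\tilde f_n$.

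First, since $\mathcal B_{r,n}=\{x_N<\tilde f_n(x_{N+1})\}\cap B_r$, the outward unit normal at a point $x=(x',x_N,x_{N+1})\in\gamma_{r,n}$ is
\[
\nu(x)=\frac{1}{\sqrt{1+|\tilde f_n'(x_{N+1})|^2}}\,\bigl(0,\dots,0,\,1,\,-\tilde f_n'(x_{N+1})\bigr),
\]
with the $1$ in position $N$. Only the $N$-th and $(N{+}1)$-th entries of $Ax$ contribute to $Ax\cdot\nu$. Using \eqref{Dablocchi} (the last row of $D$ being $(O(x_N),\,1+O(|x'|^2)+O(x_N))$) and \eqref{determinante}, one checks that as $|x|\to 0$
\[
(Ax)_N=x_N\bigl(1+O(|x|)\bigr),\qquad (Ax)_{N+1}=x_{N+1}\bigl(1+O(|x|)\bigr),
\]
and hence
\[
\sqrt{1+|\tilde f_n'(x_{N+1})|^2}\;Ax\cdot\nu(x)=x_N\bigl(1+O(|x|)\bigr)-\tilde f_n'(x_{N+1})\,x_{N+1}\bigl(1+O(|x|)\bigr).
\]

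Second, since $\tilde f_n(t)=f_n(|t|)$ is even, $\tilde f_n'(x_{N+1})\,x_{N+1}=|x_{N+1}|\,f_n'(|x_{N+1}|)\ge 0$. On $\gamma_{r,n}$ we have $x_N=\tilde f_n(x_{N+1})\ge 0$, so \eqref{propertyftilden} yields
\[
0\le \tilde f_n'(x_{N+1})\,x_{N+1}\le \frac{\tilde f_n(x_{N+1})}{\alpha}=\frac{x_N}{\alpha}.
\]
Since $1+O(|x|)$ is positive for $|x|$ sufficiently small, substituting into the previous identity gives
\[
\sqrt{1+|\tilde f_n'(x_{N+1})|^2}\;Ax\cdot\nu(x)\ge x_N\left[\Bigl(1-\tfrac{1}{\alpha}\Bigr)+O(|x|)\right].
\]
Since $\alpha>1$, shrinking $r_0$ if necessary so that the $O(|x|)$ term is bounded in absolute value by $(1-1/\alpha)/2$ throughout $B_{r_0}$ (which is a choice that depends only on $A$, not on $n$), the bracket is strictly positive and $x_N\ge 0$ concludes $Ax\cdot\nu(x)\ge 0$. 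The index $\bar n=\bar n(r)$ enters only to ensure that for every $n\ge \bar n$, every point of $\gamma_{r,n}$ actually falls within the range where the $O(|x|)$ quantitative estimate above has been made effective.

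The main obstacle is precisely the presence of the non-identity matrix $A$: in the star-shaped setting of \cite{DelFel} it suffices to verify $x\cdot\nu\ge 0$ directly from the geometry of the approximating hypersurfaces, whereas here the deviation of $A$ from $\mathrm{Id}_{N+1}$ produces a linear-in-$|x|$ perturbation that cannot be ignored. The key observation that makes the argument work is that the construction \eqref{f}--\eqref{fn} picks the exponent $\alpha$ strictly greater than $1$ exactly in order to leave a \emph{quantitative} gap $(1-1/\alpha)$ in \eqref{propertyftilden}, large enough to swallow the $O(|x|)$ correction coming from $A$ once we sit in a sufficiently small ball.
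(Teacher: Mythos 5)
Your overall strategy matches the paper's exactly: exploit the block structure \eqref{Aablocchi}--\eqref{Dablocchi} of $A$, write out $\nu$ and the graph relation $x_N=\tilde f_n(x_{N+1})$, and absorb the $O(|x|)$ perturbation of $A$ into the strict gap $1-1/\alpha>0$ supplied by the Euler-type inequality \eqref{propertyftilden}. Your computation of the two normal-direction components of $Ax$ and the resulting identity with two separate $1+O(|x|)$ factors is correct and in fact slightly cleaner than \eqref{prodottoscalare}, which writes a single common factor.

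There is, however, a genuine gap in the middle of your argument: you assert that
$\tilde f_n'(x_{N+1})\,x_{N+1}=|x_{N+1}|\,f_n'(|x_{N+1}|)\ge 0$.
The identity is correct, but the nonnegativity is not: $f_n$ is \emph{not} monotone. Indeed $f(1/2)=f(1)=1$ while $f(t)=\eta(t)+(1-\eta(t))t^{1/\alpha}<1$ for $t\in(1/2,1)$, so $f'<0$ somewhere in $(1/2,1)$ and hence $f_n'$ can be negative. Inequality \eqref{propertyftilden} gives you only the \emph{upper} bound $x_{N+1}\tilde f_n'(x_{N+1})\le \tilde f_n(x_{N+1})/\alpha$; it does not give the lower bound $0$. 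The paper avoids this by splitting into the two cases according to the sign of $x_{N+1}\tilde f_n'(x_{N+1})$: when it is nonnegative one uses \eqref{propertyftilden} together with $C_2/C_1<\alpha$ as you do; when it is negative the subtracted term is actually additive and $Ax\cdot\nu\ge C_1 x_N=C_1\tilde f_n(x_{N+1})\ge 0$ trivially. Your final lower bound $x_N\bigl[(1-1/\alpha)+O(|x|)\bigr]$ still happens to hold in the omitted case (since there the expression is even larger), so your conclusion is correct, but the justification as written asserts a false inequality and does not address the other case.

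A minor further inaccuracy: you say $\bar n(r)$ is introduced so that points of $\gamma_{r,n}$ fall within the range where the $O(|x|)$ estimate is effective. That estimate depends only on $A$ and $r_0$, not on $n$; since $r\le r_0$ it already holds on all of $B_r\supset\gamma_{r,n}$. In the paper $\bar n(r)$ is chosen so that $\bar n^{1/2\alpha}>1/r$, which by \eqref{fn0} guarantees that $\gamma_{r,n}$ is non-empty (equivalently $\mathcal B_{r,n}\subsetneq B_r$); the quantitative $O(|x|)$ control is handled separately, by possibly shrinking $r_0$.
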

\begin{figure}[ht]
\begin{tikzpicture}[line cap=round,line join=round,>=triangle 45, scale=2]
\draw [->] (-1.4,0)-- (1.4,0) node [right] {$x_N$};
\draw [->] (0,-1.4)-- (0,1.4) node [right] {$x_{N+1}$};
\draw  (-1,0) to [out=90, in=180] (0,1) to [out=0, in=120] (0.866,0.5) to [out=238, in=20] (0.43,0.15) to [out=200, in=90]  (0.33,0.076) to [out=270, in=90] (0.35,0.040) to [out=270, in=90] (0.35,0) -- (-1,0);
\draw  (-1,0) to [out=270, in=180] (0,-1)  to [out=0, in=240] (0.866,-0.5) to [out=122, in=340] (0.43,-0.15) to [out=340, in=270]  (0.33,-0.076) to [out=90, in=270] (0.35,-0.040) to [out=90, in=270] (0.35,0) -- (-1,0); 

\draw[color=black] (0.73,-0.1008) node {$n^{-1/8}$};
\draw[color=black] (-0.65,0.5) node {$\mathcal{B}_{r,n}$};		\draw[color=black] (0.866,0.2) node {$\mathcal{\gamma}_{r,n}$};	
\end{tikzpicture}
   \caption{A representation of the set $\mathcal{B}_{r,n}$ when $x'=0$ and $\alpha=1/4$.}
\label{fig:approx-domains}
\end{figure}
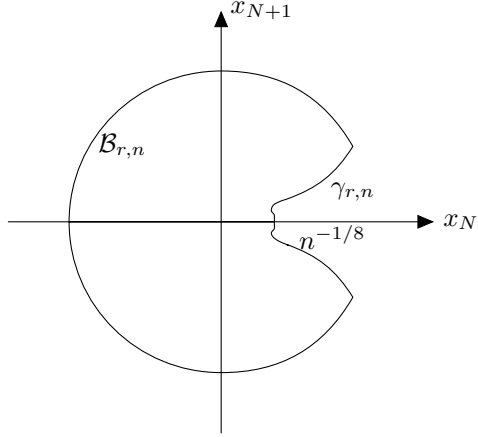
\begin{proof}
Let $r\in (0,r_0]$. We choose $\bar{n}\in\mathbb{N}\setminus \{0\}$ sufficiently large so that 
\begin{equation*}
\bar{n}^{1/2\alpha}>\frac{1}{r},
\end{equation*}
and we consider $n\geq \bar{n }$ in order to make sure that $\gamma_{r,n}$ is non-empty and consequently $\mathcal{B}_{r,n}\subsetneq B_r$, thanks to \eqref{fn0} (this is evident in Figure \ref{fig:approx-domains}).
We immediately observe that if $x=(x',x_N,x_{N+1})\in\gamma_{r,n}$, then by definition of $\gamma_{r,n}$ (given in \eqref{gamman})
\begin{equation}\label{def2}
x_N= \tilde{f}_n(x_{N+1})
\end{equation}
and the outward unit normal vector to $\partial \mathcal{B}_{r,n}$ at $x$ is given by 
\begin{equation*}
\nu(x)=\frac{(\underline{0},1, -\tilde{f}'_n(x_{N+1}) )}{\sqrt{1+[\tilde{f}'_n(x_{N+1})]^2}}. 
\end{equation*}
Combining this with \eqref{Aablocchi}, \eqref{Dablocchi} and \eqref{determinante}, we obtain that 
\begin{equation}\label{prodottoscalare}
\sqrt{1+[\tilde{f}'_n(x_{N+1})]^2} [Ax\cdot\nu(x)]=(1+O(|x'|)+O(x_N))(x_N-x_{N+1}\tilde{f}'_n(x_{N+1})).
\end{equation} 
At this point we notice that possibly choosing $r_0$ smaller from the beginning, for every $x\in B_{r_0}$
\begin{equation}\label{C1}
C_1\leq 1+O(|x'|)+O(x_N) \leq C_2,
\end{equation}
for some two positive constant $C_1<1$ and $C_2>1$ such that 
\begin{equation}\label{mindialpha}
\frac{C_2}{C_1}<\alpha;
\end{equation}
so in particular \eqref{C1} holds for every $x\in \gamma_{r,n}$.
Now if $x_{N+1}$ and $\tilde{f}'_n(x_{N+1})$ have the same sign, we put together \eqref{prodottoscalare} and \eqref{C1} to find that  
\begin{equation*}
\begin{split}
\sqrt{1+[\tilde{f}'_n(x_{N+1})]^2} [Ax\cdot\nu(x)]
&\geq C_1x_N - C_2 x_{N+1} \tilde{f}'_n(x_{N+1}) \\
&\geq C_1 (\tilde{f}_n(x_{N+1})- \alpha x_{N+1} \tilde{f}'_n(x_{N+1}))\geq 0,
\end{split}
\end{equation*}
using in addition \eqref{def2}, \eqref{mindialpha} and \eqref{propertyftilden}.
Instead, if $x_{N+1}$ and $\tilde{f}'_n(x_{N+1})$ have opposite signs,  
we simply take advantage of the first inequality in  \eqref{C1}, which in turn also implies that $(1+O(|x'|)+O(x_N))x_{N+1} \tilde{f}'_n(x_{N+1})\leq 0$, to conclude that 
\begin{equation*}
\begin{split}
\sqrt{1+[\tilde{f}'_n(x_{N+1})]^2} [Ax\cdot\nu(x)]\geq C_1x_N= C_1\tilde{f}_n(x_{N+1}) \geq 0,
\end{split}
\end{equation*}
as a consequence of \eqref{def2} and by construction of $\tilde{f}_n$. 
\end{proof}

From now on, we fix a non-trivial solution $U\in H^1_{\tilde{\Gamma}}(B_{\tilde{r}})$ to \eqref{problU}. 
Then, inspired by the approximation technique developed in \cite{DelFel} and \cite{DelFelVit}, we introduce a sequence of boundary value problems on the approximating domains and we prove that the resulting sequence of solutions $\{U_n\}$ converges to $U$. The goal of performing such a construction is to derive a Pohozaev-type identity (actually inequality by making use of Lemma \ref{lemtipostellatura}) for each $U_n$ which enjoys enough regularity, and consequently for $U$.

More precisely, let $r_0\in (0,\tilde{r})$ be as in Lemma \ref{dis} and $\bar{n}=\bar{n}(r_0) \in \mathbb{N}\setminus \{0\}$ as in Lemma \ref{lemtipostellatura}.  
For every $n\geq \bar{n}$ we consider the boundary value problem
\begin{equation}\label{indicizzato}
\begin{cases}
-\mathrm{div}\left(A\nabla U_n\right)=\tilde fU_n &\mathrm{in \ } \mathcal{B}_{r_0,n},\\
U_n=G_n & \mathrm{on \ } \partial \mathcal{B}_{r_0,n},
\end{cases}
\end{equation}
where $\{G_n\}\subseteq C^\infty(\overline{B_{\tilde{r}}}\setminus\tilde{\Gamma})$ and
\begin{equation}\label{gnconvau}
G_n\to U \quad \text{in $H^1(B_{\tilde{r}})$}.
\end{equation}
Without loss of generality we can suppose that $G_n$ vanishes on $\gamma_{\tilde{r},n}$ for each fixed $n$. In particular, by a weak solution to \eqref{indicizzato} we mean a function $U_n\in H^1(\mathcal{B}_{r_0,n})$ such that 
\begin{equation*}
U_n = G_n\quad \text{on $\partial \mathcal{B}_{r_0,n}$ in the trace sense,}
\end{equation*}
and 
\begin{equation*}\label{soluzdebole}
\int_{\mathcal{B}_{r_0,n}} A\nabla U_n \cdot \nabla V \, dx = \int_{\mathcal{B}_{r_0,n}} \tilde{f}U_n V\, dx \quad \text{for every $V\in H^1_0(\mathcal{B}_{r_0,n})$}. 
\end{equation*} 
\begin{prop}\label{convdelleun}
It holds that
\begin{itemize}
\item[(i)] for every $n\geq \bar{n}$ problem \eqref{indicizzato} admits a unique solution $U_n$;
\item[(ii)] $U_n\to U$ in $H^1(B_{r_0})$ after extending each $U_n$ to zero in $B_{r_0}\setminus\mathcal{B}_{r_0,n}$. 
\end{itemize}
\end{prop}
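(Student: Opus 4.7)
For part (i), I would set $V_n := U_n - G_n \in H^1_0(\mathcal{B}_{r_0,n})$ and apply the Lax--Milgram theorem to the bilinear form $a(V,W) := \int_{\mathcal{B}_{r_0,n}}(A\nabla V\cdot\nabla W-\tilde f V W)\,dx$ on $H^1_0(\mathcal{B}_{r_0,n})$. Continuity of $a$ and of the associated load functional $W\mapsto -\int A\nabla G_n\cdot\nabla W+\int\tilde f G_n W$ is routine from \eqref{normadiA} and Lemma \ref{dis}; the crucial point is coercivity. Given $V\in H^1_0(\mathcal{B}_{r_0,n})$, its zero extension to $B_{r_0}$ lies in $H^1(B_{r_0})$ with vanishing trace on $\partial B_{r_0}$, so Lemma \ref{dis} (with $r=r_0$) gives $\int_{\mathcal{B}_{r_0,n}}|\tilde f|V^2\leq\frac14\|\nabla V\|_{L^2}^2$; combined with \eqref{Amaggioreidunmezzo} and Poincar\'e's inequality this yields $a(V,V)\geq c\|V\|_{H^1_0}^2$. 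Lax--Milgram then produces the unique $V_n$, hence the unique $U_n=V_n+G_n$.

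For (ii), testing the equation for $V_n$ with $V_n$ itself, then using coercivity, Cauchy--Schwarz and Lemma \ref{dis}, first yields the uniform bound $\|V_n\|_{H^1_0}\leq C$; consequently, the zero extensions $\widetilde{U_n}$---well defined in $H^1(B_{r_0})$ because the assumption $G_n=0$ on $\gamma_{\tilde r,n}\supset\gamma_{r_0,n}$ forces $U_n=0$ on $\gamma_{r_0,n}$---are bounded in $H^1(B_{r_0})$. Extract a subsequence with $\widetilde{U_n}\rightharpoonup U^*$ weakly in $H^1(B_{r_0})$. Passing to the limit in the weak formulation of \eqref{indicizzato} against arbitrary $\varphi\in C^\infty_c(B_{r_0}\setminus\tilde\Gamma)$ (which is a legitimate test for $U_n$ as soon as $n$ is so large that $\mathrm{supp}\,\varphi\subset\mathcal{B}_{r_0,n}$) shows that $U^*$ satisfies $-\mathrm{div}(A\nabla U^*)=\tilde f U^*$ in $B_{r_0}\setminus\tilde\Gamma$. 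Since $\widetilde{U_n}$ vanishes on $\tilde\Gamma\cap\{x_N\geq n^{-1/(2\alpha)}\}$ (a family exhausting $\tilde\Gamma\cap B_{r_0}$ up to a negligible set) and its trace on $\partial B_{r_0}$ equals $G_n\chi_{\mathcal{S}_{r_0,n}}$, compactness of the trace operator gives $U^*=0$ on $\tilde\Gamma$ and $U^*=U$ on $\partial B_{r_0}$. Hence $U-U^*$ solves the homogeneous boundary value problem associated with \eqref{problU}, and testing it with itself and invoking Lemma \ref{dis} forces $U^*\equiv U$; by the subsequence principle, the whole sequence $\widetilde{U_n}\rightharpoonup U$ in $H^1(B_{r_0})$.

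To upgrade to strong convergence I would test the equation for $U_n$ with $V_n$ to obtain
\[
\int_{\mathcal{B}_{r_0,n}}A\nabla U_n\cdot\nabla U_n\,dx=\int_{\mathcal{B}_{r_0,n}}A\nabla U_n\cdot\nabla G_n\,dx+\int_{\mathcal{B}_{r_0,n}}\tilde f U_n V_n\,dx.
\]
The first right-hand term converges to $\int A\nabla U\cdot\nabla U$ by the weak--strong pairing of $\nabla\widetilde{U_n}\rightharpoonup\nabla U$ with $A\nabla(G_n\chi_{\mathcal{B}_{r_0,n}})\to A\nabla U$ in $L^2(B_{r_0})$ (the $H^1$-convergence $G_n\chi_{\mathcal{B}_{r_0,n}}\to U$ following from $G_n\to U$ in $H^1$ and absolute continuity of the Lebesgue integral together with $|B_{r_0}\setminus\mathcal{B}_{r_0,n}|\to 0$); the second vanishes in the limit by splitting at a small ball $B_\rho$ and combining $V_n\to 0$ in $L^2$ with the uniform bound $\int|\tilde f|V_n^2\leq\frac14\|\nabla V_n\|^2$ and the absolute continuity of $\int|\tilde f|U^2$ near the origin. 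Hence $\int A\nabla\widetilde{U_n}\cdot\nabla\widetilde{U_n}\to\int A\nabla U\cdot\nabla U$; together with $\widetilde{U_n}\to U$ in $L^2(B_{r_0})$ by Rellich and the equivalence between the $H^1$-norm and the norm induced by the inner product $(u,v)\mapsto\int A\nabla u\cdot\nabla v+\int uv$ (ensured by \eqref{Amaggioreidunmezzo}--\eqref{normadiA}), weak convergence upgrades to strong convergence in $H^1(B_{r_0})$. The main obstacle is the uniqueness identification $U^*=U$: it rests on extracting the correct zero trace of $U^*$ on $\tilde\Gamma$ through the compactness of the trace operator and the exhaustion property of the family $\tilde\Gamma\cap\{x_N\geq n^{-1/(2\alpha)}\}$, and then closing the argument by means of the coercive estimate provided by Lemma \ref{dis}.
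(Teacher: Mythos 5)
Your argument is correct in substance but takes a genuinely different, more circuitous route than the paper. Part (i) is essentially the paper's argument (reduce to the homogeneous problem for $V_n=U_n-G_n$ and apply Lax--Milgram, coercivity coming from Lemma \ref{dis}). For part (ii), however, your plan is structured around the extended sequence $\widetilde{U_n}$: you extract a weak $H^1(B_{r_0})$-limit $U^*$, show it solves the equation in $B_{r_0}\setminus\tilde\Gamma$, identify its trace on $\tilde\Gamma$ (as zero, via an exhaustion argument) \emph{and} on $\partial B_{r_0}$ (as $U$, via $G_n\chi_{\mathcal{S}_{r_0,n}}\to U$ in $L^2(\partial B_{r_0})$), then invoke uniqueness for the limit boundary value problem by testing $U-U^*$ with itself and using Lemma \ref{dis}, and finally upgrade weak to strong convergence by an energy comparison. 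The paper instead works entirely with $V_n$, never identifying the weak limit explicitly: it observes that $V$ has null trace on $\tilde\Gamma$, tests \eqref{weaksolu} with $V$, and shows directly that $Q_{n_k}(V_{n_k},V_{n_k})=F_{n_k}(V_{n_k})\to\int\tilde f UV-\int A\nabla U\cdot\nabla V=0$; coercivity \eqref{Qecoerciva} then forces $\|V_{n_k}\|_{H^1_0}\to 0$ in one stroke, which already gives strong convergence of $U_n$ and makes the separate trace identification on $\partial B_{r_0}$, the uniqueness step, and the norm-convergence upgrade all unnecessary. Your route buys you a more transparent identification of the limit, but at the cost of a heavier boundary-trace analysis and a separate passage from weak to strong convergence; the paper's coercivity squeeze is leaner. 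One small inaccuracy to flag: in your strong-convergence step, the uniform smallness of $\int_{B_\rho}\tilde f U_n V_n$ is not obtained from the bound $\int|\tilde f|V_n^2\le\frac14\|\nabla V_n\|^2$ (that estimate is global and relies on vanishing boundary terms, which $V_n$ does not satisfy on $\partial B_\rho$); what is actually needed is the Hardy inequality \eqref{hardy} (or, under \eqref{asssuftilde2}, Sobolev and H\"older) applied on $B_\rho$ to get $\int_{B_\rho}|\tilde f U_nV_n|\le C\rho^{\varepsilon}$ uniformly in $n$.
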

\begin{proof}
To prove both points (i) and (ii), it is more convenient to study the following homogeneous boundary value problem 
\begin{equation}\label{indicizzatoomogeneo}
\begin{cases}
-\mathrm{div}\left(A\nabla V_n\right)=\tilde fV_n +\tilde{f}G_n+\mathrm{div}\left (A\nabla G_n\right)&\mathrm{in \ } \mathcal{B}_{r_0,n},\\
V_n=0 & \mathrm{on \ } \partial \mathcal{B}_{r_0,n}.
\end{cases}
\end{equation}
We observe that $U_n$ is a weak solution to \eqref{indicizzato} if and only if $V_n:=U_n-G_n$ is a weak solution to \eqref{indicizzatoomogeneo} for each fixed $n\geq \bar{n}$, in the sense that 
$V_n\in H^1_0(\mathcal{B}_{r_0,n})$ and 
\begin{equation}\label{uguaglianzatrafunzionali}
\int_{\mathcal{B}_{r_0,n}}[ A \nabla V_n \cdot\nabla \varphi-\tilde{f} V_n\varphi]\, dx= \int_{\mathcal{B}_{r_0,n}}[\tilde{f} G_n\varphi+\mathrm{div}(A\nabla G_n)\varphi]\, dx \quad\text{for every $\varphi\in H^1_0(\mathcal{B}_{r_0,n})$}. 
\end{equation}
First, we will prove that problem \eqref{indicizzatoomogeneo} admits a unique solution $V_n$ for every $n\geq \bar{n}$.
From this, it will follow that also problem \eqref{indicizzato} has a unique solution given by
\begin{equation}\label{Unelasomma}
U_n=V_n+G_n,
\end{equation}
and thus (i) will be proved.
For every $n\geq \bar{n}$, we set
\begin{equation*}
\begin{split}
&Q_n: H^1_0(\mathcal{B}_{r_0,n})\times H^1_0(\mathcal{B}_{r_0,n}) \to \mathbb{R},\\ 
&Q_n(\psi,\varphi):= \int_{\mathcal{B}_{r_0,n}}[ A \nabla \psi \cdot\nabla \varphi-\tilde{f} \psi\varphi]\, dx,
\end{split}
\end{equation*}
and
\begin{equation*}
\begin{split}
&F_n:H^1_0(\mathcal{B}_{r_0,n})\to \mathbb{R},\\
&F_n(\varphi):=\int_{\mathcal{B}_{r_0,n}}[\tilde{f} G_n+\mathrm{div}(A\nabla G_n)]\varphi\, dx.
\end{split}
\end{equation*} 
We observe that $F_n$ is a linear and bounded operator on $H^1_0(\mathcal{B}_{r_0,n})$: indeed, after extending $\varphi\in H^1_0(\mathcal{B}_{r_0,n})$ to zero in $B_{r_0}\setminus \mathcal{B}_{r_0,n}$ using \eqref{asssuftilde1}, \eqref{hardy}, the boundedness of $\{G_n\}$ in $H^1(B_{r_0})$ by \eqref{gnconvau}, and the continuity of the trace map 
\begin{equation}\label{traceoperator}
H^1(B_r) \to L^2(\partial B_r)\quad\text{for every $r>0$}, 
\end{equation}
we have 
\begin{equation*}
\begin{split}
\left|\int_{\mathcal{B}_{r_0,n}}\tilde{f} G_n\varphi\,dx\right|
\leq& \ \frac{4r_0^\delta}{(N-1)^2} \left(\int_{B_{r_0}}|\nabla G_n|^2\, dx + \frac{N-1}{2r_0}\int_{\partial B_{r_0}}G_n^2\,ds\right)^{1/2}\cdot \left(\int_{B_{r_0}}|\nabla \varphi|^2\, dx\right)^{1/2}\\
\leq & \ \tilde{C} \Vert \varphi\Vert_{H^1_0(\mathcal{B}_{r_0,n})},
\end{split}
\end{equation*} 
for some $\tilde{C}>0$ independent of $n$;
integrating by parts, using \eqref{normadiA} and H\"{o}lder's inequality, we get 
\begin{equation*}
\begin{split}
\left|\int_{\mathcal{B}_{r_0,n}}\mathrm{div}(A\nabla G_n)\varphi\, dx\right|\leq& \ 2 \left(\int_{\mathcal{B}_{r_0,n}}|\nabla G_n|^2\,dx\right)^{1/2}\left(\int_{\mathcal{B}_{r_0,n}}|\nabla \varphi|^2\,dx\right)^{1/2}\\
\leq & \ \tilde{C_1} \Vert \varphi\Vert_{H^1_0(\mathcal{B}_{r_0,n})},
\end{split}
\end{equation*}
for some $\tilde{C_1} >0$ independent of $n$. 
Arguing as above, we can deduce that also the bilinear form $Q_n$ is continuous on $H^1_0(\mathcal{B}_{r,n})$. Moreover, from Lemma \ref{dis}, we can deduce that
\begin{equation}\label{Qecoerciva}
Q_n(\psi,\psi)\geq \frac{1}{4}\Vert \psi\Vert^2_{H^1_0(\mathcal{B}_{r_0,n})}\quad\text{for every $\psi\in H^1_0(\mathcal{B}_{r_0,n})$},
\end{equation} 
and thus $Q_n$ is coercive on $H^1_0(\mathcal{B}_{r_0,n})$.  
By the Lax-Milgram Theorem problem \eqref{indicizzatoomogeneo} has a unique solution $V_n$ for every $n\geq \bar{n}$, and thus (i) is proved.  
As for (ii), we notice that, since 
 $\Vert V_n\Vert _{H^1_0(B_{r_0})}\leq 4(\tilde{C}+\tilde{C_1})$ (up to extend to zero $V_n$ in $B_{r_0}$ outside of $\mathcal{B}_{r_0,n}$), $\{V_n\}$ turns out to be bounded in $H^1_0(B_{r_0})$. Thus there exists a subsequence $\{V_{n_k}\}$ such that 
\begin{equation}\label{vnconvdeb}
V_{n_k} \rightharpoonup V\quad \text{ in $H^1_0(B_{r_0})$}
\end{equation}
for some $V\in H^1_0(B_{r_0})$.  
Furthermore $V$ has null trace on $\tilde{\Gamma}$: indeed $V_n$ has null trace on the set $$\{(x',x_N)\in \R^{N-1}\times \R: x_N\geq \delta\}\cap B_{r_0}$$ for every $\delta>0$, provided that $n$ is sufficiently large (in dependence on $\delta$); this is because $V_n$ is identically zero in $B_{r_0}\setminus \mathcal{B}_{r_0,n}$ and $$\{(x',x_N)\in \R^{N-1}\times \R: x_N\geq \delta\}\cap B_{r_0}\subset B_{r_0}\setminus \mathcal{B}_{r_0,n}$$ for every $\delta>0$, provided that $n$ is sufficiently large (in dependence on $\delta$).
Hence, by a density argument, we are allowed to take $\varphi=V$ in identity \eqref{weaksolu}, having that
\begin{equation}\label{uguale}
\int_{B_{r_0}} A\nabla U\cdot \nabla V\, dx-\int_{B_{r_0}} \tilde{f} UV\, dx=0.    
\end{equation}
On the other hand, using \eqref{uguaglianzatrafunzionali}, integrating by parts, and exploiting \eqref{gnconvau} and \eqref{vnconvdeb}, we have 
\begin{equation*}
\begin{split}
Q_{n_k}(V_{n_k},V_{n_k})= F_{n_k}(V_{n_k}) &= \int_{B_{r_0}} [\tilde{f} G_{n_k}V_{n_k}  + \mathrm{div}(A\nabla G_{n_k}) V_{n_k}]\, dx\\
&= \int_{B_{r_0}} \tilde{f} G_{n_k}V_{n_k}\,dx - \int_{B_{r_0}} A\nabla G_{n_k}\cdot \nabla V_{n_k}\, dx\\
&\to \int_{B_{r_0}}\tilde{f} U V\, dx - \int_{B_{r_0}} A \nabla U \cdot\nabla V\, dx\quad \text{as $k\to\infty$}.
\end{split}
\end{equation*}
Combining this with \eqref{Qecoerciva} and \eqref{uguale}, we deduce that $V_{n_k}\to 0$ in $H^1_0(B_{r_0})$.
Repeating the same argument, one can find out that any other subsequence of $\{V_n\}$ always admits limit equal to 0. Hence, by Urysohn's subsequence principle we can conclude that $V_n\to 0$ in $H^1_0(B_{r_0})$, and therefore, putting together \eqref{Unelasomma} and \eqref{gnconvau}, we infer that
$U_n\to U$ in $H^1(B_{r_0})$, thus proving (ii).
\end{proof}
\section{A Pohozaev-type inequality}\label{section2}
In this section we prove a Pohozaev-type inequality satisfied by any weak solution $U\in H^1_{\tilde{\Gamma}}(B_{\tilde{r}})$ to \eqref{problU}. The strategy of the proof consists in deriving first a ``family" of Pohozaev-type identities and consequently inequalities (taking advantage of the result in Lemma \ref{lemtipostellatura}) for the family of solutions $\{U_n\}$ to \eqref{indicizzato}. 
Then we use the convergence of $\{U_n\}$  to $U$ in $H^1$-sense (proved in Proposition \ref{convdelleun}) to infer a Pohozaev-type inequality for $U$ as well. 
\begin{prop}\label{propopoho}
Let $f$ satisfy either \eqref{asssuftilde1} or \eqref{asssuftilde2}. Let $U\in H^1_{\tilde{\Gamma}}(B_{\tilde{r}})$ be a weak solution to problem \eqref{problU} and let $\nu=\nu(x)=x/|x|$ for every $x\in \partial B_r$. Then, for a.e. $r\in(0, r_0)$ 
\begin{equation}\label{poho1}
\begin{split}
r\int_{\partial B_r} (A\nabla U\cdot \nabla U)\, ds &- 2r\int_{\partial B_r}\frac{1}{\mu}(A\nabla U \cdot\nu)^2\,ds\geq \int_{B_r} (\mathrm{div}\beta)A\nabla U\cdot \nabla U \,dx \\
&-2\int_{B_r}\mathrm{Jac}\beta(A\nabla U)\cdot\nabla U\,dx+\int_{B_r} (dA\nabla U \nabla U)\cdot \beta\, dx\\
&+ 2\int_{B_r} (\beta\cdot \nabla U)\tilde{f} U\, dx
\end{split}
\end{equation}
under assumption \eqref{asssuftilde1}, and
\begin{equation}\label{poho}
\begin{split}
r\int_{\partial B_r} (A\nabla U\cdot \nabla U)\, ds &- 2r\int_{\partial B_r}\frac{1}{\mu}(A\nabla U \cdot\nu)^2\,ds\geq \int_{ B_r} (\mathrm{div}\beta)A\nabla U\cdot \nabla U \,dx \\
&-2\int_{B_r}(\mathrm{Jac}\beta)(A\nabla U)\cdot\nabla U\,dx+\int_{B_r} (dA\nabla U \nabla U)\cdot \beta\, dx\\
&+ r\int_{\partial B_r} \tilde{f} U^2\, ds-\int_{B_r} (\tilde{f}\mathrm{div}\beta+\nabla \tilde{f}\cdot\beta) U^2\, dx
\end{split}
\end{equation}

under assumption \eqref{asssuftilde2}.
\end{prop}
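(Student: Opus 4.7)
The plan is to first derive a Pohozaev-type identity for each approximating solution $U_n$ of \eqref{indicizzato}, and then apply Lemma \ref{lemtipostellatura} to convert it into an inequality by discarding a non-positive boundary term on $\gamma_{r,n}$. Passing to the limit $n\to\infty$ via the $H^1$-convergence $U_n\to U$ from Proposition \ref{convdelleun} will then yield the inequality for $U$, along a suitable subsequence and for a.e. $r\in(0,r_0)$. The starting point is that each $U_n$, solving a divergence-form equation with Lipschitz coefficients \eqref{Aelipschitz} on a smooth domain and vanishing on the smooth portion $\gamma_{r_0,n}$, admits $H^2$-regularity in the interior of $\mathcal{B}_{r_0,n}$ and up to $\gamma_{r_0,n}$, justifying all the integration by parts needed below.

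\textbf{Key calculation.} I would multiply the equation $-\mathrm{div}(A\nabla U_n)=\tilde f U_n$ by the Pohozaev multiplier $\beta\cdot\nabla U_n$ and integrate over $\mathcal{B}_{r,n}$. Standard manipulations, exploiting the symmetry of $A$ and the fact that $A$ is independent of $x_{N+1}$ (which is built into the definition of $dA$), yield an identity of the form
$$-\int_{\partial\mathcal{B}_{r,n}}(A\nabla U_n\cdot\nu)(\beta\cdot\nabla U_n)\,ds+\tfrac{1}{2}\int_{\partial\mathcal{B}_{r,n}}(A\nabla U_n\cdot\nabla U_n)(\beta\cdot\nu)\,ds+\mathcal{I}_n = \int_{\mathcal{B}_{r,n}}\tilde f U_n\,(\beta\cdot\nabla U_n)\,dx,$$
where the bulk term $\mathcal{I}_n$ collects the contributions of $\mathrm{Jac}\beta$, $\mathrm{div}\beta$ and $dA$ that appear on the right-hand side of \eqref{poho1}. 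On $\gamma_{r,n}$, the condition $U_n=0$ forces $\nabla U_n=(\nabla U_n\cdot\nu)\nu$, so the $\gamma_{r,n}$-boundary contribution collapses to $-\tfrac{1}{2}\int_{\gamma_{r,n}}(\nabla U_n\cdot\nu)^2(A\nu\cdot\nu)(\beta\cdot\nu)\,ds$, which is non-positive since $A\nu\cdot\nu>0$ by \eqref{Amaggioreidunmezzo}, $\mu>0$ by \eqref{mumaggiore}, and $\beta\cdot\nu=(Ax\cdot\nu)/\mu\geq 0$ on $\gamma_{r,n}$ by Lemma \ref{lemtipostellatura}; dropping this term produces the desired inequality. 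On the spherical part $\mathcal{S}_{r,n}\subset\partial B_r$, a direct computation using $\mu|x|^2=Ax\cdot x$ and the symmetry of $A$ gives $\beta\cdot\nu=|x|=r$ and $\beta\cdot\nabla U_n=(r/\mu)(A\nabla U_n\cdot\nu)$, so after multiplying by $2$ and rearranging one obtains the two explicit boundary terms appearing on the left-hand side of \eqref{poho1}.

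\textbf{The $\tilde f$-term and passage to the limit.} Under assumption \eqref{asssuftilde1} the term $2\int_{\mathcal{B}_{r,n}}\tilde f U_n\,(\beta\cdot\nabla U_n)\,dx$ is kept as it stands, matching \eqref{poho1}. Under assumption \eqref{asssuftilde2}, using $\nabla\tilde f\in L^p$, I would rewrite it as $\int_{\mathcal{B}_{r,n}}\tilde f\,\beta\cdot\nabla(U_n^2)\,dx$ and integrate by parts, exploiting $U_n=0$ on $\gamma_{r,n}$ and $\beta\cdot\nu=r$ on $\mathcal{S}_{r,n}$, to produce the two extra terms appearing in \eqref{poho}. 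The main obstacle is the limit passage $n\to\infty$: the bulk integrals are handled by the strong $H^1(B_{r_0})$-convergence of Proposition \ref{convdelleun} together with dominated convergence, using \eqref{asssuftilde1}--\eqref{asssuftilde2}, the Hardy inequality \eqref{hardy}, and the uniform bounds \eqref{jaclimitata}--\eqref{divbetalimitata} and \eqref{leproprietadidA}; the spherical integrals are more delicate, since they involve normal traces of the gradient on $\partial B_r$. Here a Fubini-type argument based on the coarea slicing associated to $x\mapsto|x|$ lets one extract a subsequence along which all the requisite trace convergences hold for a.e. $r\in(0,r_0)$, and the fact that $\mathcal{S}_{r,n}$ exhausts $\partial B_r\setminus\tilde\Gamma$ together with $U=0$ on $\tilde\Gamma$ in the trace sense guarantees that the piece of $\partial B_r$ missing from $\mathcal{S}_{r,n}$ gives no contribution in the limit.
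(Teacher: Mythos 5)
Your strategy is essentially the one the paper follows: derive a Rellich--Ne\v{c}as/Pohozaev identity for each $U_n$ on the approximating domain, drop the $\gamma_{r,n}$-boundary term using Lemma \ref{lemtipostellatura}, and pass to the limit $n\to\infty$ via Proposition \ref{convdelleun} and a coarea argument along a subsequence for a.e.\ $r$. Your computations of $\beta\cdot\nu=r$ and $\beta\cdot\nabla U_n=(r/\mu)(A\nabla U_n\cdot\nu)$ on $\mathcal{S}_{r,n}$, of the collapse of the $\gamma_{r,n}$-term to $-\tfrac12\int_{\gamma_{r,n}}(\nabla U_n\cdot\nu)^2(A\nu\cdot\nu)(\beta\cdot\nu)\,ds$, and the integration by parts that produces the two extra $\tilde f$-terms under \eqref{asssuftilde2}, all agree with the paper.

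There is, however, a genuine gap in the opening step. You assert that $U_n$ ``admits $H^2$-regularity in the interior of $\mathcal{B}_{r_0,n}$ and up to $\gamma_{r_0,n}$'' and then integrate the Pohozaev multiplier over the full set $\mathcal{B}_{r,n}$. The origin lies in the interior of $\mathcal{B}_{r,n}$ (indeed $\tilde f_n(0)=n^{-1/2\alpha}>0$), and the right-hand side $\tilde f U_n$ is only in $L^2_{\mathrm{loc}}$ away from $0$, since $\tilde f\in L^\infty_{\mathrm{loc}}(B_{\tilde r}\setminus\{0\})$ and under \eqref{asssuftilde1} the potential can behave like $|x|^{-2+\delta}$ with $\delta$ small. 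Consequently $U_n\in H^2_{\mathrm{loc}}(\mathcal{B}_{r_0,n}\setminus\{0\})$, not $H^2$ up to the origin, and the Rellich--Ne\v{c}as identity cannot be integrated directly on $\mathcal{B}_{r,n}$. The paper deals with this by working on $\mathcal{B}_{r,n}\setminus B_\delta$ (where $H^2$ does hold), which produces additional boundary terms on $\partial B_\delta$, and then lets $\delta\searrow 0$ along a sequence $\delta_h$ for which the $\partial B_{\delta_h}$ flux vanishes; this preliminary limit is a necessary intermediate step that your proposal omits. The rest of your argument is sound once this step is restored.
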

\begin{proof}  
We start by observing that any solution $U_n$ to problem \eqref{indicizzato} satisfies $U_n\in H^2(\mathcal{B}_{r,n}\setminus B_\delta)$ for all $r\in (0,r_0)$, $n\geq \bar{n}$, being $\bar{n}=\bar{n}(r)$ as in Lemma \ref{lemtipostellatura}, and $\delta<1/n^{2\alpha}$ (see Figure \ref{fig:approx-domains2}).
This descends from \cite[Section 2.4]{gri}, since by assumption $\tilde {f}\in L^\infty_{\mathrm{loc}}(B_{r_0}\setminus \{0\})$ and consequently $\tilde{f}U_n\in L^2_{\mathrm{loc}}(\mathcal{B}_{r_0,n}\setminus \{0\})$, the equation in \eqref{indicizzato} holds true in a smooth domain containing $\mathcal{B}_{r,n}\setminus B_\delta$ and in virtue of \eqref{Aelipschitz} and interior regularity.  
\begin{figure}[ht]
\begin{tikzpicture}[line cap=round,line join=round,>=triangle 45, scale=2]
\draw [->] (-1.4,0)-- (1.4,0) node [right] {$x_N$};
\draw [->] (0,-1.4)-- (0,1.4) node [left] {$x_{N+1}$};
\draw  (-1,0) to [out=90, in=180] (0,1) to [out=0, in=120] (0.866,0.5) to [out=238, in=20] (0.43,0.15) to [out=200, in=90]  (0.33,0.076) to [out=270, in=90] (0.35,0.040) to [out=270, in=90] (0.35,0) -- (-1,0);
\draw  (-1,0) to [out=270, in=180] (0,-1)  to [out=0, in=240] (0.866,-0.5) to [out=122, in=340] (0.43,-0.15) to [out=340, in=270]  (0.33,-0.076) to [out=90, in=270] (0.35,-0.040) to [out=90, in=270] (0.35,0) -- (-1,0); 

\draw  (-0.15,0) to [out=90, in=180] (0,0.15) to [out=0, in=90] (0.15,0) to [out=270, in=0] (0,-0.15) to [out=180, in=270] (-0.15,0);
\draw[color=black] (-0.15,0.22) node {$B_\delta$}; 
\draw  (-0.7,0) to [out=90, in=180] (0,0.7) to [out=0, in=120] (0.7,0.32) to [out=238, in=20] (0.43,0.15) to [out=200, in=90]  (0.33,0.076) to [out=270, in=90] (0.35,0.040) to [out=270, in=90] (0.35,0) -- (-1,0);
\draw (0,-0.7)  to [out=0, in=240] (0.7,-0.32) to [out=122, in=340] (0.43,-0.15) to [out=340, in=270]  (0.33,-0.076) to [out=90, in=270] (0.35,-0.040) to [out=90, in=270] (0.35,0) -- (-1,0);
\draw (0,-0.7) to [out=180, in=270] (-0.7,0);
\draw[color=black] (-0.599,0.6) node {$\mathcal{B}_{r,n}$};
\draw[color=black] (-1,0.7) node {$\mathcal{B}_{r_0,n}$};
\end{tikzpicture}
   \caption{A representation of the set $\mathcal{B}_{r,n}\setminus B_\delta$ with $r\in (0,r_0)$, $x'= 0$, $\alpha=1/4$}
\label{fig:approx-domains2}
\end{figure}
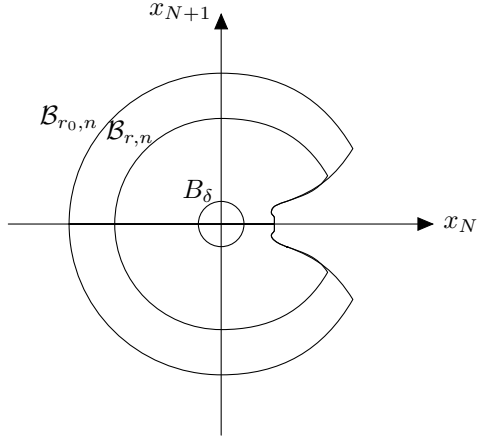

In particular we deduce that 
$$\mathrm{div}((A\nabla U_n\cdot\nabla U_n)\beta - 2 A\nabla U_n (\beta\cdot\nabla U_n))\in L^1(\mathcal{B}_{r,n}\setminus B_\delta).$$ So we can integrate the Rellich-Ne\u{c}as identity 
\begin{equation*} \label{rellich-neca}
\begin{split}
\mathrm{div}((A\nabla U_n\cdot\nabla U_n)\beta - 2 A\nabla U_n (\beta\cdot\nabla U_n))=& \,(\mathrm{div}\beta) A\nabla U_n \cdot \nabla U_n - 2 \mathrm{Jac}\beta (A\nabla U_n)\cdot \nabla U_n \\
&+ (dA\nabla U_n \nabla U_n)\cdot \beta - 2 (\beta\cdot \nabla U_n)\mathrm{div} (A\nabla U_n)
\end{split}
\end{equation*}
over $\mathcal{B}_{r,n}\setminus B_\delta$ and then, by applying \cite[Proposition 2.7]{hofmann2010singular}, we obtain that 
\begin{equation}\label{idenityt}
\begin{split}
r&\int_{\mathcal{S}_{r,n}}(A\nabla U_n\cdot\nabla U_n)\,ds- 2r\int_{\mathcal{S}_{r,n}} \frac{1}{\mu}( A\nabla U_n \cdot \nu)^2\, ds  - \int_{\gamma_{r,n}} \frac{1}{\mu}\left|\frac{\partial U_n}{\partial\nu}\right|^2(A\nu\cdot\nu)(Ax\cdot \nu)\, ds\\
&- \delta \int_{\partial B_\delta} (A\nabla U_n\cdot\nabla U_n)\,ds  + 2 \delta\int_{\partial B_\delta}\frac{1}{\mu} (A\nabla U_n \cdot \nu)^2\, ds = \int_{\mathcal{B}_{r,n}\setminus B_\delta} (\mathrm{div}\beta) A\nabla U_n\cdot \nabla U_n \, dx \\
& - 2\int_{\mathcal{B}_{r,n}\setminus B_\delta} \mathrm{Jac}\beta (A\nabla U_n)\cdot \nabla U_n\, dx +\int_{\mathcal{B}_{r,n}\setminus B_\delta} (dA\nabla U_n\nabla U_n)\cdot \beta\,dx\\ 
& + 2 \int_{\mathcal{B}_{r,n}\setminus B_\delta} (\beta
\cdot \nabla U_n) \tilde{f} U_n\, dx, 
\end{split}
\end{equation} 
where $\nu$ denotes the outward unit normal vector to $ \partial(\mathcal{B}_{r,n}\setminus B_\delta)$. To get \eqref{idenityt}, we used that $\beta\cdot x=|x|^2$ by \eqref{beta} and \eqref{mu}; in addition, if $x\in\mathcal{S}_{r,n}$ then $\nu=x/r$, if $x\in\partial B_\delta$ then $\nu=-x/\delta$; at last, the tangential component of $\nabla U _n $ is null on $\gamma_{r,n}$ since $U_n=G_n=0$ on $\gamma_{\tilde{r},n}\supset\gamma_{r,n}$ in the trace sense, and hence $|\nabla U_n|^2=|\frac{\partial U_n}{\partial\nu}|^2$.

Now we use Lemma \ref{lemtipostellatura} and \eqref{Amaggioreidunmezzo} to infer that 
\begin{equation*}
\int_{\gamma_{r,n}} \frac{1}{\mu} \left|\frac{\partial U_n}{\partial\nu}\right|^2(A\nu\cdot\nu) (Ax\cdot\nu)\,ds\geq 0,
\end{equation*}
and consequently \eqref{idenityt} turns into the following inequality 
\begin{equation}\label{ineq}
\begin{split}
r&\int_{\mathcal{S}_{r,n}}(A\nabla U_n\cdot\nabla U_n)\,ds- 2r\int_{\mathcal{S}_{r,n}} \frac{1}{\mu}( A\nabla U_n \cdot \nu)^2\, ds  \\
&- \delta \int_{\partial B_\delta} (A\nabla U_n\cdot\nabla U_n)\,ds  + 2 \delta\int_{\partial B_\delta}\frac{1}{\mu} (A\nabla U_n \cdot \nu)^2\, ds \geq  \int_{\mathcal{B}_{r,n}\setminus B_\delta} (\mathrm{div}\beta) A\nabla U_n\cdot \nabla U_n \, dx \\
& - 2\int_{\mathcal{B}_{r,n}\setminus B_\delta} \mathrm{Jac}\beta (A\nabla U_n)\cdot \nabla U_n\, dx +\int_{\mathcal{B}_{r,n}\setminus B_\delta} (dA\nabla U_n\nabla U_n)\cdot \beta\,dx\\ 
& + 2 \int_{\mathcal{B}_{r,n}\setminus B_\delta} (\beta
\cdot \nabla U_n) \tilde{f} U_n\, dx.
\end{split}
\end{equation}
For the sake of convenience, from now on we split the proof into two steps.

\emph{Step one.} 
In this step we will consider the limit as $\delta \searrow 0$ in \eqref{ineq}. To such aim, we first observe that, since $U_n\in H^1(\mathcal{B}_{{r_0},n})$ and $\tilde{f}\in L^\infty_{\mathrm{loc}}(\mathcal{B}_{{r_0},n}\setminus \{0\})$, then for all $r\in (0,r_0)$ and $n\geq \bar{n}$ there exists a sequence $\{\delta_h\}\searrow 0$ such that 
\begin{equation}\label{nuova}
\delta_h\int_{\partial B_{\delta_h}}(|\nabla U_n|^2+|\tilde{f}|U_n^2)\, ds \to 0 \quad \text{as $h\to\infty$}.
\end{equation} 
So, combining this with \eqref{mumaggiore} and \eqref{normadiA}, we can deduce that 
\begin{equation}\label{sommavaa0}
\delta_h \left[2\int_{\partial B_{\delta_h}} \frac{1}{\mu} (A\nabla U_n\cdot \nu)^2\, ds - 
\int_{\partial B_{\delta_h}} (A\nabla U_n\cdot\nabla U_n)\,ds\right]\to 0\quad\text{as $h \to \infty$}.
\end{equation}
Next, by \eqref{normadiA}, \eqref{divbetalimitata}, \eqref{jaclimitata}, \eqref{leproprietadidA}, \eqref{betaequasix}, and the fact that $U_n\in H^1(B_{r})$ after extending $U_n$ to zero in $B_r\setminus \mathcal{B}_{r,n}$, applying the absolute continuity of the Lebesgue integral, we have that as $\delta\searrow 0$
\begin{equation}\label{imp}
\begin{split}
\int_{\mathcal{B}_{r,n}\setminus B_\delta}(\mathrm{div}\beta)A\nabla U_n\cdot\nabla U_n \, dx=\int_{B_r\setminus B_\delta}(\mathrm{div}\beta)A\nabla U_n\cdot\nabla U_n \, dx &\to \int_{B_r} (\mathrm{div}\beta)A\nabla U_n\cdot\nabla U_n \, dx,\\
\int_{\mathcal{B}_{r,n}\setminus B_\delta} \mathrm{Jac}\beta (A\nabla U_n)\cdot\nabla U_n\,dx= \int_{B_r\setminus B_\delta}\mathrm{Jac}\beta (A\nabla U_n)\cdot\nabla U_n\,dx&\to \int_{B_r} \mathrm{Jac}\beta (A\nabla U_n)\cdot\nabla U_n\,dx,\\
\int_{\mathcal{B}_{r,n}\setminus B_\delta} (dA\nabla U_n\cdot\nabla U_n)\cdot\beta\, dx =\int_{B_r\setminus B_\delta}(dA\nabla U_n\cdot\nabla U_n)\cdot\beta\, dx&\to \int_{B_r} (dA\nabla U_n\cdot\nabla U_n)\cdot\beta\, dx.
\end{split}
\end{equation}
Moreover, under assumption \eqref{asssuftilde1}, using \eqref{betaequasix} and applying the H\"{o}lder inequality to $|\nabla U_n|$ and $|U_n|/|x|$, we obtain that
\begin{equation*}\label{ispirata}
\begin{split}
\int_{B_r}|(\beta\cdot\nabla U_n)\tilde{f}U_n|\, dx &\leq\mathrm{const}\, r^\delta \int_{B_r} |\nabla U_n|\cdot \frac{|U_n|}{|x|}\, dx\\
& \leq\mathrm{const}\, r^\delta \left(\int_{B_r} |\nabla U_n|^2\, dx\right)^{1/2}\left(\int_{B_r} \frac{|U_n(x)|^2}{|x|^2}\,dx\right)^{1/2},
\end{split}
\end{equation*}
for some $\mathrm{const}>0$ independent of $r$. 
From this, \eqref{hardy} and the fact that $\{U_n\}$ is bounded in $H^1(B_r)$ thanks to Proposition \ref{convdelleun}, we can infer that $(\beta\cdot\nabla U_n)\tilde{f}U_n\in L^1(B_r)$. Therefore by Lebesgue's dominated convergence theorem, we can conclude that as $\delta\searrow 0$
\begin{equation}\label{imp2}
\int_{\mathcal{B}_{r,n}\setminus B_{\delta}}(\beta\cdot \nabla U_n) \tilde{f}U_n \, dx=\int_{B_r\setminus B_\delta}(\beta\cdot \nabla U_n) \tilde{f}U_n \, dx \to \int_{B_r} (\beta\cdot \nabla U_n) \tilde{f}U_n \, dx.
\end{equation} 
Taking into account \eqref{sommavaa0}, \eqref{imp} and \eqref{imp2}, passing to the limit as $h\to\infty$ in \eqref{ineq} with $\delta=\delta_h$, we get that for all $r\in (0,r_0)$ and $n\geq \bar{n}$
\begin{equation}\label{ineq2}
\begin{split}
r\int_{\partial B_r}(A\nabla U_n\cdot\nabla U_n)\,ds&- 2r\int_{\partial B_r} \frac{1}{\mu}( A\nabla U_n \cdot \nu)^2\, ds  \geq   \int_{B_r} (\mathrm{div}\beta) A\nabla U_n\cdot \nabla U_n \, dx \\
& - 2\int_{B_r} \mathrm{Jac}\beta(A\nabla U_n)\cdot \nabla U_n\, dx +\int_{B_r} (dA\nabla U_n\nabla U_n)\cdot \beta\,dx\\ 
& +2\int_{B_r} (\beta\cdot \nabla U_n)\tilde{f} U_n\, dx,
\end{split}
\end{equation}
under assumption \eqref{asssuftilde1}. 

On the other hand, under assumption \eqref{asssuftilde2}, 
since it holds that
\begin{equation*}
(\beta\cdot \nabla U_n) \tilde{f} U_n= \tilde{f} (\beta\cdot \nabla (U_n^2/2)),
\end{equation*}
we use the divergence theorem to rewrite the last term in \eqref{ineq} as follows
\begin{equation}\label{div}
\begin{split}
\int_{\mathcal{B}_{r,n}\setminus B_\delta}(\beta\cdot \nabla U_n) \tilde{f} U_n\, dx =& \,\frac{r}{2}\int_{\mathcal{S} _{r,n}}\tilde{f}U_n^2\,ds-\frac{\delta}{2}\int_{\partial B_\delta} \tilde{f} U_n^2\,ds\\
& -\frac{1}{2}\int_{\mathcal{B}_{r,n}\setminus B_\delta} (\nabla \tilde{f}\cdot \beta)U_n^2\, dx -\frac{1}{2}\int_{\mathcal{B}_{r,n}\setminus B_\delta} (\mathrm{div}\beta) \tilde{f}U_n^2\, dx. 
\end{split}
\end{equation} 
In particular, we have that as $\delta\searrow 0$
\begin{equation}\label{pezzinuovi}
\begin{split}
\int_{\mathcal{B}_{r,n}\setminus B_\delta}(\nabla \tilde{f}\cdot\beta)U_n^2\,dx &= \int_{B_r\setminus B_\delta}(\nabla \tilde{f}\cdot\beta)U_n^2\,dx\to \int_{B_r} (\nabla \tilde{f}\cdot\beta)U_n^2\,dx,\\
\int_{\mathcal{B}_{r,n}\setminus B_\delta} (\mathrm{div}\beta)\tilde{f}U_n^2\,dx &= \int_{B_r}(\mathrm{div}\beta)\tilde{f}U_n^2\,dx,
\end{split}
\end{equation}
by the absolute continuity of the Lebesgue integral, since $(\nabla \tilde{f}\cdot\beta)U_n^2$ and $(\mathrm{div}\beta)\tilde{f}U_n^2$ are both in $L^1(B_r)$, as a consequence of the H\"{o}lder inequality, the fact that $\tilde{f}\in W^{1,p}(B_r)$, \eqref{betaequasix} and \eqref{divbetalimitata}. 
Thus, plugging \eqref{div} into \eqref{ineq}, by \eqref{sommavaa0}, \eqref{imp}, \eqref{nuova} and \eqref{pezzinuovi}, passing to the limit as $h\to\infty$ with $\delta=\delta_h$, we get that for all $r\in (0,r_0)$ and $n\geq \bar{n}$
\begin{equation}\label{ineq3}
\begin{split}
r\int_{\partial B_r}(A\nabla U_n\cdot\nabla U_n)\,ds&- 2r\int_{\partial B_r} \frac{1}{\mu}( A\nabla U_n \cdot \nu)^2\, ds  \geq   \int_{B_r} (\mathrm{div}\beta) A\nabla U_n\cdot \nabla U_n \, dx \\
& - 2\int_{B_r} \mathrm{Jac}\beta(A\nabla U_n)\cdot \nabla U_n\, dx +\int_{B_r} (dA\nabla U_n\nabla U_n)\cdot \beta\,dx\\ 
& +r\int_{\partial B_r} \tilde{f} U_n^2\, ds-\int_{B_r} (\tilde{f}\mathrm{div}\beta + \nabla \tilde{f}\cdot\beta)U_n^2\,dx,
\end{split}
\end{equation}
under assumption \eqref{asssuftilde2}.

\emph{Step two.} In this step we will take the limit as $n\to \infty$ in \eqref{ineq2} and \eqref{ineq3}. 
We start by noting that, since $A$ is symmetric, we have the following identity 
\begin{equation*}\label{aggiungoesottraggo}
A\nabla U_n\cdot \nabla U_n -A\nabla U\cdot \nabla U= A\nabla U_n\cdot \nabla (U_n-U)+A\nabla U\cdot\nabla (U_n-U).
\end{equation*} 
By this, \eqref{divbetalimitata}, and \eqref{normadiA}, applying H\"{o}lder's inequality,
using that $U_n\to U$ in $H^1(B_{r})$ thanks to Proposition \ref{convdelleun}, and consequently that $\{\nabla U_n\}$ is bounded in $L^2(B_r)$, we thus have that
\begin{equation}\label{primopezzo}
\begin{split}
\int_{B_r}&|\mathrm{div}\beta||A\nabla U_n\cdot \nabla U_n -A\nabla U\cdot \nabla U|\, dx\\
\leq& \int_{B_r}|\mathrm{div}\beta||A\nabla U_n|| \nabla (U_n-U) |\,dx+\int_{B_r}|\mathrm{div}\beta|
|A\nabla U|| \nabla (U_n-U) |\, dx\\
\leq&\, 2(N+2)\left(\int_{B_r} |\nabla U_n||\nabla (U_n-U) |\,dx+ \int_{B_r} |\nabla U||\nabla (U_n-U) |\,dx\right)\\
\leq&\, 2(N+2) \left(\int_{B_r} |\nabla (U_n-U) |^2\, dx\right)^{1/2}\left[\left(\int_{B_r} |\nabla U_n |^2\, dz\right)^{1/2}+\left(\int_{B_r} |\nabla U |^2\, dz\right)^{1/2}\right]\to 0
\end{split}
\end{equation}
as $n\to \infty$. 
Similarly to the above identity, we can write 
\begin{equation*}
\mathrm{Jac}\beta (A\nabla U_n) \cdot \nabla U_n- \mathrm{Jac}\beta (A\nabla U) \cdot \nabla U = \mathrm{Jac}\beta (A \nabla(U_n-U)) \cdot \nabla U_n+ \mathrm{Jac}\beta (A\nabla U)\cdot \nabla(U_n-U), 
\end{equation*}
obtained by adding $\pm \mathrm{Jac}\beta(A\nabla U)\cdot \nabla U_n$ to the left-hand side. 
Using this, \eqref{jaclimitata}, \eqref{normadiA}, and arguing as when inferring \eqref{primopezzo}, we can deduce that for some $\mathrm{const}>0$ independent of $r$ 
\begin{equation}\label{jacconv}
\begin{split}
\int_{B_r}& |\mathrm{Jac}\beta(A\nabla U_n) \cdot \nabla U_n- \mathrm{Jac}\beta (A\nabla U) \cdot \nabla U|\, dx\\
&\leq \int_{B_r} \Vert\mathrm{Jac}\beta\Vert |A\nabla (U_n-U)||\nabla U_n|\, dx + \int_{B_r}  \Vert\mathrm{Jac}\beta\Vert |A\nabla U||\nabla (U_n-U)|\,dx\\
&\leq \mathrm{const}\left(\int_{B_r}|\nabla (U_n-U)||\nabla U_n| \,dx + \int_{B_r} |\nabla U||\nabla (U_n-U)|\, dx\right)\to 0
\end{split} 
\end{equation}
as $n\to\infty$. In analogous way, we can write 
\begin{equation*}
dA\nabla U_n\nabla U_n  - dA\nabla U\nabla U = dA\nabla U_n\nabla (U_n-U) + dA\nabla U\nabla (U_n-U),
\end{equation*}
obtained by adding $\pm dA\nabla U\nabla U_n$ to the left-hand side, and applying \eqref{simmetriadA} and \eqref{differenza}.  
Exploiting the last identity, \eqref{betaequasix}, \eqref{leproprietadidA}, and arguing as when inferring \eqref{primopezzo}, we have that for some $\mathrm{const}>0$ independent of $r$ 
\begin{equation}\label{dAconv}
\begin{split}
\int_{B_r} |dA \nabla U_n\nabla U_n&\cdot\beta- dA \nabla U\nabla U\cdot\beta|\, dx\leq  \int_{B_r} |dA \nabla U_n\nabla U_n- dA \nabla U\nabla U||\beta|\,dx\\
\leq &\,\mathrm{const}\,r\left(\int_{B_r}|dA\nabla U_n\nabla (U_n-U)|\, dx +\int_{B_r}|dA\nabla U\nabla (U_n-U)|\, dx\right)\\
\leq &\,\mathrm{const}\,r \left(\int_{B_r}|\nabla U_n||\nabla (U_n-U)|\, dx+ \int_{B_r} |\nabla U||\nabla (U_n-U)|\, dx\right)\to 0
\end{split}
\end{equation}
as $n\to\infty$.  
To conclude the convergence as $n\to \infty$  of the right-hand side of \eqref{ineq2}, we observe that 
\begin{equation*}
(\beta\cdot \nabla U_n)\tilde{f} U_n - (\beta\cdot\nabla U) \tilde{f}U = (\beta\cdot \nabla U_n)\tilde{f}(U_n-U)+ \tilde{f}U(\beta\cdot\nabla (U_n-U)) 
\end{equation*}
obtained by adding $\pm (\beta\cdot \nabla U_n) \tilde{f}U$ to the left-hand side; therefore, by this and \eqref{betaequasix}, applying H\"{o}lder's inequality and \eqref{hardy}, we have that for some $\mathrm{const}>0$ independent of $r$
\begin{equation}\label{riscritta}
\begin{split}
\int_{B_r}|(\beta\cdot \nabla U_n)\tilde{f} &U_n - (\beta\cdot\nabla U) \tilde{f} U|\,dx\\
\leq\int_{B_r} |(\beta\cdot &\nabla U_n) \tilde{f}(U_n-U)|\,dx +\int_{B_r}  |\tilde{f} U (\beta\cdot \nabla (U_n-U))|\, dx\\
\leq \mathrm{const}\,r^\delta&\left( \int_{B_r}|\nabla U_n|\frac{|U_n-U|}{|x|} \, dx + \int_{B_r} \frac{|U(x)|}{|x|}|\nabla (U_n-U)|\, dx \right)\\
\leq \mathrm{const}\,r^\delta& \biggl(\left[\int_{B_r}|\nabla U_n|^2\, dx\right]^{1/2}\left[\int_{B_r}|\nabla (U_n-U)|^2\,dx + \frac{N-1}{2r}\int_{\partial B_r} |U_n-U|^2\, ds\right]^{1/2}\\
& +\left[\int_{B_r}|\nabla (U_n-U)|^2\, dx\right]^{1/2}\left[\int_{B_r}|\nabla U|^2\, dx + \frac{N-1}{2r}\int_{\partial B_r}U^2\, ds\right]^{1/2}\biggr)\to 0
\end{split}
\end{equation}
as $n\to \infty$, where in addition we have used the convergence $U_n\to U$ in $H^1(B_{r})$ proved in Proposition \ref{convdelleun}, which in turn implies that $\{\nabla U_n\}$ is bounded in $L^2(B_r)$ and that $U_n\to U$ in $L^2(\partial B_r)$ by the continuity of the trace operator in \eqref{traceoperator}.
As for the left-hand side of \eqref{ineq2}, arguing precisely as when deriving \eqref{primopezzo}, we have that 
\begin{equation}\label{servira}
\int_{B_{r_0}}A\nabla U_n\cdot\nabla U_n\, dx\to \int_{B_{r_0}}A\nabla U\cdot\nabla U\, dx\quad \text{as $n\to\infty$}.
\end{equation} 
From this, using the coarea formula, we deduce that up to a subsequence still denoted with $U_n$
\begin{equation}\label{spuntorisultato}
\int_{\partial B_r} A\nabla U_n\cdot \nabla U_n\, ds\to \int_{\partial B_r} A\nabla U\cdot \nabla U\, ds \quad\text{as $n\to\infty$, for a.e. $r\in (0,r_0)$}.
\end{equation}
At last, using \eqref{mumaggiore}, \eqref{normadiA}, the H\"{o}lder inequality, the convergence $U_n\to U$ in $H^1(B_{r_0})$ and consequently the boundedness of $\{\nabla U_n\}$ in $L^2(B_{r_0})$, we have that
\begin{equation*}\label{secondotermdibordo}
\begin{split}
\int_{ B_{r_0}}\frac{1}{\mu}|(A\nabla U_n\cdot \nu)^2 &- (A\nabla U \cdot\nu)^2|\, ds\leq 2 \int_{B_{r_0}} |A\nabla (U_n-U)\cdot\nu||A\nabla (U_n+U)\cdot\nu|\, ds\\
\leq& \,4 \left(\int_{B_{r_0}} |\nabla (U_n-U)|^2\, dx\right)^{1/2}\left(\int_{B_{r_0}} |\nabla (U_n+U)|^2\, dx\right)^{1/2}\ \to 0\\
\end{split}
\end{equation*}
as $n\to \infty$; hence up to a subsequence still denoted with $U_n$, it holds that 
\begin{equation}\label{serve2}
\int_{\partial B_r} \frac{1}{\mu} (A\nabla U_n\cdot \nu)^2 \, ds\to \int_{\partial B_r} \frac{1}{\mu} (A\nabla U\cdot \nu)^2 \, ds\quad \text{as $n\to\infty$, for a.e. $r\in (0,r_0)$}. 
\end{equation}
Putting together \eqref{primopezzo}, \eqref{jacconv}, \eqref{dAconv}, \eqref{riscritta}, \eqref{spuntorisultato}, \eqref{serve2}, and passing to the limit as $n\to\infty$ in \eqref{ineq2}, we have shown the validity of \eqref{poho1} for a.e. $r\in (0,r_0)$, under assumption \eqref{asssuftilde1}. 

On the other hand, under assumption \eqref{asssuftilde2}, using \eqref{divbetalimitata}, and applying the H\"{o}lder inequality and \eqref{stimadiL2star}, we have that for some $\mathrm{const}>0$ independent of $r$
\begin{equation}\label{secondass}
\begin{split}
\int_{B_r}|\tilde{f}\mathrm{div}\beta|&|U_n^2-U^2|\, dx\leq (N+2)\int_{B_r}|\tilde{f}||U_n-U||U_n+U|\, dx\\
\leq (N+2)&\left(\int_{B_r}|\tilde{f}||U_n-U|^2\,dx\right)^{1/2}\left(\int_{B_r}|\tilde{f}||U_n+U|^2\,dx\right)^{1/2}\\
\leq \mathrm{const}&\, r^{\frac{2p-N-1}{p}}\left(\int_{B_r}|\nabla (U_n-U)|^2\,dx+\frac{N-1}{2r}\int_{\partial B_r}|U_n-U|^2\,ds\right)^{1/2}\\
&\cdot \left(\int_{B_r}|\nabla (U_n+U)|^2\,dx+\frac{N-1}{2r}\int_{\partial B_r}|U_n+U|^2\,ds\right)^{1/2}\to 0
\end{split}
\end{equation}
as $n\to\infty$, where in addition we have used the convergence $U_n\to U$ in $H^1(B_{r})$ proved in Proposition \ref{convdelleun}, which in turn, summed to the continuity of the trace operator in \eqref{traceoperator}, implies that $U_n\to U$ in $L^2(\partial B_r)$ and $\{U_n\}$ is bounded in $L^2(\partial B_r)$.
Moreover, still under assumption \eqref{asssuftilde2}, taking advantage of \eqref{betaequasix} and applying again the H\"{o}lder inequality, we get that for some $\mathrm{const}>0$ independent of $r$
\begin{equation}\label{cisiamoquasi}
\begin{split}
\int_{B_r} |\nabla \tilde{f}\cdot \beta||U_n^2-U^2|\, dx \leq \mathrm{const}\, r \int_{B_r}|\nabla \tilde{f}||U_n-U||U_n+U|\,dx\to 0
\end{split}
\end{equation}
as $n\to \infty$, reasoning as in \eqref{secondass}. 
Eventually, under assumption \eqref{asssuftilde2}, using that $\tilde{f}\in L^\infty_\mathrm{loc}(B_{r_0}\setminus \{0\})$, it also holds that for some $\mathrm{const}>0$ independent of $r$
\begin{equation}\label{endup2}
\int_{\partial B_r} |\tilde{f}||U_n^2-U^2|\, ds\leq \mathrm{const} \int_{\partial B_r}|U_n-U||U_n+U|\to 0\quad \text{as $n\to\infty$},
\end{equation}
applying in addition the H\"{o}lder inequality, exploiting the convergence $U_n\to U$ in $L^2(\partial B_r)$ and the boundedness of $\{U_n\}$ in $L^2(\partial B_r)$, both as a consequence of Proposition \ref{convdelleun} and the continuity of the trace map in \eqref{traceoperator}. Assembling \eqref{primopezzo}, \eqref{jacconv}, \eqref{dAconv}, \eqref{spuntorisultato}, \eqref{serve2}, \eqref{secondass}, \eqref{cisiamoquasi} and \eqref{endup2}, and passing to the limit as $n\to\infty$ in \eqref{ineq3}, we have shown the validity of \eqref{poho} for a.e. $r\in (0,r_0)$, under assumption \eqref{asssuftilde2}.  
\end{proof}
\begin{lem}
Let $f$ satisfy either \eqref{asssuftilde1} or \eqref{asssuftilde2}. Let $U\in H^1_{\tilde{\Gamma}}(B_{\tilde{r}})$ be a weak solution to problem \eqref{problU} and let $\nu=\nu(x)=x/|x|$ for every $x\in \partial B_r$. Then, for a.e. $r\in(0, r_0)$ 
\begin{equation}\label{utileperdopo}
\int_{B_r} A \nabla U\cdot\nabla U\, dx -\int_{B_r} \tilde{f}U^2\, dx= \int_{\partial B_r} (A\nabla U\cdot \nu)U\, ds.
\end{equation} 
\end{lem}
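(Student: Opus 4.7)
The plan is to derive the identity first at the level of the approximating solutions $U_n$ supplied by Proposition \ref{convdelleun}, where the extra regularity $U_n\in H^2(\mathcal{B}_{r,n}\setminus B_\delta)$ makes integration by parts rigorous, and then to send $n\to\infty$ exploiting the strong $H^1(B_{r_0})$-convergence $U_n\to U$. This is the same scheme used in Proposition \ref{propopoho}, only considerably simpler, since here a plain integration by parts replaces the Rellich--Ne\v{c}as identity.

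More concretely, I would fix $r\in(0,r_0)$, $n\geq\bar{n}(r)$ and $\delta<n^{-2\alpha}$, multiply the first equation in \eqref{indicizzato} by $U_n$, integrate over $\mathcal{B}_{r,n}\setminus B_\delta$ and apply the divergence theorem to get
\begin{equation*}
\int_{\mathcal{B}_{r,n}\setminus B_\delta} A\nabla U_n\cdot\nabla U_n\,dx-\int_{\mathcal{B}_{r,n}\setminus B_\delta}\tilde f\,U_n^2\,dx=\int_{\mathcal{S}_{r,n}}U_n(A\nabla U_n\cdot\nu)\,ds-\frac{1}{\delta}\int_{\partial B_\delta}U_n(A\nabla U_n\cdot x)\,ds,
\end{equation*}
using that $U_n=G_n\equiv 0$ on $\gamma_{r,n}$, $\nu_{\mathrm{out}}=x/r=\nu$ on $\mathcal{S}_{r,n}$, and $\nu_{\mathrm{out}}=-x/\delta$ on $\partial B_\delta$. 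The last term is bounded, via \eqref{normadiA} and Cauchy--Schwarz, by $2\bigl(\int_{\partial B_\delta}U_n^2\,ds\bigr)^{1/2}\bigl(\int_{\partial B_\delta}|\nabla U_n|^2\,ds\bigr)^{1/2}$; a standard trace inequality on $B_\delta$ combined with \eqref{hardy} yields $\int_{\partial B_\delta}U_n^2\,ds=O(\delta)$, while the $L^1$-integrability argument already used for \eqref{nuova} provides a sequence $\delta_h\searrow 0$ with $\delta_h\int_{\partial B_{\delta_h}}|\nabla U_n|^2\,ds\to 0$, so this contribution vanishes as $h\to\infty$. After extending $U_n\equiv 0$ on $B_r\setminus\mathcal{B}_{r,n}$, we obtain, for every $n\geq\bar{n}$,
\begin{equation*}
\int_{B_r}A\nabla U_n\cdot\nabla U_n\,dx-\int_{B_r}\tilde f\,U_n^2\,dx=\int_{\mathcal{S}_{r,n}}U_n(A\nabla U_n\cdot\nu)\,ds.
\end{equation*}

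The final step is to let $n\to\infty$. The bulk integrals on the left converge to $\int_{B_r}A\nabla U\cdot\nabla U\,dx$ and $\int_{B_r}\tilde f\,U^2\,dx$ by the same H\"{o}lder/Hardy/Sobolev arguments spelled out in \eqref{primopezzo} and \eqref{secondass}. For the boundary term, I would note that $U_n(A\nabla U_n\cdot x/|x|)$ (extended by zero outside $\mathcal{B}_{r_0,n}$, which is legitimate since $U_n=0$ on $\gamma_{r_0,n}$) converges to $U(A\nabla U\cdot x/|x|)$ in $L^1(B_{r_0})$ by Cauchy--Schwarz, \eqref{normadiA} and Proposition \ref{convdelleun}; the coarea formula and a subsequence extraction then imply that, for a.e.~$r\in(0,r_0)$, $\int_{\mathcal{S}_{r,n}}U_n(A\nabla U_n\cdot\nu)\,ds\to\int_{\partial B_r}U(A\nabla U\cdot\nu)\,ds$, proving \eqref{utileperdopo}. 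The main subtlety lies in having to go through the $H^2$-regular approximants $U_n$ rather than working directly with $U\in H^1$, which necessitates both the $1/\delta$-boundary term (vanishing via the trace plus $L^1$-subsequence argument outlined above) and the coarea extraction for the a.e.-in-$r$ convergence of the boundary integral on the moving surface $\mathcal{S}_{r,n}$.
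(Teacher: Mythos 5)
Your proof is correct and follows essentially the same scheme as the paper: test \eqref{indicizzato} with $U_n$ on $\mathcal{B}_{r,n}\setminus B_\delta$, integrate by parts using $U_n=0$ on $\gamma_{r,n}$, send $\delta\searrow 0$ to kill the inner boundary term, then send $n\to\infty$ via Proposition \ref{convdelleun}, the coarea formula and a subsequence extraction to recover the boundary integral for a.e.~$r$. The only cosmetic difference is in disposing of the $\partial B_\delta$-term: you combine a trace estimate plus \eqref{hardy} (giving $\int_{\partial B_\delta}U_n^2\,ds=O(\delta)$) with the sequence $\delta_h$ from \eqref{nuova}, while the paper reuses \eqref{nuova} together with an $L^1$/dominated-convergence argument; both achieve the same end.
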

\begin{proof}
We multiply equation \eqref{indicizzato} with $U_n$ itself and then we integrate over $\mathcal{B}_{r,n}\setminus B_\delta$. Taking into account that 
$U_n=G_n=0$ on $\gamma_{r,n}$, and consequently $U_n$ can be extended to zero in $B_r\setminus \mathcal{B}_{r,n}$, an integration by parts leas to the following identity 
\begin{equation}\label{dapassareallimite2}
\begin{split}
\int_{B_r\setminus B_\delta} A\nabla U_n\cdot\nabla U_n\,dx&-\int_{B_r\setminus B_\delta} \tilde{f}U_n^2\,dx\\
&=\int_{\partial B_r}(A\nabla U_n\cdot \nu)U_n\,ds  
-\int_{\partial B_\delta} (A\nabla U_n\cdot \nu)U_n\,ds. 
\end{split}
\end{equation}
Using \eqref{normadiA}, we can easily deduce that $A\nabla U_n\cdot \nabla U_n\in L^1(B_r)$; thus from the absolute continuity of the Lebesgue intergal, it follows that 
\begin{equation}\label{cheso1}
\int_{B_r\setminus B_\delta}A\nabla U_n\cdot \nabla U_n \, dx\to \int_{B_r}A\nabla U_n\cdot \nabla U_n \, dx \quad \text{as $\delta\searrow 0$}.
\end{equation}
Furthermore, we have that $\tilde{f}U_n^2\in L^1(B_r)$ either by \eqref{hardy} under assumption \eqref{asssuftilde1}, or by the H\"{o}lder inequality and \eqref{stimadiL2star} under assumption \eqref{asssuftilde2}; so, again by the absolute continuity of the Lebesgue integral, we obtain that 
\begin{equation}\label{cheso2}
\int_{B_r\setminus B_\delta} \tilde{f} U_n^2\, dx \to \int_{B_r}\tilde{f} U_n^2\, dx\quad \text{as $\delta\searrow 0$.}
\end{equation}
As for the limit as $\delta\searrow 0$ of the last term in \eqref{dapassareallimite2}, it is sufficient noting that $(A\nabla U_n\cdot\nu)U_n\in L^1(B_{r})$ thanks to \eqref{normadiA}; as a result, by the Lebesgue dominated convergence theorem, it holds that 
\begin{equation}\label{partialvaa0}
\int_{\partial B_{\delta}}(A\nabla U_n\cdot\nu)U_n\, ds= \int_{B_{r}} \chi_{\partial B_\delta}(x)(A\nabla U_n\cdot\nu)U_n\, dx \to 0 \quad \text{as $\delta\searrow 0$}. 
\end{equation}
Summing \eqref{cheso1}, \eqref{cheso2} and \eqref{partialvaa0}, and passing to the limit as $\delta\searrow 0$ in \eqref{dapassareallimite2}, we have that for all $r\in (0,r_0)$ and $n\geq \bar{n}$
\begin{equation}\label{mmm}
\int_{B_r}A\nabla U_n\cdot \nabla U_n \, dx - \int_{B_r}\tilde{f} U_n^2\, dx = \int_{\partial B_r} (A\nabla U_n\cdot\nu)U_n\, ds. 
\end{equation}
To conclude with, we have to pass to the limit as $n\to \infty$ in the previous identity. In particular, thanks to the H\"{o}lder inequality we have that
\begin{equation}\label{ftildeunquadro}
\begin{split}
\int_{B_r}& |\tilde{f}U_n^2-\tilde{f}U^2|\, dx = \int_{B_r} |\tilde{f}||U_n-U||U_n+U|\, dx \\
\leq &\left(\int_{B_r}|\tilde{f}||U_n-U|^2\, dx\right)^{1/2}\left(\int_{B_r}|\tilde{f}||U_n+U|^2\, dx\right)^{1/2}\to 0 \quad\text{as $n\to\infty$},\\
\end{split}
\end{equation}
if in addition either we use \eqref{hardy} if $\tilde{f}$ satisfies assumption \eqref{asssuftilde1}, or we exploit once more the H\"{o}lder inequality and \eqref{stimadiL2star} if $\tilde{f}$ satisfies assumption \eqref{asssuftilde2}. 
Moreover, using \eqref{normadiA} and the H\"{o}lder inequality, we also infer that 
\begin{equation*}
\begin{split}
\int_{B_r}|&(A\nabla U_n\cdot\nu)U_n-(A\nabla U\cdot\nu)U|\, dx\\
\leq &  \int_{B_r} |(A\nabla (U_n-U)\cdot\nu)U_n|\, dx + \int_{B_r} | (A\nabla U\cdot\nu)(U_n-U)|\, dx\\
\leq & \,2 \biggl[\left(\int_{B_r}|U_n|^2\, dx\right)^{1/2}\left(\int_{B_r} |\nabla (U_n-U)|^2\, dx\right)^{1/2}\\
&+ \left(\int_{B_r} |\nabla U|^2\, dx\right)^{1/2}\left(\int_{B_r}|U_n-U|^2\,dx\right)^{1/2}\biggr]\to 0\quad \text{as $n\to \infty$.}
\end{split}
\end{equation*}
Therefore by the coarea formula, we can assert that up to a subsequence still denoted with $U_n$ 
\begin{equation}\label{ultimaaa}
\int_{\partial B_r} (A\nabla U_n\cdot\nu)U_n\, ds\to \int_{\partial B_r} (A\nabla U\cdot\nu)U\, ds\quad \text{as $n\to\infty$, for a.e. $r\in (0,r_0)$}.
\end{equation} 
Hence combining \eqref{servira} with \eqref{ftildeunquadro} and \eqref{ultimaaa}, and passing to the limit as $n\to \infty $ in \eqref{mmm},  we finally get \eqref{utileperdopo} for a.e. $r\in (0,r_0)$.
\end{proof}
\section{The monotonicity formula}\label{section3}
In this section we prove a monotonocity formula for the Almgren frequency function associated with solutions of problem \eqref{problU}. 
To such aim, we fix a non-trivial weak solution $U\in H^1_{\tilde{\Gamma}}(B_{\tilde{r}})$ to problem \eqref{problU} 
and we define the energy function as 
\begin{equation}\label{D}
D\colon r\in (0,\tilde{r})\mapsto D(r):=\frac{1}{r^{N-1}}\left(\int_{B_r} A\nabla U\cdot\nabla U\, dx - \int_{B_r} \tilde{f} U^2\, dx\right),
\end{equation}
and the height function as
\begin{equation}\label{H}
H: r\in (0,\tilde{r})\mapsto H(r):=\frac{1}{r^N} \int_{\partial B_r} \mu U^2\, ds.
\end{equation}
In the following lemma we show that the function $H(r)$ is strictly positive if $r$ is sufficiently small. 
\begin{lem}\label{lapositivitadiH}
Let $r_0>0$ be as in Lemma \ref{dis}. Then the function $H$ defined in \eqref{H} is strictly positive in $(0,r_0]$.
\end{lem}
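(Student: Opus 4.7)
The plan is to argue by contradiction: I will assume that $H(r_1)=0$ for some $r_1\in(0,r_0]$ and derive that $U\equiv 0$ in $B_{\tilde r}$, contradicting the standing hypothesis that $U$ is non-trivial. Since $\mu(x)\geq 1/2$ on $B_{\tilde r}$ by \eqref{mumaggiore}, the assumption $H(r_1)=0$ immediately forces $U=0$ on $\partial B_{r_1}$ in the trace sense.

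Next I would use this boundary vanishing to obtain an energy identity on $B_{r_1}$. Since \eqref{utileperdopo} holds for a.e.\ $r\in(0,r_0)$, I would select a sequence $r_n$ where it applies, and pass to the limit $r_n\to r_1$; the boundary term $\int_{\partial B_{r_n}}(A\nabla U\cdot \nu)U\,ds$ vanishes in the limit because $U|_{\partial B_{r_1}}=0$ and both $U$ and $|\nabla U|$ are controlled in $L^2$ on nearby spheres (alternatively, one can test the weak formulation \eqref{weaksolu} directly with $U\chi_{B_{r_1}}$, after a density argument using that $U\in H^1_{\tilde\Gamma}(B_{\tilde r})$ with null trace on both $\tilde\Gamma\cap B_{r_1}$ and $\partial B_{r_1}$). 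This yields
\begin{equation*}
\int_{B_{r_1}} A\nabla U\cdot\nabla U\,dx=\int_{B_{r_1}}\tilde f U^2\,dx.
\end{equation*}

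Now I would invoke Lemma \ref{dis} at radius $r_1\in(0,r_0]$: the boundary term $Cr_1^{-1+\varepsilon}\int_{\partial B_{r_1}}\mu U^2\,ds$ equals $Cr_1^{\varepsilon-1}\cdot r_1^N H(r_1)=0$, hence
\begin{equation*}
\frac{1}{4}\int_{B_{r_1}}|\nabla U|^2\,dx\leq \int_{B_{r_1}} A\nabla U\cdot\nabla U\,dx-\int_{B_{r_1}}|\tilde f|U^2\,dx\leq \int_{B_{r_1}}\tilde f U^2\,dx-\int_{B_{r_1}}|\tilde f|U^2\,dx\leq 0.
\end{equation*}
Consequently $\nabla U\equiv 0$ on $B_{r_1}$, so $U$ is constant there; since $U=0$ on $\tilde\Gamma\cap B_{r_1}$, which has positive $N$-dimensional measure, this constant must be zero, giving $U\equiv 0$ on $B_{r_1}$.

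The final and main obstacle is to upgrade the local vanishing $U\equiv 0$ on $B_{r_1}$ to the global vanishing $U\equiv 0$ on $B_{\tilde r}$. I would invoke a classical weak unique continuation principle for elliptic equations in divergence form with Lipschitz coefficients (recall \eqref{Aelipschitz}) and potentials $\tilde f$ satisfying either \eqref{asssuftilde1} or \eqref{asssuftilde2}: since $B_{\tilde r}\setminus\tilde\Gamma$ is a connected open subset of $\mathbb{R}^{N+1}$ (the removed set $\tilde\Gamma$ has codimension one, not two) and $U$ is a weak solution of $-\mathrm{div}(A\nabla U)=\tilde f U$ there that vanishes on the open set $B_{r_1}\setminus\tilde\Gamma$, the classical result forces $U\equiv 0$ throughout $B_{\tilde r}\setminus\tilde\Gamma$, and hence $U\equiv 0$ in $B_{\tilde r}$. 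This contradicts $U\not\equiv 0$ and concludes the proof.
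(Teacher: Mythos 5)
Your argument is correct and follows essentially the same path as the paper: assume $H(r_1)=0$, use \eqref{mumaggiore} to get $U|_{\partial B_{r_1}}=0$, obtain the energy identity on $B_{r_1}$, invoke Lemma \ref{dis} with vanishing boundary term to force $\nabla U\equiv 0$ and hence $U\equiv 0$ on $B_{r_1}$, and finish by the classical unique continuation principle for divergence-form operators with Lipschitz coefficients on the connected open set $B_{\tilde r}\setminus\tilde\Gamma$. You are actually more explicit than the paper in two places that deserve the care: the paper states it is "taking $\varphi=U$ in \eqref{weaksolu}," which (since test functions there are compactly supported in $B_{\tilde r}\setminus\tilde\Gamma$ and the integrals are over $B_{\tilde r}$) really means testing with $U\chi_{B_{r_1}}$ after a density argument exploiting the null traces on $\tilde\Gamma$ and $\partial B_{r_1}$, exactly as in your parenthetical; and you spell out that $\nabla U\equiv 0$ on the connected ball $B_{r_1}$ gives a constant that must be zero by the Dirichlet condition. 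Your alternative suggestion of taking a limit $r_n\to r_1$ in \eqref{utileperdopo} is shakier — the boundary flux term $\int_{\partial B_{r_n}}(A\nabla U\cdot\nu)U\,ds$ is not obviously controlled along a generic approximating sequence since $\nabla U\in L^2(\partial B_{r})$ only for a.e.\ $r$ — so the density-argument route you also give is the one to keep.
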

\begin{proof}
We assume by contradiction that $H(r_1)=0$ for some $r_1\in (0,r_0]$. Hence, taking into account \eqref{mumaggiore}, we can deduce that the trace of $U$ is identically zero on $\partial B_{r_1}$. So, taking $\varphi=U$ in \eqref{weaksolu}, we have that 
\begin{equation*}
\int _{B_{r_1}} A\nabla U\cdot \nabla U\, dx= \int _{B_{r_1}}  \tilde{f} U^2 \, dx.
\end{equation*} 
From this and Lemma \ref{dis}, we can conclude that necessarily $U\equiv 0$ in $B_{r_1}$, which in turn implies that $U\equiv 0$ in $B_{\tilde{r}}$ as a consequence of unique continuation principles for second order elliptic equations with Lipschitz coefficients (see e.g. \cite{garofalo}). In particular this contradicts the fact that $U\not\equiv 0$ by assumption. 
\end{proof}

We are now in the position to define Almgren's frequency function as follows
\begin{equation}\label{N}
\mathcal{N}: r\in (0,r_0]\mapsto \mathcal{N}(r):=\frac{D(r)}{H(r)}.
\end{equation}
We premise a simple result which will be crucial to estimate $D'$. 
\begin{lem}Every $U\in H^1(B_r)$ with $r\in (0,r_0]$ satisfies the following inequality
\begin{equation}\label{utile1}
\int_{B_r} |\nabla U|^2\, dz \leq 4r^{N-1} \left(D(r)+\frac{N-1}{4}  H(r)\right).
\end{equation}
\end{lem}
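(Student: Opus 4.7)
The proof is a direct consequence of the Hardy-type bound established in Lemma \ref{dis}. The plan is to unpack the right-hand side of \eqref{utile1} via the definitions \eqref{D} and \eqref{H}, namely
\begin{equation*}
4r^{N-1}\left(D(r)+\frac{N-1}{4}H(r)\right) = 4\int_{B_r} A\nabla U\cdot\nabla U\, dx - 4\int_{B_r}\tilde{f} U^2\, dx + \frac{N-1}{r}\int_{\partial B_r}\mu U^2\, ds,
\end{equation*}
and then recognize this quantity as dominating $\int_{B_r}|\nabla U|^2\, dx$ through a rescaling of \eqref{unificata}.

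First I would multiply inequality \eqref{unificata} through by $4$, obtaining
\begin{equation*}
\int_{B_r}|\nabla U|^2\, dx \leq 4\int_{B_r} A\nabla U\cdot\nabla U\, dx - 4\int_{B_r} |\tilde{f}|U^2\, dx + 4Cr^{-1+\varepsilon}\int_{\partial B_r}\mu U^2\, ds.
\end{equation*}
Since $\tilde{f} U^2 \leq |\tilde{f}|U^2$ pointwise, we have $-4\int_{B_r}|\tilde{f}|U^2\, dx \leq -4\int_{B_r}\tilde{f}U^2\, dx$, so this step trades the middle term for one with the correct sign to match the definition of $D(r)$ in \eqref{D}.

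The final step would be to replace the coefficient $4Cr^{-1+\varepsilon}$ of the boundary integral by $(N-1)/r$: this is allowed because \eqref{serviraancora} gives $Cr^\varepsilon < (N-1)/4$ for every $r \in (0,r_0]$, which is equivalent to $4Cr^{-1+\varepsilon} < (N-1)/r$. Plugging this in yields
\begin{equation*}
\int_{B_r}|\nabla U|^2\, dx \leq 4\int_{B_r} A\nabla U\cdot\nabla U\, dx - 4\int_{B_r}\tilde{f}U^2\, dx + \frac{N-1}{r}\int_{\partial B_r}\mu U^2\, ds,
\end{equation*}
which coincides with $4r^{N-1}(D(r)+(N-1)H(r)/4)$ by the definitions \eqref{D} and \eqref{H}. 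There is no real obstacle: the estimate is essentially a restatement of \eqref{unificata} after absorbing the remainder using the smallness calibration \eqref{serviraancora} built into the choice of $r_0$.
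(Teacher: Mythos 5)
Your proof is correct and is essentially the same as the paper's: both rearrange \eqref{unificata}, use $-|\tilde f| U^2\leq-\tilde f U^2$ to match the definition of $D$, and then absorb the boundary coefficient via \eqref{serviraancora} (noting that $H\geq 0$, which is needed for the replacement of $4Cr^{-1+\varepsilon}$ by $(N-1)/r$). The paper simply states the intermediate bound $\int_{B_r}|\nabla U|^2\,dz\leq 4r^{N-1}(D(r)+Cr^\varepsilon H(r))$ before invoking \eqref{serviraancora}, which is exactly the structure you spell out.
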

\begin{proof}
Taking advantage of \eqref{unificata} and recalling the definitions of $D$ and $H$ given in \eqref{D} and \eqref{H} respectively, we have that 
\begin{equation*}\label{utile}
\int_{B_r} |\nabla U|^2\, dz \leq 4r^{N-1} \left(D(r)+C r^\varepsilon H(r)\right),
\end{equation*}
for every $U\in H^1(B_r)$ with $r\in (0,r_0]$.
Thus \eqref{utile1} is a trivial consequence of this and \eqref{serviraancora}. 
\end{proof}
In the following lemma we show that $\mathcal{N}$ is bounded from below. 
\begin{lem} 
We have that
\begin{equation}\label{Nmaggiore}
\mathcal{N}(r)>-\frac{N-1}{4}\quad \text{for every $r\in (0,r_0]$}.
\end{equation}
\end{lem}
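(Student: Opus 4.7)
The proof will be a direct combination of the positivity of $H$ from Lemma \ref{lapositivitadiH} with the coercivity-type inequality \eqref{unificata} from Lemma \ref{dis}. The plan is to obtain the lower bound $D(r) \geq -Cr^\varepsilon H(r)$ and then invoke the strict inequality \eqref{serviraancora} to conclude.

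First, I would apply the pointwise estimate $-\tilde{f} U^2 \geq -|\tilde{f}|U^2$ to the definition \eqref{D} of $D$, yielding
\begin{equation*}
r^{N-1} D(r) = \int_{B_r} A\nabla U\cdot \nabla U\, dx - \int_{B_r} \tilde{f} U^2\, dx \geq \int_{B_r} A\nabla U\cdot \nabla U\, dx - \int_{B_r} |\tilde{f}| U^2\, dx.
\end{equation*}
Then, applying inequality \eqref{unificata} from Lemma \ref{dis} (valid for every $r\in (0,r_0]$) and discarding the non-negative gradient term on its right-hand side, one obtains
\begin{equation*}
\int_{B_r} A\nabla U\cdot \nabla U\, dx - \int_{B_r} |\tilde{f}| U^2\, dx \geq -C r^{-1+\varepsilon} \int_{\partial B_r} \mu U^2\, ds.
\end{equation*}

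Next, I would divide by $r^{N-1}$ and recognize the quantity $r^{-N}\int_{\partial B_r}\mu U^2\,ds = H(r)$ to conclude
\begin{equation*}
D(r) \geq -C r^{\varepsilon} H(r).
\end{equation*}
Since Lemma \ref{lapositivitadiH} guarantees $H(r) > 0$ on $(0,r_0]$, dividing by $H(r)$ gives $\mathcal{N}(r) \geq -C r^\varepsilon$. Finally, invoking \eqref{serviraancora} which states $Cr^\varepsilon < (N-1)/4$ for all $r\in (0,r_0]$, the desired strict bound
\begin{equation*}
\mathcal{N}(r) \geq -C r^\varepsilon > -\frac{N-1}{4}
\end{equation*}
follows immediately.

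There is essentially no obstacle here: the result is a one-line consequence of inequalities that were carefully set up in the preceding subsection. The only minor point to highlight is the replacement of $\tilde f$ by $|\tilde f|$ (harmless because it only strengthens the lower bound on $D$), and the use of the strict inequality in \eqref{serviraancora}, which is precisely what was prepared in Lemma \ref{dis} to make this bound below for $\mathcal{N}$ strict rather than just non-strict.
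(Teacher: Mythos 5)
Your proof is correct and follows essentially the same route as the paper: combine \eqref{unificata} with the definitions of $D$, $H$ and $\mathcal{N}$ to obtain $\mathcal{N}(r)\geq -Cr^\varepsilon$, then invoke \eqref{serviraancora} for the strict bound. You have merely spelled out the intermediate algebraic steps (the pointwise replacement of $\tilde f$ by $|\tilde f|$, discarding the gradient term, dividing by $H(r)>0$) that the paper compresses into ``by \eqref{unificata}, \eqref{D} and \eqref{H}''.
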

\begin{proof}
By \eqref{unificata}, \eqref{D} and \eqref{H}, we have that 
\begin{equation*}
r^{N-1} D(r) \geq - Cr^{N-1+\varepsilon} H(r)\quad \text{for every $r\in (0,r_0]$}. 
\end{equation*}
Hence, recalling the definition of $\mathcal{N}$ given in \eqref{N}, we deduce that 
\begin{equation}\label{perdirecheillimiteepositivo}
\mathcal{N}(r)\geq - Cr^{\varepsilon}\quad \text{for every $r\in (0,r_0]$},
\end{equation}
which gives \eqref{Nmaggiore}, if in addition we take into account \eqref{serviraancora}.
\end{proof}
In the next lemma we exhibit an estimate from below for the derivative of $D$: to derive such a result, the Pohozaev-type inequality will turn out to be fundamental. 
\begin{lem}
Let $f$ satisfy either \eqref{asssuftilde1} or \eqref{asssuftilde2}. It holds that 
\begin{equation}\label{regolD}
D\in W^{1,1}_\mathrm{loc}(0,\tilde{r})
\end{equation}
and for a.e. $r\in (0,r_0)$
\begin{equation}\label{D'}
D' (r)\geq \frac{2}{r^{N-1}} \int_{\partial B_r} \frac{1}{\mu}(A\nabla U\cdot\nu)^2\, ds + O(r^{\bar{\varepsilon}})\left(D(r)+\frac{N-1}{2}H(r)\right)\quad \text{as $r\to 0^+$},
\end{equation}
where $\nu=\nu(x):=x/|x|$ for every $x\in \partial B_r$ and
\begin{equation}\label{eps}
\bar{\varepsilon} = \begin{cases}
0&\text{if }-1+\varepsilon\geq 0,\\
-1+\varepsilon&\text{if } -1+\varepsilon\leq 0,
\end{cases}
\end{equation}
being $\varepsilon>0$ as in \eqref{esponenteps}.
\end{lem}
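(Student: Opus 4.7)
The plan is to combine a coarea argument for the regularity claim with the Pohozaev-type inequality of Proposition \ref{propopoho} to produce the desired lower bound for $D'$. First, since both $A\nabla U\cdot\nabla U$ and $\tilde f U^2$ lie in $L^1(B_{\tilde r})$ (the latter by \eqref{hardy} under \eqref{asssuftilde1} and by H\"older together with \eqref{stimadiL2star} under \eqref{asssuftilde2}), each of the maps $r\mapsto\int_{B_r} A\nabla U\cdot\nabla U\, dx$ and $r\mapsto\int_{B_r}\tilde f U^2\, dx$ is absolutely continuous on $(0,\tilde r)$, with a.e.\ derivative equal to the corresponding surface integral on $\partial B_r$. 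Multiplying by $r^{1-N}$ yields both $D\in W^{1,1}_{\mathrm{loc}}(0,\tilde r)$ and the explicit formula
\[
D'(r)= -\frac{N-1}{r}D(r)+\frac{1}{r^{N-1}}\int_{\partial B_r}A\nabla U\cdot\nabla U\, ds-\frac{1}{r^{N-1}}\int_{\partial B_r}\tilde f U^2\, ds.
\]

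Next, I would bound the first surface integral above from below via \eqref{poho1} under \eqref{asssuftilde1} or \eqref{poho} under \eqref{asssuftilde2}. The key algebraic observation, exploiting the expansions $\mathrm{div}\beta=N+1+O(|x|)$ and $\mathrm{Jac}\beta=\mathrm{Id}+O(|x|)$ from \eqref{lestimedibeta} together with $A=\mathrm{Id}+O(|x|)$ from \eqref{stimadiA}, is that pointwise
\[
(\mathrm{div}\beta)\, A\nabla U\cdot\nabla U-2\,\mathrm{Jac}\beta(A\nabla U)\cdot\nabla U=(N-1)\, A\nabla U\cdot\nabla U+O(|x|)|\nabla U|^2.
\]
Integrating over $B_r$, inserting in the right-hand side of Pohozaev and dividing by $r^N$, the main contribution $\frac{N-1}{r^N}\int_{B_r}A\nabla U\cdot\nabla U\,dx=\frac{N-1}{r}D(r)+\frac{N-1}{r^N}\int_{B_r}\tilde f U^2\, dx$ (by the definition of $D$) exactly cancels the term $-\frac{N-1}{r}D(r)$ appearing in $D'$, leaving only the boundary term $\frac{2}{r^{N-1}}\int_{\partial B_r}\frac{(A\nabla U\cdot\nu)^2}{\mu}\,ds$ (which is precisely the main term in \eqref{D'}) plus a collection of remainders.

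I would then show that every leftover sits in $O(r^{\bar\varepsilon})\bigl(D(r)+\frac{N-1}{2}H(r)\bigr)$. The generic body error $O(r)\int_{B_r}|\nabla U|^2\, dx$, coming both from the algebraic identity above and from $\int_{B_r}(dA\nabla U\nabla U)\cdot\beta\, dx$ (bounded via \eqref{betaequasix} and \eqref{leproprietadidA}), produces by \eqref{utile1} a contribution of size $O(1)(D+CH)$, which is dominated by $O(r^{\bar\varepsilon})(D+CH)$ since $\bar\varepsilon\leq 0$. Under \eqref{asssuftilde1}, the additional body term $2\int_{B_r}(\beta\cdot\nabla U)\tilde f U\, dx$ is estimated by Cauchy-Schwarz, \eqref{betaequasix}, the pointwise bound $|\tilde f|=O(|x|^{-2+\delta})$ and \eqref{hardy}, giving an $O(r^{-1+\delta})(D+CH)=O(r^{\bar\varepsilon})(D+CH)$ contribution; the boundary piece $-r^{1-N}\int_{\partial B_r}\tilde f U^2\, ds$ and the body piece $\frac{N-1}{r^N}\int_{B_r}\tilde f U^2\, dx$ are treated analogously via the pointwise bound on $\tilde f$, \eqref{mumaggiore} and \eqref{utile1}. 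Under \eqref{asssuftilde2}, the Pohozaev identity \eqref{poho} already carries the boundary term $r\int_{\partial B_r}\tilde f U^2\, ds$, which after division by $r^{N-1}$ cancels exactly the last term of $D'$; the remaining body integrals $\int_{B_r}(\tilde f\,\mathrm{div}\beta+\nabla\tilde f\cdot\beta)U^2\, dx$ are controlled by \eqref{divbetalimitata}, \eqref{betaequasix} and by applying \eqref{stimadiL2star} with H\"older both to $\tilde f\in L^p$ and $\nabla\tilde f\in L^p$.

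I expect the main obstacle to be the algebraic bookkeeping in the second paragraph: the non-identity matrix $A$ perturbs the classical Rellich-type identity that would hold for $\beta(x)=x$ and the Laplacian, and one must verify that every perturbation sits exactly at order $|x|$ so that, after integration and division by $r^N$, it is absorbed in $O(r^{\bar\varepsilon})(D+CH)$. A further delicate point is the case-dependent treatment of the $\tilde f$-terms: only in case \eqref{asssuftilde2} does the Pohozaev identity produce the boundary term needed to cancel the last summand in the expression for $D'$, forcing the two sets of assumptions on $f$ to be handled separately throughout.
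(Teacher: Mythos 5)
Your proposal is correct and follows essentially the same route as the paper: regularity of $D$ via coarea, the explicit formula for $D'$, substitution of the Pohozaev inequality from Proposition \ref{propopoho}, observation that the body terms cancel at leading order (because $\mathrm{div}\beta-2-(N-1)=O(|x|)$ by \eqref{lestimedibeta}), and separate estimation of the remaining $\tilde{f}$-terms under \eqref{asssuftilde1} and \eqref{asssuftilde2} using \eqref{hardy}, \eqref{stimadiL2star}, \eqref{mumaggiore} and \eqref{utile1}. The only presentational difference is that you peel off $(N-1)A\nabla U\cdot\nabla U$ and re-express it via $D(r)$ before cancelling, whereas the paper (in equation \eqref{zero}) groups all four body terms and shows directly that their sum is $\frac{O(r)}{r^N}\int_{B_r}|\nabla U|^2\,dx$; both are the same cancellation with a different bookkeeping, and both leave the same residual terms, including the $\frac{N-1}{r^N}\int_{B_r}\tilde f U^2\,dx$ piece under either assumption, which the paper's estimates \eqref{one} and \eqref{ultimately} handle and which your account implicitly covers through \eqref{utile1}.
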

\begin{proof}
\eqref{regolD} holds since $D$ is the product of the $W^{1,\infty}_{\mathrm{loc}}(0,\tilde{r})$-function $r\mapsto r^{1-N}$ and 
\begin{equation*}
r\mapsto\int_{ B_r} A\nabla U\cdot\nabla U \, dx- \int_{B_r} \tilde{f} U^2\, dx
\end{equation*}
which is in $W^{1,1}(0,\tilde{r})$; this last fact can be deduced by using the coarea formula, the fact that $U\in H^1(B_{\tilde{r}})$, \eqref{normadiA}, and additionally either \eqref{hardy} under assumption \eqref{asssuftilde1}, or the H\"{o}lder inequality and \eqref{stimadiL2star} under assumption \eqref{asssuftilde2}.  
Moreover the coarea formula also allows us to infer that
\begin{equation*}
\begin{split}
D'(r)=& \frac{1}{r^{N-1}} \left(\int_{\partial B_r} A\nabla U\cdot\nabla U \, ds -\int_{\partial B_r} \tilde{f} U^2\, ds\right)+\frac{1-N}{r^{N}}\left(\int_{ B_r} A\nabla U\cdot\nabla U \, dx -\int_{ B_r} \tilde{f} U^2\, dx\right) \\
\end{split}
\end{equation*}
in the sense of distributions and a.e. in $(0,\tilde{r})$.
Then we use \eqref{poho1} and \eqref{poho} under assumption \eqref{asssuftilde1} and \eqref{asssuftilde2} respectively to estimate from below the term $\int_{\partial B_r} A\nabla U\cdot \nabla U\, ds$, obtaining that for a.e. $r\in (0,r_0)$
\begin{equation}\label{D'primordiale}
\begin{split}
D'(r)\geq & \,\frac{2}{r^{N-1}} \int_{\partial B_r} \frac{1}{\mu}(A\nabla U\cdot\nu)^2\, ds +\frac{1}{r^N} \int_{B_r} (\mathrm{div}\beta) A\nabla U\cdot \nabla U\, dx -\frac{2}{r^N}\int_{B_r}\mathrm{Jac}\beta (A\nabla U)\cdot \nabla U\, dx \\
& +\frac{1}{r^N} \int_{B_r} (dA\nabla U\nabla U)\cdot\beta\, dx +\frac{1-N}{r^{N}}\int_{ B_r} A\nabla U\cdot\nabla U \, dx-\frac{1}{r^{N-1}} \int_{\partial B_r} \tilde{f} U^2\, ds\\
&+ \frac{2}{r^N} \int_{B_r} (\beta \cdot\nabla U) \tilde{f} U\, dx +\frac{N-1}{r^{N}}\int_{ B_r} \tilde{f} U^2\, dx,
\end{split}
\end{equation}
under assumption \eqref{asssuftilde1}, and
\begin{equation}\label{D'primordiale2}
\begin{split}
D'(r)\geq & \,\frac{2}{r^{N-1}} \int_{\partial B_r} \frac{1}{\mu}(A\nabla U\cdot\nu)^2\, ds +\frac{1}{r^N} \int_{B_r} (\mathrm{div}\beta) A\nabla U\cdot \nabla U\, dx \\
&-\frac{2}{r^N}\int_{B_r}\mathrm{Jac}\beta (A\nabla U)\cdot \nabla U\, dx +\frac{1}{r^N} \int_{B_r} (dA\nabla U\nabla U)\cdot\beta\, dx \\
&+\frac{1-N}{r^{N}}\int_{ B_r} A\nabla U\cdot\nabla U \, dx- \frac{1}{r^{N}}\int_{B_r}(\tilde{f} \mathrm{div}\beta + \nabla \tilde{f}\cdot\beta)U^2\,dx+\frac{N-1}{r^{N}}\int_{ B_r} \tilde{f} U^2\, dx,
\end{split}
\end{equation}
under assumption \eqref{asssuftilde2}. 
First of all we observe that 
\begin{equation}\label{zero}
\begin{split}
\frac{1}{r^N} \int_{B_r} (\mathrm{div}\beta) A\nabla U\cdot \nabla U\, dx -\frac{2}{r^N}&\int_{B_r}\mathrm{Jac}\beta (A\nabla U)\cdot \nabla U\, dx +\frac{1}{r^N} \int_{B_r} (dA\nabla U\nabla U)\cdot\beta\, dx  \\
+\frac{1-N}{r^N} \int_{B_r} A\nabla U\cdot\nabla U\, dx=\,& \frac{O(r)}{r^N}\int_{B_r}|\nabla U|^2\, dx \\
=\, & O(1) \left(D(r)+\frac{N-1}{4}H(r)\right) \quad \text{as $r\to 0^+$},
\end{split}
\end{equation}
as a direct consequence of \eqref{stimadiA}, \eqref{lestimedibeta} and \eqref{leproprietadidA}, and thanks to \eqref{utile1}.
Now we specifically focus on \eqref{D'primordiale}. For this, under assumption \eqref{asssuftilde1} we use \eqref{hardy}, \eqref{mumaggiore} and \eqref{utile1} to deduce that 
\begin{equation}\label{one}
\begin{split}
\left|\int_{B_r} \tilde{f} U^2\, dx\right| &\leq \mathrm{const}\,r^\delta \left[\left(\frac{2}{N-1}\right)^2 \int_{B_r} |\nabla U|^2\, dx + \frac{4}{N-1}r^{-1} \int_{\partial B_r}  \mu U^2\, ds \right]\\
&\leq \mathrm{const}\, r^{\delta+N-1}\left(D(r)+\frac{N-1}{2}H(r)\right),
\end{split}
\end{equation}
for some $\mathrm{const}>0$ independent of $r$; moreover by \eqref{mumaggiore} we also have that 
\begin{equation}\label{two}
\left|\int_{\partial B_r} \tilde{f} U^2\, ds\right| \leq \mathrm{const}\,r^{-2+\delta} \int_{\partial B_r} \mu U^2\, ds \leq \mathrm{const}\,r^{-2+\delta+N} H(r),
\end{equation}
for some $\mathrm{const}>0$ independent of $r$;
lastly, taking into consideration \eqref{betaequasix}, applying the H\"{o}lder inequality to $|x|^{-1}|U|$ and $|\nabla U|$, and using \eqref{hardy}, \eqref{mumaggiore} and \eqref{utile1}, we obtain that
\begin{equation}\label{three}
\begin{split}
\left|\int_{B_r} (\beta\cdot \nabla U) \tilde{f} U \,dx\right|&\leq \mathrm{const}\,r^\delta\int_{B_r}|x|^{-1} |U| |\nabla U|\, dx\\
&\leq \mathrm{const} \, r^\delta \sqrt{r^{N-1}\left(D(r)+\frac{N-1}{2}H(r)\right)}\cdot  \sqrt{r^{N-1}\left(D(r)+\frac{N-1}{4}H(r)\right)}\\
&\leq \mathrm{const} \, r^{-1+\delta+N}\left(D(r)+\frac{N-1}{2}H(r)\right),
\end{split}
\end{equation}
for some $\mathrm{const}>0$ independent of $r$. 
Putting together \eqref{D'primordiale}, \eqref{zero}, \eqref{one}, \eqref{two} and \eqref{three}, we can conclude that a.e. in $(0,r_0)$ under assumption \eqref{asssuftilde1}
\begin{equation*}
\begin{split}
D'(r)\geq & \,\frac{2}{r^{N-1}} \int_{\partial B_r} \frac{1}{\mu}(A\nabla U\cdot\nu)^2\, ds + O(1) \left(D(r)+\frac{N-1}{4}H(r)\right)\\
& + O(r^{-1+\delta}) \left(D(r)+\frac{N-1}{2}H(r)\right) \quad \text{as $r\to 0^+$}. 
\end{split}
\end{equation*}
From this \eqref{D'} immediately follows under assumption \eqref{asssuftilde1}. In order to conclude the proof, we turn to concentrate on \eqref{D'primordiale2}. In particular, under assumption \eqref{asssuftilde2}, by the H\"{o}lder inequality, \eqref{stimadiL2star}, \eqref{mumaggiore} and ultimately \eqref{utile1}, we have that   
\begin{equation}\label{ultimately}
\begin{split}
\left|\int_{B_r}\tilde{f}U^2\,dx\right|\leq&\, \mathrm{const} \,r^{\frac{2p-N-1}{p}}\left(\int_{B_r}|\nabla U|^2\,dx+\frac{N-1}{r}\int_{\partial B_r} \mu U^2\,ds\right)\\
\leq&\, \mathrm{const} \,r^{\frac{2p-N-1}{p}+N-1}\left( D(r)+\frac{N-1}{2}H(r)\right),
\end{split}
\end{equation}
for some $\mathrm{const}>0$ independent of $r$; furthermore, from \eqref{lestimedibeta}, \eqref{betaequasix}, the H\"{o}lder inequality, \eqref{stimadiL2star}, \eqref{mumaggiore} and  \eqref{utile1}, we arrive at
\begin{equation}\label{ultimately2}
\begin{split}
\left|\int_{B_r} (\tilde{f} \mathrm{div}\beta + \nabla \tilde{f} \cdot\beta) U^2\,dx\right|\leq &\, \mathrm{const} \,r^{\frac{2p-N-1}{p}}\left(\int_{B_r}|\nabla U|^2\,dx+\frac{N-1}{r}\int_{\partial B_r} \mu U^2\,ds\right)\\
\leq&\, \mathrm{const} \,r^{\frac{2p-N-1}{p}+N-1}\left( D(r)+\frac{N-1}{2}H(r)\right),  
\end{split}
\end{equation}
for some $\mathrm{const}>0$ independent of $r$. Combining \eqref{D'primordiale2}, \eqref{zero}, \eqref{ultimately} and \eqref{ultimately2}, we can deduce that a.e. in $(0,r_0)$ under assumption \eqref{asssuftilde2}  
\begin{equation*}
\begin{split}
D'(r)\geq & \,\frac{2}{r^{N-1}} \int_{\partial B_r} \frac{1}{\mu}(A\nabla U\cdot\nu)^2\, ds + O(1) \left(D(r)+\frac{N-1}{4}H(r)\right)\\
& + O(r^{-1+\frac{2p-N-1}{p}}) \left(D(r)+\frac{N-1}{2}H(r)\right) \quad \text{as $r\to 0^+$}, 
\end{split}
\end{equation*}
which implies \eqref{D'} under assumption \eqref{asssuftilde2}.
\end{proof}
In the following lemma we compute the derivative of $H$: differently to the result in \cite[Lemma 4.1]{DelFel}, a perturbing term which behaves as $H$ itself appears. 
\begin{lem}
We have that 
\begin{equation}\label{regolaritadiH}
H\in W^{1,1}_{\mathrm{loc}}(0,\tilde{r})
\end{equation}
and 
\begin{equation}\label{H'}
H'(r)= \frac{2}{r^N}\int_{\partial B_r} \mu U \frac{\partial U}{\partial\nu}\, ds + O(H(r))\quad \text{as $r\to 0^+$},
\end{equation}
in a distributional sense and a.e. in $(0,\tilde{r})$, where $\nu=\nu(x):=x/|x|$ for every $x\in \partial B_r$. 
\end{lem}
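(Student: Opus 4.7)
The plan is to compute $H'$ by reducing the surface integral on $\partial B_r$ to a fixed integral on the unit sphere $\mathbb{S}^N$, where one can differentiate pointwise in $r$, and then to transfer the result back. Parametrising $\partial B_r$ by $x=r\theta$ with $\theta\in\mathbb{S}^N$, so that $ds=r^N d\sigma(\theta)$, the factor $r^{-N}$ in \eqref{H} disappears and
\begin{equation*}
H(r)=\int_{\mathbb{S}^N}\mu(r\theta)\,U^2(r\theta)\,d\sigma(\theta).
\end{equation*}
This representation is the key reformulation: it decouples the geometric factor $r^N$ from the analytic content of $\mu U^2$, so that $H'$ will only pick up the radial derivative of the integrand.

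To justify \eqref{regolaritadiH} and differentiation under the integral, I would first observe that $\mu\in C^{0,1}(B_{\tilde r})$ by \eqref{Aelipschitz} together with \eqref{stimadimu}--\eqref{nablamu}, and that $U\in H^1(B_{\tilde r})$; hence $\mu U^2\in W^{1,1}_{\mathrm{loc}}(B_{\tilde r})$ with distributional gradient $U^2\nabla\mu+2\mu U\nabla U$. The standard approach is an approximation argument: pick $U_k\in C^\infty(\overline{B_{\tilde r}}\setminus\tilde\Gamma)$ with $U_k\to U$ in $H^1(B_{\tilde r})$, for which the analogous functions $H_k$ are smooth, and use the continuity of the trace operator \eqref{traceoperator} together with polar coordinates to pass to the limit in the formula for $H_k'$. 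This will yield both $H\in W^{1,1}_{\mathrm{loc}}(0,\tilde{r})$ and
\begin{equation*}
H'(r)=\int_{\mathbb{S}^N}\bigl[\nabla\mu(r\theta)\cdot\theta\,U^2(r\theta)+2\mu(r\theta)\,U(r\theta)\,\nabla U(r\theta)\cdot\theta\bigr]\,d\sigma(\theta)
\end{equation*}
a.e.\ and in the sense of distributions. Undoing the rescaling (so $\theta=\nu$ and $d\sigma=r^{-N}\,ds$) gives
\begin{equation*}
H'(r)=\frac{2}{r^N}\int_{\partial B_r}\mu\,U\,\frac{\partial U}{\partial\nu}\,ds+\frac{1}{r^N}\int_{\partial B_r}(\nabla\mu\cdot\nu)\,U^2\,ds.
\end{equation*}

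To conclude, I would absorb the second term into an $O(H(r))$ remainder: by \eqref{nablamu} we have $|\nabla\mu|\leq \mathrm{const}$ near the origin, while \eqref{mumaggiore} guarantees $\mu\geq 1/2$, so
\begin{equation*}
\left|\frac{1}{r^N}\int_{\partial B_r}(\nabla\mu\cdot\nu)U^2\,ds\right|\leq\frac{\mathrm{const}}{r^N}\int_{\partial B_r}U^2\,ds\leq\frac{2\,\mathrm{const}}{r^N}\int_{\partial B_r}\mu U^2\,ds=O(H(r))
\end{equation*}
as $r\to 0^+$, which gives \eqref{H'}. The only subtle step I expect is the approximation argument used to differentiate $H$ rigorously under only $H^1$-regularity of $U$; compared with \cite[Lemma 4.1]{DelFel}, the genuine novelty here is simply the appearance of the factor $\mu$, whose Lipschitz character produces the harmless perturbative term $O(H(r))$ via \eqref{nablamu} and \eqref{mumaggiore}.
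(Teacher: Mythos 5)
Your proof is correct and follows essentially the same route as the paper: pass to polar coordinates, compute the distributional derivative of $H$ (yielding the extra $\frac{1}{r^N}\int_{\partial B_r}(\nabla\mu\cdot\nu)U^2\,ds$ term), confirm the right-hand side is $L^1_{\mathrm{loc}}$ to get \eqref{regolaritadiH}, and absorb the $\nabla\mu$ term into $O(H(r))$ via \eqref{nablamu} and \eqref{mumaggiore}. The only difference is that you spell out the smooth-approximation step explicitly, whereas the paper delegates the differentiation of $H$ to \cite[Lemma 3.1]{DelFelVit}.
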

\begin{proof}
We have that $H\in L^1_{\mathrm{loc}}(0,\tilde{r})$ since it turns out to be the product between the $L^\infty_ {\mathrm{loc}}(0,\tilde{r})$-function $r\mapsto r^{-N}$ and 
\begin{equation*}\label{funzione2}
r\mapsto \int_{\partial B_r} \mu U^2\, ds,
\end{equation*}
which belongs to $L^1(0,\tilde{r})$. 
To show this, it is sufficient to use that $U\in H^1(B_{\tilde{r}})$ and that, 
\begin{equation}\label{muminore}
\mu(x)\leq\mathrm{const} \quad \text{for every $x\in  B_{\tilde{r}}$},
\end{equation}
for some $\mathrm{const} >1$ independent on $r$, as a consequence of \eqref{stimadimu}.
As for the derivative of $H$, we can reason precisely as in the proof of \cite[Lemma 3.1]{DelFelVit} (keeping in mind that here $s=1/2$ and hence the weight $t^{1-2s}$ does not occur) to deduce that the distributional derivative of $H$ is given by 
\begin{equation}\label{distrib}
H'(r)= \frac{2}{r^N}\int_{\partial B_r} \mu U \frac{\partial U}{\partial\nu}\, ds + \frac{1}{r^N}\int_{\partial B_r} (\nabla \mu \cdot\nu) U^2\, ds.
\end{equation}
Exploiting \eqref{muminore} and \eqref{nablamu}, we find out that
the right-hand side of \eqref{distrib} belongs to $L^1_{\mathrm{loc}} (0,\tilde{r})$. Thus \eqref{regolaritadiH} is proved and \eqref{distrib} holds a.e. in $(0,\tilde{r})$ (not only in a distributional sense). Eventually, in order to get \eqref{H'}, it is enough to observe that 
\begin{equation*}
\frac{1}{r^N}\int_{\partial B_r} (\nabla \mu\cdot\nu ) U^2\, ds=O(1)H(r)\quad \text{as $r\to 0$},
\end{equation*} 
by \eqref{nablamu} and \eqref{mumaggiore}, and recalling the defintion of $H$ given in \eqref{H}. 
\end{proof}
In the next lemma we prove an equivalent formulation of the $L^1_{\mathrm{loc}}$-derivative of $H$ and an important identity relating the derivative of $H$ with $D$. 
\begin{lem}
It holds that a.e. in $(0,\tilde{r})$
\begin{equation}\label{H'equiv}
H'(r) = \frac{2}{r^N}\int_{\partial B_r} (A\nabla U\cdot \nu) U\, ds + O(H(r))\quad \text{as $r\to 0$},
\end{equation}
with $\nu=\nu(x):=x/|x|$ for every $x\in\partial B_r$, and a.e. in $(0,r_0)$
\begin{equation}\label{H'D}
H'(r)=\frac{2}{r}D(r)+O(H(r))\quad \text{as $r\to 0$}.
\end{equation}
\end{lem}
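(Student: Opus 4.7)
The strategy is to derive \eqref{H'equiv} as a perturbation of \eqref{H'}, which has already been established, and then to deduce \eqref{H'D} directly from the variational identity \eqref{utileperdopo}. The pointwise difference between the two integrands to be reconciled is
\[
(A\nabla U\cdot\nu)U - \mu U\tfrac{\partial U}{\partial\nu} = \bigl((A-\mu\mathrm{Id}_{N+1})\nu\cdot\nabla U\bigr)U,
\]
where I have used the symmetry of $A$. The key observation is that $(A-\mu\mathrm{Id}_{N+1})\nu$ is tangent to $\partial B_r$: by the very definition \eqref{mu} of $\mu$ one has $A\nu\cdot\nu = \mu$, so $((A-\mu\mathrm{Id}_{N+1})\nu)\cdot\nu = 0$ on $\partial B_r$.

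I then introduce the auxiliary vector field $e(x):=A(x)x-\mu(x)x$ on $B_{\tilde r}$, which on $\partial B_r$ satisfies $(A-\mu\mathrm{Id}_{N+1})\nu = e/r$ and which vanishes identically in the radial direction on $B_{\tilde r}$, since $e\cdot x= Ax\cdot x-\mu|x|^2\equiv 0$ by \eqref{mu}. From \eqref{stimadiA} and \eqref{stimadimu} one obtains $e(x)=(A-\mathrm{Id}_{N+1})x-(\mu-1)x=O(|x|^2)$, and a direct componentwise computation using also \eqref{nablamu} and the Lipschitz regularity \eqref{Aelipschitz} of $A$ shows that $|\nabla e(x)|=O(|x|)$, hence in particular $|\mathrm{div}\, e(x)| = O(|x|)$ on $B_{\tilde r}$. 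Since $e\cdot\nu= 0$ on $\partial B_\rho$ for every $\rho\in(0,\tilde r)$, the divergence theorem applied to the $W^{1,1}_{\mathrm{loc}}$ vector field $U^2 e$ (which is Lipschitz times a $W^{1,1}$ function) on $B_r$ gives
\[
\int_{B_r}\bigl[2U\nabla U\cdot e + U^2\,\mathrm{div}\, e\bigr]\,dx = \int_{\partial B_r}U^2\,(e\cdot\nu)\,ds = 0
\]
for every $r\in(0,r_0)$. Both integrands in the left-hand side lie in $L^1(B_{r_0})$, so differentiating this identity in $r$ via the coarea formula yields, for a.e.\ $r\in(0,r_0)$,
\[
\int_{\partial B_r}(e\cdot\nabla U)U\,ds = -\tfrac{1}{2}\int_{\partial B_r}U^2\,\mathrm{div}\, e\,ds.
\]
Combining the bound $|\mathrm{div}\, e|=O(r)$ with \eqref{mumaggiore} (which implies $\int_{\partial B_r}U^2\,ds\le 2r^N H(r)$), I obtain
\[
\frac{2}{r^N}\int_{\partial B_r}\bigl[(A\nabla U\cdot\nu)U-\mu U\tfrac{\partial U}{\partial\nu}\bigr]\,ds = \frac{2}{r^{N+1}}\int_{\partial B_r}(e\cdot\nabla U)U\,ds = O(H(r)),
\]
and substituting this into \eqref{H'} proves \eqref{H'equiv}.

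Finally, \eqref{H'D} follows at once: by \eqref{utileperdopo} and the definition \eqref{D} of $D$, $\int_{\partial B_r}(A\nabla U\cdot\nu)U\,ds = r^{N-1}D(r)$ for a.e.\ $r\in(0,r_0)$, and inserting this into \eqref{H'equiv} gives exactly $H'(r)=\tfrac{2}{r}D(r)+O(H(r))$. The main technical input is the second-order vanishing $e(x)=O(|x|^2)$: this is dictated precisely because $\mu$ has been chosen so that $\mu\nu$ equals the radial component of $A\nu$, which forces the tangential perturbation $(A-\mu\mathrm{Id}_{N+1})\nu=e/r$ to be only of order $|x|$, ultimately keeping the error in $H'(r)$ of size $O(H(r))$ rather than larger.
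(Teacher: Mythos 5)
Your argument is correct and essentially coincides with the paper's proof; the packaging is slightly different but the decomposition is the same. The paper subtracts $\mu\,\nabla U\cdot \frac{\beta}{|x|}U$ and works with the tangential field $\frac{\mu(\beta-x)}{|x|}$, whereas you work with $e(x)=A(x)x-\mu(x)x$, which is precisely $\mu(\beta-x)$; so your $e/r$ on $\partial B_r$ is identical to the paper's field there. Both approaches hinge on the same key observation — that $A\nu-\mu\nu$ is tangent to the sphere, so the boundary term in the divergence theorem vanishes — and both arrive at the same $O(H(r))$ estimate (your $|\mathrm{div}\,e|=O(|x|)$ matches the paper's $\mathrm{div}\!\left(\frac{\mu(\beta-x)}{|x|}\right)=O(1)$ after factoring out $1/|x|$, using that the radial component of $e$ vanishes identically). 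A minor advantage of your formulation is that $e$ is Lipschitz on $B_{\tilde r}$ and vanishes to second order at $0$, so you avoid the $1/|x|$ singularity in the auxiliary field, making the divergence-theorem step a bit cleaner to justify. The derivation of \eqref{H'D} from \eqref{utileperdopo} and \eqref{D} is identical in both.
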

\begin{proof}
We immediately notice that once \eqref{H'equiv} is shown, then \eqref{H'D} can be easily derived by using \eqref{utileperdopo} and recalling the definition of $D$ given in \eqref{D}. 
Therefore we have to prove \eqref{H'equiv}: to this aim, we observe that a.e. in $(0,\tilde{r})$
\begin{equation}\label{primaoss}
\begin{split}
\frac{2}{r^N}\int_{\partial B_r} \mu \left( \nabla U\cdot \frac{x}{|x|}\right)U\, ds=& \frac{2}{r^N}\int_{\partial B_r}\mu \left( \nabla U \cdot \frac{\beta}{|x|} \right)U\, ds- \frac{2}{r^N}\int_{\partial B_r} \mu \left( \nabla U \cdot \frac{\beta-x}{|x|}\right) U\, ds\\
=& \frac{2}{r^N}\int_{\partial B_r}\left(A\nabla U \cdot\frac{x}{|x|}\right) U \, ds- \frac{1}{r^N}\int_{\partial B_r} \mu \left(\nabla (U^2) \cdot \frac{\beta-x}{|x|}\right)\, ds,
\end{split}
\end{equation}
where in the first row we have simply added $\pm\mu \left(\nabla U\cdot \frac{\beta}{|x|}\right)U$ to the integrand function and in the second row we have used \eqref{beta} and the fact that $A$ is symmetric. Now, by \eqref{mu} and \eqref{beta} we have that $$\mu \left(\frac{\beta-x}{|x|}\right)\cdot x =0;$$ then applying the divergence theorem before, using the coarea formula after, we deduce that a.e. in $(0,\tilde{r})$
\begin{equation}\label{secondaoss}
0=\int_{\partial B_r}U^2\mathrm{div}\left(\frac{\mu(\beta-x)}{|x|}\right)\, ds + \int_{\partial B_r}\mu \left(\nabla (U^2)\cdot \frac{\beta-x}{|x|}\right)\, ds.
\end{equation}
Now we explicitly compute $\mathrm{div}\left(\frac{\mu(\beta-x)}{|x|}\right)$, having that
\begin{equation*}
\mathrm{div}\left(\frac{\mu(\beta-x)}{|x|}\right)= \nabla \mu \cdot \frac{\beta-x}{|x|}+ \frac{\mu \,\mathrm{div}(\beta-x)}{|x|}- \mu (\beta-x)\cdot \frac{x}{|x|^3},
\end{equation*}
which, together with \eqref{stimadimu}, \eqref{nablamu} and \eqref{lestimedibeta}, yields
\begin{equation}\label{terzaoss}
\mathrm{div}\left(\frac{\mu(\beta-x)}{|x|}\right)= O(1)\quad \text{as $|x|\to 0$}. 
\end{equation}
In conclusion, the thesis follows by combining \eqref{H'}, \eqref{primaoss}, \eqref{secondaoss}, \eqref{terzaoss}, recalling that $\nu= x/|x|$ and taking into account \eqref{mumaggiore}. 
\end{proof} 
Exploiting the previous results we are able to prove the desired estimate from below of the derivative of $\mathcal{N}$, which in turn will allow us to show as byproducts the boundedness from above of $\mathcal{N}$ and the existence of its limit as $r\to 0$.
\begin{lem}\label{lemmaN'}
It holds that 
\begin{equation}\label{regolaritaN}
\mathcal{N}\in W^{1,1}_{\mathrm{loc}}(0,r_0),
\end{equation}
and a.e. in $(0,r_0)$
\begin{equation}\label{N'}
\mathcal{N}'(r)\geq O(r^{\bar{\varepsilon}})\left(\mathcal{N}(r)+\frac{N-1}{2}\right)\quad \text{as $r\to 0^+$},
\end{equation}
where $\bar{\varepsilon}\in (-1,0)$ is defined in \eqref{eps}.
\end{lem}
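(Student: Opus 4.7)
The plan is the classical Almgren computation, adapted to the presence of the matrix $A$ and the weight $\mu$. First I would deal with regularity: since $D,H\in W^{1,1}_{\mathrm{loc}}(0,\tilde r)$ by \eqref{regolD} and \eqref{regolaritadiH}, both functions are absolutely continuous on every $[a,b]\subset(0,r_0]$; by Lemma \ref{lapositivitadiH} the function $H$ is continuous and strictly positive on $(0,r_0]$, hence bounded away from $0$ on any compact subinterval. Therefore $\mathcal{N}=D/H$ is absolutely continuous on each $[a,b]\subset(0,r_0)$, giving \eqref{regolaritaN}, and a.e.\ in $(0,r_0)$
\begin{equation*}
\mathcal{N}'(r)=\frac{D'(r)H(r)-D(r)H'(r)}{H(r)^2}.
\end{equation*}

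Next I would bound the boundary integral in \eqref{D'} from below by a weighted Cauchy--Schwarz inequality: writing the inner product as $\mu^{-1/2}(A\nabla U\cdot\nu)\cdot \mu^{1/2}U$, we get
\begin{equation*}
\left(\int_{\partial B_r}(A\nabla U\cdot\nu)U\,ds\right)^{\!2}\leq\left(\int_{\partial B_r}\frac{1}{\mu}(A\nabla U\cdot\nu)^2\,ds\right)\!\left(\int_{\partial B_r}\mu U^2\,ds\right).
\end{equation*}
Using identity \eqref{utileperdopo} to recognise $\int_{\partial B_r}(A\nabla U\cdot\nu)U\,ds=r^{N-1}D(r)$ and the definition of $H$ to identify $\int_{\partial B_r}\mu U^2\,ds=r^N H(r)$, this rearranges to
\begin{equation*}
\int_{\partial B_r}\frac{1}{\mu}(A\nabla U\cdot\nu)^2\,ds\geq \frac{r^{N-2}D(r)^2}{H(r)}.
\end{equation*}
Substituted into \eqref{D'}, this yields $D'(r)\geq \dfrac{2D(r)^2}{rH(r)}+O(r^{\bar\varepsilon})\bigl(D(r)+\tfrac{N-1}{2}H(r)\bigr)$.

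Then I would combine this with \eqref{H'D}, which gives $D(r)H'(r)=\tfrac{2D(r)^2}{r}+D(r)\,O(H(r))$. The two leading terms $\tfrac{2D(r)^2}{r}$ cancel in $D'H-DH'$, and dividing by $H(r)^2$ produces
\begin{equation*}
\mathcal{N}'(r)\geq O(r^{\bar\varepsilon})\!\left(\mathcal{N}(r)+\frac{N-1}{2}\right)-O(1)\,\mathcal{N}(r).
\end{equation*}
To absorb the stray $O(1)\mathcal{N}(r)$ into the desired form I would use \eqref{Nmaggiore}: since $\mathcal{N}(r)+\tfrac{N-1}{2}>\tfrac{N-1}{4}>0$ on $(0,r_0]$, the elementary bound $|\mathcal{N}(r)|\leq \bigl(\mathcal{N}(r)+\tfrac{N-1}{2}\bigr)+\tfrac{N-1}{2}\leq 3\bigl(\mathcal{N}(r)+\tfrac{N-1}{2}\bigr)$ holds, so $O(1)\mathcal{N}(r)=O(1)\bigl(\mathcal{N}(r)+\tfrac{N-1}{2}\bigr)$, and since $\bar\varepsilon\leq 0$ one has $O(1)\subset O(r^{\bar\varepsilon})$ as $r\to 0^+$. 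This yields \eqref{N'}.

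The main obstacle, in my view, is bookkeeping the error terms rather than any conceptual difficulty: one must track that the perturbation $O(H(r))$ in the formula \eqref{H'D} for $H'$, which is new with respect to \cite{DelFel} and appears because of the non-identity matrix $A$ and the weight $\mu$, contributes precisely the spurious $-O(1)\mathcal{N}(r)$ term, and verify that it can be absorbed using only the a priori lower bound \eqref{Nmaggiore} (not any upper bound on $\mathcal{N}$, which has not been established yet at this point of the paper).
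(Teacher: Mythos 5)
Your proposal is correct and follows essentially the same route as the paper: both arguments reduce to the quotient rule, apply the weighted Cauchy--Schwarz inequality to $\mu^{-1/2}(A\nabla U\cdot\nu)$ and $\mu^{1/2}U$ on $\partial B_r$, invoke \eqref{utileperdopo} to identify the boundary integral with $r^{N-1}D(r)$, and use \eqref{D'} together with \eqref{H'D}; the only organizational difference is that you pre-substitute the Cauchy--Schwarz bound into \eqref{D'} to cancel the $2D^2/(rH)$ terms, while the paper keeps the Cauchy--Schwarz gap as an explicit nonnegative term. Your observation that the stray $O(1)\mathcal{N}(r)$ coming from the $O(H)$ perturbation in \eqref{H'D} must be absorbed using only the a priori lower bound \eqref{Nmaggiore} (and $\bar\varepsilon\le 0$), since the upper bound on $\mathcal{N}$ has not yet been established, is exactly the right caution and is handled correctly.
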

\begin{proof}
\eqref{regolaritaN} follows by assembling the regularity of $D$ with the regularity and the positivity of $H$ (see \eqref{regolD}, \eqref{regolaritadiH} and Lemma \ref{lapositivitadiH} respectively). In order to derive \eqref{N'} we explicitly compute the distributional derivative of $\mathcal{N}$: exploiting \eqref{D'}, \eqref{H'equiv} and \eqref{H'D}, we have that a.e. in $(0,r_0)$
\begin{equation}\label{stimoN'}
\begin{split}
\mathcal{N}'(r)\geq &\frac{2r\left[\left(\int_{\partial B_r}\frac{(A\nabla U\cdot\nu)^2}{\mu}\, ds\right)\left(\int_{\partial B_r} \mu U^2\, ds\right)-\left(\int_{\partial B_r} U(A\nabla U\cdot\nu)\, ds\right)^2\right]}{\left(\int_{\partial B_r} \mu U^2\, ds\right)^2}\\
& + O(r)+O(r^{\bar{\varepsilon}})\left(\mathcal{N}(r)+\frac{N-1}{2}\right) +\frac{ O(r^2)}{r^{N}H(r)} \int_{\partial B_r} (A\nabla U\cdot\nu)U\, ds\quad \text{as $r\to 0$}.
\end{split}
\end{equation}
Using \eqref{H'equiv}, \eqref{H'D}, and recalling the definition of $\mathcal{N}$ given in \eqref{N},  we infer that  a.e. in $(0,r_0)$
\begin{equation*}\label{miserve}
\frac{1}{r^N H(r)}\int_{\partial B_r}(A\nabla U\cdot \nu) U\, ds= \frac{H'(r)}{2H(r)}+O(1)= \frac{\mathcal{N}(r)}{r}+O(1)\quad \text{as $r\to 0$}.
\end{equation*}
Plugging this into \eqref{stimoN'} and applying the Cauchy-Schwarz inequality to $\frac{A\nabla U\cdot\nu}{\sqrt{\mu}}$ and $\sqrt{\mu}U$ as vectors in $L^2(\partial B_r)$, we arrive at
\begin{equation*}
\mathcal{N}'(r)\geq \left(O(r^{\bar{\varepsilon}})+ O(r)\right) \left(\mathcal{N}(r)+\frac{N-1}{2}\right)\quad \text{as $r\to 0^+$ and a.e. in $(0,r_0)$},  
\end{equation*}
which, taking into account that $\bar{\varepsilon}\in (-1,0)$ by \eqref{eps}, implies \eqref{N'}.
\end{proof}
\begin{lem}
There exists a positive constant $C_4>0$ such that 
\begin{equation}\label{Nlimitatadasopra}
\mathcal{N}(r)\leq C_4 \quad \text{for every $r\in (0,r_0)$}.
\end{equation}
\end{lem}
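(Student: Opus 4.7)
The plan is to integrate the differential inequality \eqref{N'} obtained in Lemma \ref{lemmaN'}, taking advantage of the fact that $\bar{\varepsilon}>-1$ makes $r^{\bar{\varepsilon}}$ integrable on $(0,r_0)$, together with the lower bound \eqref{Nmaggiore} which keeps the quantity $\mathcal{N}(r)+\tfrac{N-1}{2}$ strictly positive.

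First I would set
\begin{equation*}
\Phi(r):=\mathcal{N}(r)+\frac{N-1}{2},
\end{equation*}
and observe that by \eqref{Nmaggiore} one has $\Phi(r)>\tfrac{N-1}{4}>0$ for every $r\in(0,r_0]$. Since $\mathcal{N}\in W^{1,1}_{\mathrm{loc}}(0,r_0)$ by \eqref{regolaritaN}, also $\Phi$ belongs to $W^{1,1}_{\mathrm{loc}}(0,r_0)$, and being bounded away from $0$ it follows that $\log\Phi\in W^{1,1}_{\mathrm{loc}}(0,r_0)$ with $(\log\Phi)'=\Phi'/\Phi=\mathcal{N}'/\Phi$ a.e. Interpreting the notation $O(r^{\bar\varepsilon})$ in \eqref{N'} as a quantity bounded in absolute value by $C r^{\bar\varepsilon}$ for some constant $C>0$ and every $r$ sufficiently small, inequality \eqref{N'} rewrites as
\begin{equation*}
\mathcal{N}'(r)\geq -Cr^{\bar\varepsilon}\,\Phi(r)\qquad\text{a.e. in }(0,r_0),
\end{equation*}
which, after dividing by the positive quantity $\Phi(r)$, becomes
\begin{equation*}
(\log\Phi)'(r)\geq -Cr^{\bar\varepsilon}\qquad\text{a.e. in }(0,r_0).
\end{equation*}

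Next I would integrate this inequality on an arbitrary interval $(r,r_0)$ with $r\in(0,r_0)$: since $\log\Phi$ is absolutely continuous on any compact subinterval of $(0,r_0]$, one obtains
\begin{equation*}
\log\Phi(r_0)-\log\Phi(r)\geq -C\int_r^{r_0}s^{\bar\varepsilon}\,ds.
\end{equation*}
Since $\bar\varepsilon\in(-1,0)$ by \eqref{eps}, the function $s\mapsto s^{\bar\varepsilon}$ is integrable near $0$, so
\begin{equation*}
\int_r^{r_0}s^{\bar\varepsilon}\,ds\leq \int_0^{r_0}s^{\bar\varepsilon}\,ds=\frac{r_0^{\bar\varepsilon+1}}{\bar\varepsilon+1}=:M<+\infty
\end{equation*}
uniformly in $r\in(0,r_0)$. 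Hence $\log\Phi(r)\leq\log\Phi(r_0)+CM$ for all such $r$, which yields
\begin{equation*}
\Phi(r)\leq \Phi(r_0)\,e^{CM}\qquad\text{for every }r\in(0,r_0),
\end{equation*}
and therefore
\begin{equation*}
\mathcal{N}(r)\leq \Phi(r_0)\,e^{CM}-\frac{N-1}{2}=:C_4\qquad\text{for every }r\in(0,r_0),
\end{equation*}
as desired. I do not anticipate a real obstacle here: the lower bound \eqref{Nmaggiore} supplies the positivity needed to divide by $\Phi$, the Sobolev regularity \eqref{regolaritaN} justifies the integration, and the condition $\bar\varepsilon>-1$ guarantees that the right-hand side remains uniformly bounded as $r\to 0^+$.
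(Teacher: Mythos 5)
Your proposal is correct and follows essentially the same route as the paper: write the differential inequality as $\mathcal{N}'(\rho)\geq -C\rho^{\bar\varepsilon}\bigl(\mathcal{N}(\rho)+\tfrac{N-1}{2}\bigr)$, use the lower bound \eqref{Nmaggiore} to keep $\mathcal{N}+\tfrac{N-1}{2}$ strictly positive, and integrate over $(r,r_0)$ using the integrability of $\rho^{\bar\varepsilon}$ near the origin. You merely make the Gronwall-type step explicit by passing to $\log\Phi$, whereas the paper leaves that bookkeeping implicit.
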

\begin{proof}
Making use of \eqref{N'}, we have that for a.e. $\rho\in (0,r_0)$
\begin{equation}\label{maggiorediintegrabile}
\mathcal{N}'(\rho)\geq -C_3\, \rho^{\bar{\varepsilon}} \left(\mathcal{N}(\rho)+\frac{N-1}{2}\right) 
\end{equation}
for some $C_3>0$ independent of $r$ (up to take $r_0$ smaller from the beginning, without loss of generality). From this, observing that $$\mathcal{N}'(\rho)= \left(\mathcal{N}(\cdot)+\frac{N-1}{2}\right)'(\rho),$$ taking into consideration \eqref{Nmaggiore} and integrating with respect to $\rho\in(r,r_0) $ for any $r\in (0,r_0)$, we achieve \eqref{Nlimitatadasopra}. 
\end{proof}
\begin{lem}\label{lemmaesistenza}
The limit $\lim_{r\to 0^+}\mathcal{N}(r)$ does exist. Moreover it is finite and nonnegative. 
\end{lem}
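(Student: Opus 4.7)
The plan is to leverage the differential inequality \eqref{N'} from Lemma \ref{lemmaN'}, combined with the two-sided bounds on $\mathcal{N}$ given by \eqref{Nmaggiore} and \eqref{Nlimitatadasopra}, to construct a monotone perturbation of $\mathcal{N}$ that admits a limit as $r\to 0^+$.

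First I would exploit that $\mathcal{N}$ is bounded on $(0,r_0)$. Since $\mathcal{N}(r)+\frac{N-1}{2}$ is uniformly bounded thanks to \eqref{Nmaggiore} and \eqref{Nlimitatadasopra}, \eqref{N'} in Lemma \ref{lemmaN'} becomes the scalar differential inequality
\begin{equation*}
\mathcal{N}'(r)\geq -C_5\, r^{\bar{\varepsilon}}\quad\text{for a.e. }r\in(0,r_0),
\end{equation*}
for some constant $C_5>0$ independent of $r$. Since $\bar{\varepsilon}\in(-1,0)$ by \eqref{eps}, the function $r\mapsto r^{\bar{\varepsilon}}$ is integrable near the origin, so the antiderivative $\Phi(r):=\frac{C_5}{\bar{\varepsilon}+1}r^{\bar{\varepsilon}+1}$ satisfies $\Phi\in W^{1,1}_{\mathrm{loc}}(0,r_0)$ with $\Phi(r)\to 0$ as $r\to 0^+$.

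Next I would set $\tilde{\mathcal{N}}(r):=\mathcal{N}(r)+\Phi(r)$. By \eqref{regolaritaN}, $\tilde{\mathcal{N}}\in W^{1,1}_{\mathrm{loc}}(0,r_0)$ and, by construction,
\begin{equation*}
\tilde{\mathcal{N}}'(r)=\mathcal{N}'(r)+C_5\, r^{\bar{\varepsilon}}\geq 0\quad\text{a.e. in }(0,r_0),
\end{equation*}
so $\tilde{\mathcal{N}}$ is nondecreasing on $(0,r_0)$. In particular $\lim_{r\to 0^+}\tilde{\mathcal{N}}(r)$ exists in $[-\infty,+\infty)$; bound \eqref{Nmaggiore} gives $\tilde{\mathcal{N}}(r)\geq -\frac{N-1}{4}+\Phi(r)$, which is bounded below, so the limit is finite. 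Since $\Phi(r)\to 0$, this yields the existence and finiteness of $\lim_{r\to 0^+}\mathcal{N}(r)$.

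Finally, nonnegativity is essentially for free: estimate \eqref{perdirecheillimiteepositivo}, namely $\mathcal{N}(r)\geq -Cr^\varepsilon$ with $\varepsilon>0$, gives $\lim_{r\to 0^+}\mathcal{N}(r)\geq 0$ by passing to the limit. The only delicate point is to check that the constants implicit in the $O(r^{\bar{\varepsilon}})$ of \eqref{N'} do not depend on $r$ when we use the boundedness of $\mathcal{N}$ to absorb the factor $\mathcal{N}(r)+\frac{N-1}{2}$; this is clear because \eqref{Nlimitatadasopra} supplies a uniform upper bound and \eqref{Nmaggiore} a uniform lower one, both independent of $r\in(0,r_0)$. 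No major obstacle is expected — the whole proof is just integrating the Gronwall-type inequality once one notices that $\bar{\varepsilon}>-1$ makes the perturbation integrable at $0$.
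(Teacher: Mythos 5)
Your proof is correct and follows essentially the same strategy as the paper's: you absorb the bounded factor $\mathcal{N}(r)+\tfrac{N-1}{2}$ into the constant (exactly as the paper does when defining $\xi_1,\xi_2$), exploit $\bar{\varepsilon}>-1$ for integrability near $0$, and conclude via monotonicity plus a vanishing integrable perturbation. The only cosmetic difference is that you package the argument as ``$\mathcal{N}+\Phi$ is nondecreasing'' whereas the paper integrates the decomposition $\mathcal{N}'=\xi_1+\xi_2$ and invokes monotone and dominated convergence; these are the same mechanism.
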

\begin{proof}
From \eqref{regolaritaN} it follows that for every $r\in (0, r_0)$
\begin{equation}\label{assolutacont}
\mathcal{N}(r_0)-\mathcal{N}(r)=\int_r^{r_0} \mathcal{N}'(\rho)\,d\rho.
\end{equation}
Now we write $\mathcal{N}'$ as the sum of two terms, as follows
\begin{equation}\label{N'comesomma}
\mathcal{N}'(\rho) = \xi_1(\rho) + \xi_2(\rho)\quad \text{for a.e. $\rho\in (0,r_0)$}, 
\end{equation}
where
\begin{equation}\label{xi1}
\xi_1(\rho):= \mathcal{N}'(\rho)+C_3\, \rho^{\bar{\varepsilon}} \left(C_4+\frac{N-1}{2}\right)\geq 0,
\end{equation}
as a consequence of \eqref{Nlimitatadasopra} and \eqref{maggiorediintegrabile}, and 
\begin{equation}\label{xi2}
\xi_2(\rho):= - C_3\, \rho^{\bar{\varepsilon}} \left(C_4+\frac{N-1}{2}\right) \in L^1(0, r_0),
\end{equation}
since $\bar{\varepsilon}>-1$. Hence, if we insert \eqref{N'comesomma} into \eqref{assolutacont}, passing to the limit as $r\to 0^+$, we can conclude that $\lim_{r\to 0^+}\mathcal{N}(r)$ does exist, in light of \eqref{xi1} and \eqref{xi2} by applying the monotone convergence theorem and Lebesgue's dominated convergence theorem. In particular such a limit is finite in virtue of \eqref{Nmaggiore} and \eqref{Nlimitatadasopra}, and it is positive thanks to \eqref{perdirecheillimiteepositivo}.
\end{proof}
We set 
\begin{equation}\label{ell}
\ell:= \lim_{r\to 0^+}\mathcal{N}(r).
\end{equation}
and by Lemma \ref{lemmaesistenza} we have that $\ell\in \mathbb{R}_+$.

In the rest of this section we prove some results giving information on the growth of $H$.
\begin{lem}
There exists a positive constant $C_5>0$ such that
\begin{equation}\label{Hquasiralla2elle}
H(r)\leq C_5 r^{2\ell}\quad \text{for all $r\in (0,r_0)$},
\end{equation}
and for any $\sigma>0$ there exists a positive constant $C_6=C_6(\sigma)>0$ such that 
\begin{equation}\label{Hmaggioredisigma}
H(r)\geq C_6 r^{2\ell+\sigma}\quad \text{for all $r\in (0,r_0)$}. 
\end{equation}
\end{lem}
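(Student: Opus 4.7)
The plan is to exploit the relation \eqref{H'D}, namely $H'(r)/H(r)=2\mathcal{N}(r)/r+O(1)$ as $r\to 0^+$, and to integrate it separately against $r^{2\ell}$ and $r^{2\ell+\sigma}$. Everything reduces to quantifying how $\mathcal{N}(r)$ approaches $\ell$.

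As a preliminary step I would derive the pointwise lower bound
\begin{equation*}
\mathcal{N}(r)\geq \ell - C'\,r^{1+\bar{\varepsilon}}\quad\text{for all } r\in(0,r_0),
\end{equation*}
as a quantitative almost-monotonicity consequence of \eqref{N'}. Since $\mathcal{N}+\tfrac{N-1}{2}$ is uniformly bounded on $(0,r_0)$ by \eqref{Nmaggiore} and \eqref{Nlimitatadasopra}, the right-hand side of \eqref{N'} is bounded from below by $-C\rho^{\bar{\varepsilon}}$. Using \eqref{regolaritaN} to legitimise the computation, the map $\rho\mapsto\mathcal{N}(\rho)+\tfrac{C}{1+\bar{\varepsilon}}\,\rho^{1+\bar{\varepsilon}}$ is therefore non-decreasing on $(0,r_0)$; letting $\rho\to 0^+$ and invoking \eqref{ell} produces the desired bound, where $\bar{\varepsilon}>-1$ is essential for both the existence of the compensator and its vanishing at $0$.

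Next I would rewrite \eqref{H'D} as $(\log H(r))'=2\mathcal{N}(r)/r+O(1)$ and integrate between $r$ and $r_0$, obtaining
\begin{equation*}
\log\frac{H(r)}{r^{2\ell}}=\log\frac{H(r_0)}{r_0^{2\ell}}\,-\,2\int_r^{r_0}\frac{\mathcal{N}(\rho)-\ell}{\rho}\,d\rho\,+\,O(1).
\end{equation*}
By the preliminary step the integrand $(\mathcal{N}(\rho)-\ell)/\rho$ is bounded below by $-C'\rho^{\bar{\varepsilon}}$, so its primitive is bounded below uniformly in $r$ since $\bar{\varepsilon}>-1$; this makes the right-hand side bounded above and yields \eqref{Hquasiralla2elle}. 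For \eqref{Hmaggioredisigma}, given $\sigma>0$, I would use \eqref{ell} to pick $r_\sigma\in(0,r_0)$ with $\mathcal{N}(\rho)<\ell+\sigma/4$ for every $\rho<r_\sigma$, so that
\begin{equation*}
\frac{d}{dr}\log\frac{H(r)}{r^{2\ell+\sigma}}=\frac{2(\mathcal{N}(r)-\ell)-\sigma}{r}+O(1)\leq -\frac{\sigma}{2r}+O(1)
\end{equation*}
is strictly negative on some sub-interval $(0,r'_\sigma)$. Hence $\log(H(r)/r^{2\ell+\sigma})$ is non-increasing on $(0,r'_\sigma)$ and stays above its value at $r'_\sigma$; on the compact set $[r'_\sigma,r_0]$ the continuity and strict positivity of $H$ (Lemma \ref{lapositivitadiH}) make $H(r)/r^{2\ell+\sigma}$ bounded below, and the minimum of the two bounds defines $C_6$.

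The main technical hurdle I foresee lies in the preliminary step: the differential inequality \eqref{N'} is one-sided and holds only almost everywhere, and converting it into a clean pointwise estimate that survives the limit $\rho\to 0^+$ requires the Sobolev regularity \eqref{regolaritaN} together with the boundedness of $\mathcal{N}$ on $(0,r_0)$. Once this quantitative rate for $\mathcal{N}(r)-\ell$ is secured, the two statements of the lemma follow by a routine integration of the logarithmic derivative of $H$.
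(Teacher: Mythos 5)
Your proposal is correct and follows essentially the same route as the paper: it derives the quantitative lower bound $\mathcal{N}(\rho)-\ell\geq -\mathrm{const}\,\rho^{1+\bar\varepsilon}$ from \eqref{N'} (your "compensator" argument is just the integrated form of the paper's appeal to $\mathcal{N}(\rho)-\ell=\int_0^\rho\mathcal{N}'$ together with the one-sided bound on $\mathcal{N}'$), and then integrates the logarithmic derivative of $H$ given by \eqref{H'D} on $(r,r_0)$ for the upper bound and on a small interval where $\mathcal{N}<\ell+\sigma/4$ for the lower bound, treating the compact remainder by the continuity and positivity of $H$. The reasoning and the role of $\bar\varepsilon>-1$ match the paper's proof, so no gap.
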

\begin{proof}
We first show \eqref{Hquasiralla2elle}. For this, we first notice that by Lemma \ref{lemmaesistenza} for every $\rho\in (0,r_0)$
\begin{equation}\label{N'seconda}
\mathcal{N}(\rho) -\ell= \int_0^\rho\mathcal{N}'(\tau)\, d\tau,
\end{equation}
with $\ell \in \mathbb{R}_+$ as in \eqref{ell}. 
Moreover from \eqref{xi1} we can deduce that 
\begin{equation*}
\mathcal{N}'(\tau)\geq - C_3 \,\tau^{\bar{\varepsilon}} \left(C_4+\frac{N-1}{2}\right)\quad \text{for a.e. $\tau\in (0,r_0)$}. 
\end{equation*}
Merging this with \eqref{N'seconda}, we obtain that for every $\rho\in (0,r_0)$ 
\begin{equation}\label{Ndirho}
\mathcal{N}(\rho)-\ell \geq -\mathrm{const}\,\rho^{\bar{\varepsilon}+1},
\end{equation}
for some $\mathrm{const}>0$. At this point, we exploit \eqref{H'D} and \eqref{Ndirho} to infer that for a.e. $\rho\in (0,r_0)$
\begin{equation*}
\frac{H'(\rho)}{H(\rho)}\geq \frac{2\ell}{\rho} - \mathrm{const}\, \rho^{\bar{\varepsilon}} \quad \text{as $r\to 0$} 
\end{equation*}
(up to choose $r_0$ smaller from the beginning); then integrating the above inequality over $(r,r_0)$ for all $r\in (0,r_0)$, we get \eqref{Hquasiralla2elle}. 
Now we prove \eqref{Hmaggioredisigma}. To such purpose, we use \eqref{ell} and \eqref{H'D} to claim that for any $\sigma>0$ there exists $r_\sigma>0$ such that for a.e. $\rho\in(0,r_\sigma)$
\begin{equation*}
\frac{H'(\rho)}{H(\rho)} \leq \frac{2\ell+\sigma}{\rho}. 
\end{equation*}
Integrating the above inequality over $(r,r_\sigma)$ with $r\in (0,r_\sigma)$, we deduce that $H(r)\geq\mathrm{const}\, r^{2\ell+\sigma}$ for every $r\in (0,r_\sigma)$ for some $\mathrm{const}>0$. 
The validity of this last inequality for every $r\in[r_\sigma,r_0)$ is a trivial consequence of \eqref{regolaritadiH}. \eqref{Hmaggioredisigma} is thereby proved. 
\end{proof}
\begin{lem}\label{lemmadellimitecheesiste}
The function $H(r)/r^{2\ell}$ admits a finite limit as $r\to 0^+$. 
\end{lem}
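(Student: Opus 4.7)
The plan is to show that the quotient $\phi(r) := H(r)/r^{2\ell}$ can be written as the sum of a monotone nondecreasing function and an absolutely continuous perturbation vanishing at the origin, from which it follows that $\phi$ itself admits a limit as $r \to 0^+$. Finiteness of the limit is then automatic thanks to the upper bound $\phi \leq C_5$ supplied by \eqref{Hquasiralla2elle}.

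As a first step, I would use \eqref{regolaritadiH} together with Lemma \ref{lapositivitadiH}, which guarantees $H > 0$ on $(0, r_0]$, to ensure that $\phi \in W^{1,1}_{\mathrm{loc}}(0, r_0)$. Using \eqref{H'D} and the definition \eqref{N} of $\mathcal{N}$, its derivative is computed a.e.\ on $(0,r_0)$ as
\begin{equation*}
\phi'(r) = \frac{r H'(r) - 2\ell H(r)}{r^{2\ell+1}} = \frac{2(\mathcal{N}(r) - \ell)}{r}\,\phi(r) + O(1)\,\phi(r) \qquad \text{as $r \to 0^+$}.
\end{equation*}
Combining the refined lower bound $\mathcal{N}(r) - \ell \geq -\mathrm{const}\,r^{\bar{\varepsilon}+1}$ coming from \eqref{Ndirho} with the bound $\phi(r) \leq C_5$ from \eqref{Hquasiralla2elle}, I would then infer that
\begin{equation*}
\phi'(r) \geq -\mathrm{const}\,r^{\bar{\varepsilon}} - \mathrm{const} =: -g(r),
\end{equation*}
a.e.\ on $(0, r_0)$, with $g \in L^1(0, r_0)$ since $\bar{\varepsilon} \in (-1,0)$ by \eqref{eps}.

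Setting $G(r) := \int_0^r g(\tau)\,d\tau$, the function $\phi + G$ is then monotone nondecreasing on $(0, r_0]$. Since $\phi \geq 0$ and $G \geq 0$, it is bounded below, so monotonicity yields the existence of $\lim_{r \to 0^+}(\phi(r) + G(r))$ in $[0, +\infty)$; and since the absolute continuity of the Lebesgue integral forces $G(r) \to 0$ as $r \to 0^+$, $\phi(r)$ itself admits a finite nonnegative limit, lying in $[0, C_5]$. The only delicate point I foresee is checking that the division by $r$ in the expression for $\phi'$ does not destroy integrability of the negative part; this is precisely what the extra factor $r^{\bar{\varepsilon}+1}$ (rather than just $r$) in the lower bound \eqref{Ndirho} is designed to guarantee.
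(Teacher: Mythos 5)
Your proof is correct and follows essentially the same route as the paper's: both hinge on the identity $\phi'(r)=\tfrac{2(\mathcal{N}(r)-\ell)}{r}\phi(r)+O(1)\phi(r)$ coming from \eqref{H'D}, the lower bound $\mathcal{N}(r)-\ell\geq -\mathrm{const}\,r^{\bar{\varepsilon}+1}$ of \eqref{Ndirho}, the upper bound $\phi\leq C_5$ of \eqref{Hquasiralla2elle}, and the integrability of $r^{\bar\varepsilon}$ near the origin. You package these ingredients as the classical observation that $\phi+\int_0^r g$ is nondecreasing and bounded, hence convergent, and then strip off the vanishing absolutely continuous correction; the paper instead writes $\mathcal{N}'=\xi_1+\xi_2$, integrates the resulting double integral, and invokes monotone and dominated convergence on the two pieces separately --- a mild difference in bookkeeping, not a genuinely different argument.
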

\begin{proof}
In view of \eqref{Hquasiralla2elle}, it remains to prove the existence of the limit. To this aim, we compute the derivative of $H(\rho)/\rho^{2\ell}$, obtaining that for a.e. $\rho\in (0,r_0)$
\begin{equation}\label{vaintegrata}
\frac{d}{d\rho} \left(\frac{H(\rho)}{\rho^{2\ell}}\right)= \frac{2H(\rho)}{\rho^{2\ell+1}}\left[ \int_0^\rho \mathcal{N}'(\tau)\, d\tau+O(\rho)\right],
\end{equation}
using \eqref{H'D} and \eqref{N'seconda}. Taking advantage of \eqref{N'comesomma}, \eqref{xi1} and \eqref{xi2}, the right-hand side of \eqref{vaintegrata} becomes 
\begin{equation*}
\frac{2H(\rho)}{\rho^{2\ell+1}}\int_0^\rho \xi_1(\tau)\, d\tau - \frac{2H(\rho)}{\rho^{2\ell}} \left(\frac{C_3}{\bar{\varepsilon}+1}\left(C_4+\frac{N-1}{2}\right) \rho^{\bar{\varepsilon}} + O(1)\right).
\end{equation*}
So, integrating \eqref{vaintegrata} over $(r,r_0)$ for any $r\in (0,r_0)$, we get 
\begin{equation}\label{lasteq}
\frac{H(r_0)}{r_0^{2\ell}}-\frac{H(r)}{r^{2\ell}}= \int_r^{r_0}\left[ \frac{2H(\rho)}{\rho^{2\ell+1}}\int_0^\rho \xi_1(\tau)\, d\tau \right]\,d\rho - \int_r^{r_0} \frac{2H(\rho)}{\rho^{2\ell}}\left(C_7\,\rho^\varepsilon+O(1)\right)\, d\rho,
\end{equation}
with $C_7:=\frac{C_3}{\bar{\varepsilon}+1}\left(C_4+\frac{N-1}{2}\right) >0$.
We now focus on the right-hand side of \eqref{lasteq}: the limit as $r\to 0^+$ of the first term does exist as a consequence of the monotone convergence theorem, taking into account \eqref{xi1}; the limit of the second term does exist as well and is finite, applying the Lebesgue dominated convergence theorem, thanks to \eqref{Hquasiralla2elle} and the fact that $\bar{\varepsilon}>-1$. From these considerations and \eqref{lasteq}, we can deduce the existence of the limit of $H(r)/r^{2\ell}$ as $r\to 0^+$. The lemma is thereby proved. 
\end{proof}

\section{The blow-up argument}\label{section4}
In this section we investigate the convergence properties and then we get information on the limit profile as $\lambda\to 0^+$ of the rescaled and renormalized family of functions $\{U^\lambda\}_{\lambda\in (0,r_0)}$ defined as follows: for any $\lambda\in (0,r_0)$
\begin{equation}\label{Ulambda}
U^\lambda: x\in B_{\frac{r_0}{\lambda}}\mapsto U^\lambda(x):= \frac{U(\lambda x)}{\sqrt{H(\lambda)}},  
\end{equation}
where $U\in H^1_{\tilde{\Gamma}}(B_{\tilde{r}})$ is a fixed non-trivial weak solution to \eqref{problU}.
We notice that the family is well-defined thanks to Lemma \ref{lapositivitadiH}. Moreover the word \emph{renormalized} is justified by 
\begin{equation}\label{int=1}
\int_{\partial B_1} \mu(\lambda\cdot) |U^\lambda(\cdot)|^2 ds=1.
\end{equation}
Furthermore, since $U$ weakly solves problem \eqref{problU}, by direct computations we have that $U^\lambda$ is a weak solution to 
\begin{equation}\label{probulambda}
\left\{\begin{aligned}
-\mathrm{div}(A(\lambda\cdot)\nabla U^\lambda(\cdot))&=\lambda^2\tilde
f(\lambda\cdot)U^\lambda(\cdot) &&\text{in}\ B_{\frac{r_0}{\lambda}}\setminus\tilde\Gamma, \\
U^\lambda&=0 && \text{on}\ \tilde\Gamma,
\end{aligned}\right.
\end{equation}
for every fixed $\lambda\in (0,r_0)$. 
This has to be interpreted in the sense that $U^\lambda$ belongs to the space
\begin{equation}\label{lospaziodiulambda}
H^1_{\tilde{\Gamma}}(B_{\frac{r_0}{\lambda}}):=\overline{C^\infty(\overline{B_{\frac{r_0}{\lambda}}}\setminus \tilde{\Gamma})}^{\Vert \cdot\Vert_{H^1(B_{\frac{r_0}{\lambda}})}},
\end{equation}
namely the closure with respect to the $H^1$-norm of the space of all $C^\infty(\overline{B_{\frac{r_0}{\lambda}}})$-functions vanishing in a neighbourhood of $\tilde{\Gamma}$, and it holds that
\begin{equation}\label{formulazdeb}
\int_{B_{\frac{r_0}{\lambda}}}A(\lambda x)\nabla U^\lambda(x)\cdot \nabla v(x)\, dx = \lambda^2\int_{B_{\frac{r_0}{\lambda}}} \tilde{f}(\lambda x ) U^\lambda(x) v(x)\, dx\quad \text{for every $v\in C^\infty_c(B_{\frac{r_0}{\lambda}}\setminus\tilde{\Gamma})$}.
\end{equation}
In the following lemma we prove that the family $\{U^\lambda\}_{\lambda\in (0,r_0)}$ is bounded in $H^1(B_1)$ which is the smallest possible space since as $\lambda\to 0^+$ the ball $B_{\frac{r_0}{\lambda}}$ becomes larger and larger. 
\begin{lem}\label{lemmalimitatainB1} 
There exists a positive constant $M>0$ such that 
\begin{equation*}
\Vert U^\lambda\Vert_{H^1(B_1)} \leq M \quad \text{for every $\lambda\in (0,r_0)$}. 
\end{equation*}
\end{lem}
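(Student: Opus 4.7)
The plan is to split the $H^1(B_1)$-norm into the $L^2$-norm of the gradient and the $L^2$-norm of $U^\lambda$ itself, and control each factor by a combination of the normalization \eqref{int=1}, the Hardy-type inequality \eqref{hardy}, and the uniform upper bound \eqref{Nlimitatadasopra} on the Almgren frequency.

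First, the trace of $U^\lambda$ on $\partial B_1$ is uniformly controlled: from \eqref{int=1} together with \eqref{mumaggiore} applied at the point $\lambda x\in B_{\tilde r}$ (which holds once $\lambda\in(0,r_0)$), I obtain
\begin{equation*}
\int_{\partial B_1}|U^\lambda|^2\,ds\leq 2\int_{\partial B_1}\mu(\lambda\cdot)|U^\lambda|^2\,ds=2.
\end{equation*}

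Next I would bound $\int_{B_1}|\nabla U^\lambda|^2\,dx$. The natural strategy is to use \eqref{utile1} applied to $U$ on the ball $B_\lambda$ (with $\lambda\in(0,r_0)$), which gives
\begin{equation*}
\int_{B_\lambda}|\nabla U|^2\,dy\leq 4\lambda^{N-1}\Bigl(D(\lambda)+\frac{N-1}{4}H(\lambda)\Bigr).
\end{equation*}
Performing the change of variables $y=\lambda x$ one checks, as in the definition \eqref{Ulambda} of the blow-up and as in \eqref{probulambda}, that
\begin{equation*}
\int_{B_1}|\nabla U^\lambda|^2\,dx=\frac{1}{\lambda^{N-1}H(\lambda)}\int_{B_\lambda}|\nabla U|^2\,dy,
\end{equation*}
so dividing the previous bound by $\lambda^{N-1}H(\lambda)$ and recalling the definition \eqref{N} of $\mathcal N$ yields
\begin{equation*}
\int_{B_1}|\nabla U^\lambda|^2\,dx\leq 4\mathcal N(\lambda)+(N-1).
\end{equation*}
By \eqref{Nlimitatadasopra} the right-hand side is bounded by $4C_4+(N-1)$, uniformly in $\lambda\in(0,r_0)$.

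Finally I would control $\int_{B_1}|U^\lambda|^2\,dx$ by combining the two previous estimates via the Hardy-type inequality \eqref{hardy} with $r=1$: since $|x|\leq 1$ in $B_1$,
\begin{equation*}
\int_{B_1}|U^\lambda|^2\,dx\leq\int_{B_1}\frac{|U^\lambda(x)|^2}{|x|^2}\,dx\leq\Bigl(\tfrac{2}{N-1}\Bigr)^2\Bigl(\int_{B_1}|\nabla U^\lambda|^2\,dx+\tfrac{N-1}{2}\int_{\partial B_1}|U^\lambda|^2\,ds\Bigr),
\end{equation*}
and both terms on the right have been bounded above uniformly in $\lambda$. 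Summing the gradient bound and this $L^2$ bound produces the desired constant $M>0$ with $\|U^\lambda\|_{H^1(B_1)}\leq M$ for every $\lambda\in(0,r_0)$.

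The only non-routine input is the availability of \eqref{Nlimitatadasopra}; once that is in hand (and it is, from the preceding section), the whole proof is a short chain of change-of-variables plus the two inequalities \eqref{utile1} and \eqref{hardy}. I expect no serious obstacle beyond keeping track of the scaling factor $\lambda^{N-1}H(\lambda)$ that makes the rescaled energy coincide with $\mathcal N(\lambda)$.
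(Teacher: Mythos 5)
Your proposal is correct and follows essentially the same route as the paper: bound the gradient via the change of variables together with \eqref{unificata}/\eqref{utile1} and the frequency bound \eqref{Nlimitatadasopra}, then bound the $L^2$-norm via the Hardy inequality \eqref{hardy}, the normalization \eqref{int=1}, and the lower bound \eqref{mumaggiore} on $\mu$. The only cosmetic difference is that you extract the trace bound $\int_{\partial B_1}|U^\lambda|^2\,ds\leq 2$ as a separate preliminary step, whereas the paper folds it directly into the Hardy estimate.
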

\begin{proof}
Thanks to \eqref{unificata} and \eqref{serviraancora}, applying a suitable change of variable, we have that 
\begin{equation*}
\frac{1}{4}\lambda^{N-1}H(\lambda)\int_{B_1} |\nabla U^\lambda|^2\, dz\leq \lambda ^{N-1} \left(D(\lambda) + \frac{N-1}{4} H(\lambda)\right). 
\end{equation*}
Hence dividing each member of the above inequality by $\lambda^{N-1}H(\lambda)$ and taking into account \eqref{Nlimitatadasopra}, we deduce that 
\begin{equation}\label{limitatezzadelgradiente}
\Vert\nabla U ^\lambda\Vert_{L^2(B_1)}^2\leq 4C_4+N-1. 
\end{equation}
Moreover from \eqref{hardy} we can infer that 
\begin{equation*}
\left(\frac{N-1}{2}\right)^2 \lambda^{N-1} H(\lambda) \int_{B_1}|U^\lambda|^2\,dx\leq \lambda^{N-1} H(\lambda)\left( \int_{B_1} |\nabla U^\lambda|^2\, dx +N-1\right),
\end{equation*}
exploiting \eqref{mumaggiore} in order to make $H$ appear on the right-hand side. Dividing each member of the last inequality by $\lambda^{N-1}H(\lambda)$ and using \eqref{limitatezzadelgradiente}, we obtain that
\begin{equation}\label{limitatezzaulaminL2}
\Vert U ^\lambda\Vert_{L^2(B_1)}^2\leq \frac{16 C_4}{(N-1)^2}+\frac{8}{N-1}. 
\end{equation}
Combining \eqref{limitatezzadelgradiente} and \eqref{limitatezzaulaminL2}, we arrive at the thesis. 
\end{proof}
The following three lemmas are thought to derive the boundedness of the $L^2(\partial B_1)$-norm of a slightly modified version of $\nabla U^\lambda$ (see Lemma \ref{lemmagradienteboundary} below), which in turn will come into play to establish the convergence-type result of Proposition \ref{propconvergenza} below.

\begin{lem}\label{lemdoublingH}
There exists a positive constant $M_1>0$ such that for every $\lambda\in (0,\frac{r_0}{2})$ and for every $R\in [1,2]$
\begin{equation}\label{doublingH}
M_1^{-1} H(\lambda)\leq H(R\lambda)\leq M_1 H(\lambda).
\end{equation}
\end{lem}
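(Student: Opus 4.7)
The plan is to integrate the logarithmic derivative of $H$ over $[\lambda, R\lambda]$ and bound it uniformly in $R \in [1,2]$ by using the already established bounds on the Almgren quotient $\mathcal N$.

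First, I would rewrite \eqref{H'D} in the equivalent form
\begin{equation*}
\frac{H'(r)}{H(r)} = \frac{2\mathcal N(r)}{r} + O(1) \quad \text{as } r \to 0^+,
\end{equation*}
which holds a.e.\ on $(0, r_0)$ thanks to \eqref{regolaritadiH}, Lemma \ref{lapositivitadiH} (positivity of $H$), and the definition \eqref{N} of $\mathcal N$. Since $H$ is positive and in $W^{1,1}_{\mathrm{loc}}$, the function $\log H$ is absolutely continuous on compact subintervals of $(0, r_0]$ with derivative $H'/H$ a.e.

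Next, I would invoke the two-sided bound on $\mathcal{N}$: by \eqref{Nmaggiore} and \eqref{Nlimitatadasopra},
\begin{equation*}
-\tfrac{N-1}{4} < \mathcal N(r) \leq C_4 \quad \text{for every } r \in (0, r_0).
\end{equation*}
Combining this with the previous display yields a constant $K > 0$ (depending only on $N$, $C_4$, and the implicit constant in the $O(1)$ term) such that, a.e.\ on $(0, r_0)$,
\begin{equation*}
-\frac{N-1}{2r} - K \;\leq\; \frac{H'(r)}{H(r)} \;\leq\; \frac{2C_4}{r} + K.
\end{equation*}

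Finally, I would fix $\lambda \in (0, r_0/2)$ and $R \in [1,2]$, so that $[\lambda, R\lambda] \subset (0, r_0]$, and integrate:
\begin{equation*}
\log\frac{H(R\lambda)}{H(\lambda)} = \int_\lambda^{R\lambda} \frac{H'(r)}{H(r)}\,dr.
\end{equation*}
The upper bound gives
\begin{equation*}
\log\frac{H(R\lambda)}{H(\lambda)} \leq 2C_4 \log R + K(R-1)\lambda \leq 2C_4 \log 2 + K r_0,
\end{equation*}
and the lower bound gives
\begin{equation*}
\log\frac{H(R\lambda)}{H(\lambda)} \geq -\frac{N-1}{2}\log R - K(R-1)\lambda \geq -\frac{N-1}{2}\log 2 - K r_0.
\end{equation*}
Setting
\begin{equation*}
M_1 := \max\left\{2^{2C_4}\, e^{K r_0},\; 2^{(N-1)/2}\, e^{K r_0}\right\}
\end{equation*}
then yields \eqref{doublingH} uniformly in $\lambda \in (0, r_0/2)$ and $R \in [1,2]$.

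There is essentially no obstacle here beyond making sure the $O(1)$ remainder in \eqref{H'D} is genuinely bounded on $(0, r_0)$ (which it is, by the derivation of \eqref{H'D} from \eqref{primaoss}--\eqref{terzaoss} and the uniform bound \eqref{terzaoss}); once this is in place, the doubling property is a purely one-variable integration of a logarithmic-derivative bound.
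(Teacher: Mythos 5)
Your proof is correct and takes essentially the same route as the paper: both start from \eqref{H'D} to write $H'/H = 2\mathcal{N}(r)/r + O(1)$, bound $\mathcal{N}$ using \eqref{Nmaggiore} and \eqref{Nlimitatadasopra}, and integrate the resulting estimate over $[\lambda,R\lambda]$. The only cosmetic difference is that the paper absorbs the $O(1)$ remainder into the $1/r$-type bounds by shrinking $r_0$, whereas you keep it as a separate $K$ term and bound its integral by $K(R-1)\lambda\leq Kr_0$; both give a uniform constant.
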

\begin{proof}
Using \eqref{H'D}, \eqref{Nmaggiore} and \eqref{Nlimitatadasopra}, we can deduce that there exist two positive constants $c_1,c_2>0$ such that 
\begin{equation*}
-\frac{c_1}{r}\leq\frac{H'(r)}{H(r)}\leq \frac{c_2}{r} 
\end{equation*}
for a.e. $r\in (0,r_0)$, up to select $r_0$ smaller from the beginning. Integrating over $(\lambda,R\lambda)$ with $R\in (1,2]$ and $\lambda\in (0,\frac{r_0}{R})$, we obtain that 
\begin{equation*}
2^{-c_1}\leq \frac{H(R\lambda)}{H(\lambda)}\leq 2^{c_2}.
\end{equation*}
The above inequality still holds if $R=1$. So \eqref{doublingH} follows after observing that $(0,\frac{r_0}{2})\subseteq (0,\frac{r_0}{R})$.
\end{proof}
\begin{lem}\label{Lemmaprecedente}
Let $M_1>0$ be as in Lemma \ref{doublingH}. Then for every $\lambda\in (0,\frac{r_0}{2})$ and for every $R\in [1,2]$ it holds that
\begin{equation*}
\int_{B_R}  |U^\lambda|^2\, dx \leq M_1 2^{N+1}\int_{B_1}  |U^{R\lambda}|^2\,dx,
\end{equation*}
and
\begin{equation*}\label{s}
\int_{B_R}  |\nabla U^\lambda|^2\, dx \leq M_1 2^{N-1}\int_{B_1}  |\nabla U^{R\lambda}|^2\,dx.
\end{equation*}
\end{lem}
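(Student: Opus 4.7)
The plan is to exploit the rescaling definition in \eqref{Ulambda} and perform a change of variables to relate $U^\lambda$ evaluated on $B_R$ with $U^{R\lambda}$ evaluated on $B_1$; the doubling property \eqref{doublingH} will then take care of the height factor $H(R\lambda)/H(\lambda)$.

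First I would rewrite $U^\lambda$ on $B_R$ in terms of $U^{R\lambda}$ on $B_1$. From \eqref{Ulambda} we have $U^\lambda(x)=U(\lambda x)/\sqrt{H(\lambda)}$, and substituting $\lambda x = (R\lambda)(x/R)$ gives the pointwise identity
\begin{equation*}
U^\lambda(x)=\sqrt{\frac{H(R\lambda)}{H(\lambda)}}\,U^{R\lambda}\!\left(\frac{x}{R}\right),\qquad x\in B_R,
\end{equation*}
so that, differentiating and using the chain rule, $\nabla U^\lambda(x)=\frac{1}{R}\sqrt{H(R\lambda)/H(\lambda)}\,\nabla U^{R\lambda}(x/R)$.

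Then I would perform the linear change of variables $y=x/R$ (whose Jacobian determinant in $\mathbb{R}^{N+1}$ is $R^{N+1}$) in both integrals. For the $L^2$-norm this yields
\begin{equation*}
\int_{B_R}|U^\lambda(x)|^2\,dx=R^{N+1}\,\frac{H(R\lambda)}{H(\lambda)}\int_{B_1}|U^{R\lambda}(y)|^2\,dy,
\end{equation*}
and for the gradient term the extra factor $R^{-2}$ from the chain rule combines with the Jacobian to produce
\begin{equation*}
\int_{B_R}|\nabla U^\lambda(x)|^2\,dx=R^{N-1}\,\frac{H(R\lambda)}{H(\lambda)}\int_{B_1}|\nabla U^{R\lambda}(y)|^2\,dy.
\end{equation*}

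Finally, using $R\in[1,2]$ to bound $R^{N+1}\leq 2^{N+1}$ and $R^{N-1}\leq 2^{N-1}$, and applying Lemma \ref{lemdoublingH} to get $H(R\lambda)/H(\lambda)\leq M_1$ (valid for $\lambda\in(0,r_0/2)$ and $R\in[1,2]$), both inequalities follow at once. There is no real obstacle here: the computation is essentially a rescaling identity, and the only non-trivial input—the fact that $H(R\lambda)$ and $H(\lambda)$ are comparable uniformly in $R\in[1,2]$—has already been provided by the previous lemma.
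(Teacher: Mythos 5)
Your proof is correct and matches exactly what the paper indicates (the paper omits the argument, referring to \cite[Lemma 5.3]{DelFel}, but notes it consists of suitable changes of variable together with Lemma \ref{lemdoublingH}, which is precisely what you carried out). The rescaling identity $U^\lambda(x)=\sqrt{H(R\lambda)/H(\lambda)}\,U^{R\lambda}(x/R)$, the Jacobian factor $R^{N+1}$, the chain-rule factor $R^{-2}$ for the gradient, the bounds $R^{N+1}\leq 2^{N+1}$, $R^{N-1}\leq 2^{N-1}$, and the doubling estimate $H(R\lambda)/H(\lambda)\leq M_1$ are all used correctly.
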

\begin{proof}
We omit the proof since one can proceed exactly as in the proof of \cite[Lemma 5.3]{DelFel}, using Lemma \ref{lemdoublingH} and applying suitable changes of variable. 
\end{proof}
\begin{lem}\label{lemmarlambda}
Let $U^\lambda$ be as in \eqref{Ulambda} with $\lambda\in (0,r_0)$. Then there exist $M_2>0$ and $\bar{\lambda}>0$ such that for every $\lambda\in (0,\bar{\lambda})$ there exists $R_\lambda
\in [1,2]$ such that 
\begin{equation*}\label{M_2}
\int_{\partial B_{R_\lambda}}| \nabla U^\lambda|^2\,ds\leq M_2\int_{ B_{R_\lambda}} |\nabla U^\lambda| ^2\,dx.
\end{equation*}
\end{lem}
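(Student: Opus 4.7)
The strategy is a standard coarea and mean-value argument on the spherical shell $B_2\setminus B_1$, combined with a compactness/contradiction argument that provides a uniform lower bound for $\int_{B_1}|\nabla U^\lambda|^2\,dx$.

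\emph{Upper bound via coarea.} For every $\lambda\in(0,r_0/2)$ the coarea formula yields
\begin{equation*}
\int_1^2\left(\int_{\partial B_R}|\nabla U^\lambda|^2\,ds\right)dR=\int_{B_2\setminus B_1}|\nabla U^\lambda|^2\,dx\leq \int_{B_2}|\nabla U^\lambda|^2\,dx,
\end{equation*}
so there exists $R_\lambda\in[1,2]$ with $\int_{\partial B_{R_\lambda}}|\nabla U^\lambda|^2\,ds\leq \int_{B_2}|\nabla U^\lambda|^2\,dx$. Applying Lemma \ref{Lemmaprecedente} with $R=2$ and then Lemma \ref{lemmalimitatainB1} to the rescaled function $U^{2\lambda}$ (which is admissible since $2\lambda<r_0$) furnishes the $\lambda$-uniform upper bound
\begin{equation*}
\int_{\partial B_{R_\lambda}}|\nabla U^\lambda|^2\,ds\leq M_1\,2^{N-1}\int_{B_1}|\nabla U^{2\lambda}|^2\,dx\leq M_1\,2^{N-1}M^2=:K.
\end{equation*}

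\emph{Uniform lower bound.} Since $R_\lambda\geq 1$, it is enough to show $\liminf_{\lambda\to 0^+}\int_{B_1}|\nabla U^\lambda|^2\,dx>0$. Suppose not: then there exists $\lambda_n\to 0^+$ with $\int_{B_1}|\nabla U^{\lambda_n}|^2\,dx\to 0$. By Lemma \ref{lemmalimitatainB1} the family $\{U^{\lambda_n}\}$ is bounded in $H^1(B_1)$, so up to a subsequence $U^{\lambda_n}\rightharpoonup V$ weakly in $H^1(B_1)$ for some $V\in H^1(B_1)$, and $U^{\lambda_n}\to V$ strongly in $L^2(\partial B_1)$ by the compactness of the trace embedding $H^1(B_1)\hookrightarrow L^2(\partial B_1)$. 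Weak lower semicontinuity of the Dirichlet integral forces $\int_{B_1}|\nabla V|^2\,dx=0$, and since $B_1\setminus\tilde\Gamma$ is connected (the half-hyperplane $\tilde\Gamma$ does not disconnect the ball, because its edge lies inside $B_1$ and one can walk around it), $V$ is a.e.\ constant on $B_1$. Each $U^{\lambda_n}|_{B_1}$ lies in the $H^1$-closure of smooth functions vanishing near $\tilde\Gamma\cap B_1$, so continuity of the trace under weak $H^1$-convergence gives that $V$ has vanishing trace on $\tilde\Gamma\cap B_1$; since $\tilde\Gamma\cap B_1$ has positive $N$-dimensional Hausdorff measure, the constant value of $V$ must be zero. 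On the other hand, the normalization \eqref{int=1} combined with the pointwise bound $\mu\leq 2$ on $B_{\tilde r}$ (a direct consequence of \eqref{normadiA} and the definition \eqref{mu}) yields $\int_{\partial B_1}|U^{\lambda_n}|^2\,ds\geq 1/2$, and strong convergence on $\partial B_1$ then forces $\int_{\partial B_1}|V|^2\,ds\geq 1/2$, contradicting $V\equiv 0$.

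Hence there exist $\bar\lambda\in(0,r_0/2]$ and $c>0$ such that $\int_{B_{R_\lambda}}|\nabla U^\lambda|^2\,dx\geq\int_{B_1}|\nabla U^\lambda|^2\,dx\geq c$ for every $\lambda\in(0,\bar\lambda)$, and choosing $M_2:=K/c$ closes the argument. The main obstacle is the lower-bound step: one cannot invoke $\ell>0$ here without circularity, so the qualitative information must come from compactness and from the geometric properties of $\tilde\Gamma\cap B_1$ (connectedness of the complement plus positive $N$-measure of the trace set).
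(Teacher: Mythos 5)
Your proof is correct, and the heart of it (the compactness argument establishing $\liminf_{\lambda\to 0^+}\int_{B_1}|\nabla U^\lambda|^2\,dx>0$ by combining weak lower semicontinuity, the constancy of the weak limit, the vanishing trace on $\tilde\Gamma$, and the normalization \eqref{int=1} via $\mu\leq 2$) is precisely the argument the paper also uses at the end of its proof of this lemma. Where you differ is in how you reduce the statement to that lower bound. The paper negates the statement directly: if for every $M_2$ one has $g_\lambda'(r)>M_2\,g_\lambda(r)$ a.e.\ on $(1,2)$ along some $\lambda_n\to 0^+$, then integrating the logarithmic derivative over $(1,2)$ gives $g_{\lambda_n}(2)\geq e^{M_2}g_{\lambda_n}(1)$, whence $\liminf g_\lambda(1)\leq e^{-M_2}\limsup g_\lambda(2)$ and, sending $M_2\to\infty$, $\liminf g_\lambda(1)=0$ --- which the compactness step then contradicts. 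You instead never touch the logarithmic derivative: you run the coarea/mean-value argument once to produce $R_\lambda$ with a uniform upper bound $K$ on the boundary integral (via Lemmas \ref{Lemmaprecedente} and \ref{lemmalimitatainB1}), prove the lower bound $c>0$ on $\int_{B_1}|\nabla U^\lambda|^2$ directly, and set $M_2:=K/c$. Your route is a bit more transparent and even yields an explicit constant, at no extra cost; both routes ultimately lean on the same compactness step. One small inaccuracy worth noting: you invoke the connectedness of $B_1\setminus\tilde\Gamma$ to conclude that the weak limit $V$ is constant, but since $V\in H^1(B_1)$ with $\nabla V=0$ a.e.\ in the full ball, the connectedness of $B_1$ itself already gives constancy; the topology of $B_1\setminus\tilde\Gamma$ is a red herring here (it would only matter if $V$ were merely in $H^1$ of the slit domain). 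This does not affect the validity of the conclusion.
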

\begin{proof}
For every fixed $\lambda\in (0,\frac{r_0}{2})$ the function 
\begin{equation}\label{glam}
g_\lambda: r\mapsto g_\lambda(r):= \int_{B_r}|\nabla U^\lambda(x)|^2\,dx
\end{equation}
is absolutely continuous in $[0,2]$ and thus
\begin{equation*}
g_\lambda'(r)= \int_{\partial B_r}|\nabla U^\lambda(x)|^2\,ds
\end{equation*}
in a distributional sense and a.e. in $(0,2)$. Now we assume by contradiction that for every $M_2>0$ there exists a sequence $\lambda_n\to 0^+$ such that $g_{\lambda_n}'(r)>M_2\, g_{\lambda_n}(r)$ a.e. in $(0,2)$ for every $n\geq 1$; so, integrating with respect to $r$, we deduce that $g_{\lambda_n}(2)> M_2\, g_{\lambda_n}(1)$ for every $n\geq 1$. From this we can infer that
\begin{equation}\label{passoallimitediM2}
\liminf_{\lambda\to 0^+}g_\lambda(1)\leq e^{-M_2}\limsup_{\lambda\to 0^+} g_\lambda(2),
\end{equation} 
where the $\limsup$ in the right-hand side is less than $+\infty$ as a consequence of the boundedness of $\{U^\lambda\}_{\lambda\in (0,r_0/2)}$ in $H^1(B_2)$ (this comes from Lemmas \ref{lemmalimitatainB1} and \ref{Lemmaprecedente}). Thus, passing to the limit as $M_2\to \infty$ in \eqref{passoallimitediM2} and recalling the definition of $g_\lambda$ given in \eqref{glam}, we get that 
\begin{equation}\label{liminfzero}
\liminf_{\lambda\to 0^+} \int_{B_1}|\nabla U^\lambda(x)|^2\, dx =0.
\end{equation}
Now we claim that there exists a sequence $\lambda_n\to 0^+$ such that $U^{\lambda_n}\rightharpoonup W$ in $H^1(B_1)$ for some $W\in H^1(B_1)$ and in addition 
\begin{equation*}
\lim_{n\to\infty}\int_{B_1}|\nabla U^{\lambda_n}(x)|^2\, dx =0.
\end{equation*}
This can be deduced by combining \eqref{liminfzero} and Lemma \ref{lemmalimitatainB1}. In particular, this and the weak lower semicontinuity of $L^2$-norm imply that 
\begin{equation*}
\int_{B_1} |\nabla W(x)|^2\, dx=0.
\end{equation*}
Hence we have that $W$ is equal to a non-zero constant since
\begin{equation}\label{dinuovointuga1}
\int_{\partial B_1}|W|^2\, ds=1,
\end{equation}
which is a consequence of \eqref{int=1} and the compactness of the trace operator \eqref{traceoperator}. We are almost done: indeed 
we notice that since $U^{\lambda_n}$ is in the space defined in \eqref{lospaziodiulambda} then $U^{\lambda_n}$ belongs to 
\begin{equation}\label{dovetsaubar}
\{v\in H^1(B_1):v=0\ \text{on $\tilde{\Gamma}$ in a trace sense}\},
\end{equation}
which is weakly closed in $H^1(B_1)$; thus necessarily $W\equiv 0$ in $B_1$, producing a contradiction with \eqref{dinuovointuga1}. 
\end{proof}
\begin{lem}\label{lemmagradienteboundary}
There exists a positive constant $M_3>0$ such that for every $\lambda\in (0, \mathrm{min}\{\bar{\lambda},\frac{r_0}{2} \})$
\begin{equation*}
\int_{\partial B_1} |\nabla U^{\lambda R_\lambda}| ^2\, ds\leq M_3,
\end{equation*} 
being $R_\lambda\in [1,2]$ as in Lemma \ref{lemmarlambda}.
\end{lem}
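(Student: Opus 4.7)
The plan is to reduce the claim to the three preparatory lemmas (Lemma \ref{lemmarlambda}, Lemma \ref{Lemmaprecedente}, Lemma \ref{lemdoublingH}) and to the global bound of Lemma \ref{lemmalimitatainB1}, via a direct change of variables that converts an integral over $\partial B_1$ in the variable $x$ (for the function $U^{\lambda R_\lambda}$) into an integral over $\partial B_{R_\lambda}$ in the variable $y=R_\lambda x$ (for the function $U^\lambda$). Since $R_\lambda$ is precisely the radius produced by Lemma \ref{lemmarlambda}, this change of variables brings the expression into a form to which that lemma applies.

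First I would exploit the definition \eqref{Ulambda} to write, for $x\in B_{r_0/(\lambda R_\lambda)}$,
\begin{equation*}
U^{\lambda R_\lambda}(x)=\sqrt{\frac{H(\lambda)}{H(\lambda R_\lambda)}}\,U^\lambda(R_\lambda x),
\qquad
\nabla U^{\lambda R_\lambda}(x)=R_\lambda\sqrt{\frac{H(\lambda)}{H(\lambda R_\lambda)}}\,\nabla U^\lambda(R_\lambda x).
\end{equation*}
Performing then the substitution $y=R_\lambda x$, noting that $\partial B_1$ is mapped to $\partial B_{R_\lambda}$ and that the surface measure transforms as $ds(x)=R_\lambda^{-N}ds(y)$, I would obtain the identity
\begin{equation*}
\int_{\partial B_1}|\nabla U^{\lambda R_\lambda}|^2\,ds
=R_\lambda^{2-N}\,\frac{H(\lambda)}{H(\lambda R_\lambda)}\int_{\partial B_{R_\lambda}}|\nabla U^\lambda|^2\,ds.
\end{equation*}

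Next I would chain the three estimates. Since $R_\lambda\in[1,2]$ and $N\geq 2$, the prefactor $R_\lambda^{2-N}$ is at most $1$; by Lemma \ref{lemdoublingH} applied with $R=R_\lambda\in[1,2]$, the ratio $H(\lambda)/H(\lambda R_\lambda)$ is bounded by $M_1$; by Lemma \ref{lemmarlambda} the boundary integral on the right is bounded by $M_2\int_{B_{R_\lambda}}|\nabla U^\lambda|^2\,dx$; by Lemma \ref{Lemmaprecedente} the latter is controlled by $M_1 2^{N-1}\int_{B_1}|\nabla U^{\lambda R_\lambda}|^2\,dx$; and finally Lemma \ref{lemmalimitatainB1} (applied to the parameter $\lambda R_\lambda\in(0,r_0)$, which is admissible since $\lambda<r_0/2$ and $R_\lambda\leq 2$) bounds this last integral by $M^2$. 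Composing the constants yields
\begin{equation*}
\int_{\partial B_1}|\nabla U^{\lambda R_\lambda}|^2\,ds\leq M_1^2\,M_2\,2^{N-1}M^2=:M_3,
\end{equation*}
independently of $\lambda$, which is the desired conclusion.

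No serious obstacle is expected: the argument is essentially bookkeeping of rescalings, provided the constraint $\lambda<\min\{\bar\lambda,r_0/2\}$ is enforced so that the three preparatory lemmas all apply simultaneously (Lemma \ref{lemmarlambda} needs $\lambda<\bar\lambda$, Lemmas \ref{lemdoublingH} and \ref{Lemmaprecedente} need $\lambda<r_0/2$, and $\lambda R_\lambda<r_0$ is automatic from $R_\lambda\leq 2$). The only mildly delicate point is to verify the correct Jacobian factor $R_\lambda^{-N}$ for the surface measure and the factor $R_\lambda^2$ from the gradient rescaling, so that the resulting $R_\lambda^{2-N}$ can be discarded using $R_\lambda\geq 1$ and $N\geq 2$.
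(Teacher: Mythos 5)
Your proof is correct and follows precisely the route the paper indicates: the rescaling identity relating $\nabla U^{\lambda R_\lambda}$ on $\partial B_1$ to $\nabla U^\lambda$ on $\partial B_{R_\lambda}$, chained with Lemmas \ref{lemdoublingH}, \ref{lemmarlambda}, \ref{Lemmaprecedente} and the uniform $H^1(B_1)$ bound of Lemma \ref{lemmalimitatainB1}. The paper merely defers to \cite[Lemma 5.5]{DelFel} for these details, and your write-up is a complete and accurate version of that intended argument.
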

\begin{proof}
We pass over the proof since one can reason precisely as in the proof of \cite[Lemma 5.5]{DelFel}  taking advantage of Lemmas \ref{lemdoublingH}, \ref{Lemmaprecedente} and \ref{lemmarlambda} and applying suitable changes of variable. 
\end{proof}
For future reference, it is useful to write the following lemma which is a straightforward consequence of Lemma \ref{lemmagradienteboundary}. 
\begin{lem}\label{lemmaispirazione}
If $\lambda_n\to 0^+$, then there exist a subsequence $\{\lambda_{n_k}\}_{k\geq 1}$ and a $L^2(\partial B_1)$- function $h$ such that 
\begin{equation*}\label{convergenzaconA}
A(\lambda_{n_k}R_{\lambda_{n_k}}\cdot)\nabla U^{\lambda_{n_k}R_{\lambda_{n_k}}}\cdot\nu\rightharpoonup h\quad \text{in $L^2(\partial B_1)$ as $k\to \infty$},
\end{equation*}
with $R_{\lambda_{n_k}}\in [1,2]$ as in Lemma \ref{lemmarlambda}. 
\end{lem}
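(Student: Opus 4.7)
The statement is a clean application of weak sequential compactness in the Hilbert space $L^2(\partial B_1)$, so the plan is very short. First I would verify that the sequence $\{A(\lambda_n R_{\lambda_n}\cdot)\nabla U^{\lambda_n R_{\lambda_n}}\cdot\nu\}_{n\geq 1}$ is uniformly bounded in $L^2(\partial B_1)$; then the existence of $\{\lambda_{n_k}\}$ and $h$ will follow from Banach--Alaoglu (equivalently, from the weak sequential compactness of bounded sets in reflexive spaces).

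For the uniform $L^2(\partial B_1)$-bound, the key input has already been secured. Since $\lambda_n\to 0^+$, there exists $n_0$ such that $\lambda_n\in (0,\min\{\bar{\lambda},r_0/2\})$ for every $n\geq n_0$, so Lemma~\ref{lemmagradienteboundary} applies and yields
\[
\int_{\partial B_1}|\nabla U^{\lambda_n R_{\lambda_n}}|^2\,ds\leq M_3\qquad\text{for every }n\geq n_0.
\]
Coupling this with the pointwise operator-norm bound $\Vert A(x)\Vert\leq 2$ supplied by \eqref{normadiA}, together with the Cauchy--Schwarz inequality applied to $A\nabla U\cdot\nu$, gives
\[
\int_{\partial B_1}\bigl|A(\lambda_n R_{\lambda_n} x)\nabla U^{\lambda_n R_{\lambda_n}}(x)\cdot\nu(x)\bigr|^2\,ds(x)\leq 4M_3
\]
for all $n\geq n_0$.

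Consequently the sequence in question lies in a bounded subset of the reflexive Hilbert space $L^2(\partial B_1)$, so by standard weak compactness one can extract a subsequence $\{\lambda_{n_k}\}$ such that $A(\lambda_{n_k}R_{\lambda_{n_k}}\cdot)\nabla U^{\lambda_{n_k}R_{\lambda_{n_k}}}\cdot\nu\rightharpoonup h$ in $L^2(\partial B_1)$ for some $h\in L^2(\partial B_1)$, as required.

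There is no genuine obstacle: all the difficulty in this result was absorbed into Lemma~\ref{lemmagradienteboundary} (where the uniform trace bound on $\nabla U^{\lambda R_\lambda}$ was obtained using the doubling property of $H$ and the lower/upper bounds on the Almgren quotient); here one simply packages that bound together with the uniform ellipticity/boundedness of $A$ and invokes weak compactness. The role of this lemma in what follows is evidently to provide a candidate boundary trace $h$ for the normal derivative of the blow-up profile, which will subsequently be identified via the passage to the limit in the weak formulation \eqref{formulazdeb} of the rescaled equation.
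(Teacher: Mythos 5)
Your argument is correct, and the core mechanism is the same as the paper's: Lemma~\ref{lemmagradienteboundary} supplies the uniform $L^2(\partial B_1)$ bound and weak sequential compactness does the rest. The difference lies in the decomposition. You bound the full quantity $A(\lambda_n R_{\lambda_n}\cdot)\nabla U^{\lambda_n R_{\lambda_n}}\cdot\nu$ directly, using \eqref{normadiA} (the uniform operator-norm bound $\Vert A\Vert\leq 2$), and then invoke weak compactness once. The paper instead first extracts a weak $L^2(\partial B_1)$ limit of the \emph{plain} normal derivative $\partial U^{\lambda_{n_k}R_{\lambda_{n_k}}}/\partial\nu$, and then uses \eqref{stimadiA}, namely $A(x)-\mathrm{Id}_{N+1}=O(|x|)$, to show that the matrix correction contributes a term of order $O(\lambda_{n_k}R_{\lambda_{n_k}})\cdot M_3\Vert\psi\Vert_{L^2(\partial B_1)}$, which vanishes. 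Both routes establish the lemma as stated. Your version is shorter and more economical; the paper's version carries the extra (and harmless) information that the limit $h$ coincides with the weak limit of the unweighted normal derivative, which makes the role of $h$ as a candidate co-normal trace of the blow-up profile $\overline{U}$ slightly more transparent but is not needed for the statement itself. One tiny point of bookkeeping: you correctly note that the bound from Lemma~\ref{lemmagradienteboundary} is only available for $\lambda$ small, so it holds for $n\geq n_0$; this is immaterial for extracting a subsequence.
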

\begin{proof}
Let $\lambda_n\to 0^+$. From Lemma \ref{lemmagradienteboundary} we can deduce that there exist a subsequence $\{\lambda_{n_k}\}_{k\geq 1}$ and a $L^2(\partial B_1)$- function $h$ such that for any $\psi\in L^2(\partial B_1)$
\begin{equation*}
\int_{\partial B_1}\frac{\partial U^{\lambda_{n_k}R_{\lambda_{n_k}}}}{\partial\nu}\psi\, ds\to \int_{\partial B_1} h \psi\, ds\quad \text{as $k\to \infty$},
\end{equation*} 
being $R_{\lambda_{n_k}}\in [1,2]$ chosen as in Lemma \ref{lemmarlambda}.
Summing this fact with \eqref{stimadiA}, we have that
\begin{equation*}
\begin{split}
\int_{\partial B_1}(&A(\lambda_{n_k}R_{\lambda_{n_k}}\cdot)\nabla U^{\lambda_{n_k}R_{\lambda_{n_k}}}\cdot\nu)\psi\, ds \\
&= \int_{\partial B_1} \frac{\partial U^{\lambda_{n_k}R_{\lambda_{n_k}}}}{\partial\nu}\psi\, ds+ O(\lambda_{n_k}R_{\lambda_{n_k}}) \int_{\partial B_1}  (\nabla U^{\lambda_{n_k}R_{\lambda_{n_k}}}\cdot \nu)\psi\, ds\\
&\to \int_{\partial B_1} h\psi\, ds \quad \text{as $k\to \infty$},
\end{split}
\end{equation*}
where in addition we have used that 
\begin{equation*}
\begin{split}
\left|\int_{\partial B_1}  (\nabla U^{\lambda_{n_k}R_{\lambda_{n_k}}}\cdot \nu)\psi\, ds\right|&\leq M_3\left(\int_{\partial B_1} |\psi|^2\, ds\right)^{\frac{1}{2}},
\end{split}
\end{equation*}
by the H\"{o}lder inequality and thanks to Lemma \ref{lemmagradienteboundary}. The proof is thereby complete. 
\end{proof}

\begin{lem}\label{illemmaaggiunto}
Let $\ell$ be as in \eqref{ell}.
For any sequence $\lambda_n\to 0^+$, given $R_{\lambda_n}\in [1,2]$ as in Lemma \ref{lemmarlambda}, there exists a subsequence $\{\lambda_{n_k}\}_{k\geq 1}$ and an homogeneous function $\overline{U}\in H^1(B_1)$ of degree $\ell$, i.e.
\begin{equation}\label{eomogenea}
\overline{U}(x)=|x|^\ell \overline{U}\left(\frac{x}{|x|}\right)\quad \text{for all $x\in B_1$},
\end{equation}
such that 
\begin{equation}\label{perintero}
U^{\lambda_{n_k} R_{\lambda_{n_k}}}\to \overline{U} \quad \text{in $H^1(B_1)$ as $k\to \infty$}.
\end{equation}
Moreover $\Psi:= \overline{U}|_{\mathbb{S}^N}$ is an $L^2$-normalized eigenfunction of problem \eqref{problagliautovalori} associated with $\ell(\ell+N-1)$.
\end{lem}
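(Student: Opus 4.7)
The plan is to combine $H^1$-weak compactness with the Almgren monotonicity machinery developed in Section~\ref{section3}: a weakly convergent subsequence yields a candidate limit $\overline U$; Lemma~\ref{lemmaispirazione} and a Green-type identity upgrade the convergence to strong $H^1$; the constancy of the rescaled frequency function at the value $\ell$ then forces $\overline U$ to be homogeneous of degree $\ell$, so that its trace on $\mathbb{S}^N$ is the sought eigenfunction.

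\textbf{Weak limit.} By Lemmas~\ref{lemmalimitatainB1} and~\ref{Lemmaprecedente} the family $\{U^{\lambda_n R_{\lambda_n}}\}$ is bounded in $H^1(B_1)$; extract a subsequence (relabelled $\lambda_{n_k}R_{\lambda_{n_k}}$) converging weakly in $H^1(B_1)$ to some $\overline U$, strongly in $L^2(B_1)$, and with traces converging strongly in $L^2(\partial B_1)$ by compactness of \eqref{traceoperator}. Weak closedness of the space in \eqref{dovetsaubar} gives $\overline U=0$ on $\tilde\Gamma$. Passing to the limit in \eqref{formulazdeb} with $\lambda=\lambda_{n_k}R_{\lambda_{n_k}}$ against any $\varphi\in C^\infty_c(B_1\setminus\tilde\Gamma)$, using $A(\lambda_{n_k}R_{\lambda_{n_k}}x)\to \mathrm{Id}_{N+1}$ uniformly by \eqref{stimadiA} and the fact that $(\lambda_{n_k}R_{\lambda_{n_k}})^2\int_{B_1}\tilde f(\lambda_{n_k}R_{\lambda_{n_k}}x) U^{\lambda_{n_k}R_{\lambda_{n_k}}}\varphi\,dx=O(\lambda_{n_k}^\varepsilon)$ (the rescaled version of the estimates in the proof of Lemma~\ref{dis}), one obtains $-\Delta \overline U=0$ in $B_1\setminus\tilde\Gamma$.

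\textbf{Strong convergence.} I would test the weak formulation against $U^{\lambda_{n_k}R_{\lambda_{n_k}}}$ itself, following the derivation of \eqref{utileperdopo}, to write
\begin{equation*}
\int_{B_1} A(\lambda_{n_k}R_{\lambda_{n_k}}\cdot)\nabla U^{\lambda_{n_k}R_{\lambda_{n_k}}}\cdot \nabla U^{\lambda_{n_k}R_{\lambda_{n_k}}}\,dx = \int_{\partial B_1}\bigl(A(\lambda_{n_k}R_{\lambda_{n_k}}\cdot)\nabla U^{\lambda_{n_k}R_{\lambda_{n_k}}}\cdot \nu\bigr)\, U^{\lambda_{n_k}R_{\lambda_{n_k}}}\,ds+o(1).
\end{equation*}
By Lemma~\ref{lemmaispirazione} together with strong trace convergence, the right-hand side tends to $\int_{\partial B_1}h\,\overline U\,ds$. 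A density argument in the limit equation identifies $h$ with the (distributional) normal derivative $\partial_\nu \overline U$ and, combined with Green's identity for the harmonic limit $\overline U$ vanishing on $\tilde\Gamma$, gives $\int_{\partial B_1}h\,\overline U\,ds=\int_{B_1}|\nabla \overline U|^2\,dx$. Convergence of Dirichlet energies paired with weak convergence yields strong $H^1(B_1)$ convergence of $U^{\lambda_{n_k}R_{\lambda_{n_k}}}$ to $\overline U$.

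\textbf{Homogeneity and eigenfunction.} For $r\in(0,1]$, the Almgren quotient of $\overline U$,
\begin{equation*}
\mathcal N^{\overline U}(r):=\frac{r\int_{B_r}|\nabla \overline U|^2\,dx}{\int_{\partial B_r}\overline U^2\,ds},
\end{equation*}
satisfies $\mathcal N^{\overline U}(r)=\lim_{k\to \infty}\mathcal N(\lambda_{n_k}R_{\lambda_{n_k}} r)=\ell$ for every $r\in(0,1]$, by strong $H^1$ convergence, the change of variable $y=\lambda x$ in \eqref{N}, \eqref{stimadimu}, and \eqref{ell}. Hence $\mathcal N^{\overline U}\equiv \ell$ on $(0,1]$. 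The classical Rellich--Ne\v{c}as identity for the harmonic function $\overline U$ gives $(\mathcal N^{\overline U})'\geq 0$ with equality iff $\nabla \overline U\cdot x=\ell \overline U$ pointwise; constancy therefore forces this Euler equation and thus \eqref{eomogenea}. Writing $\overline U(x)=|x|^\ell \Psi(x/|x|)$ and inserting into $-\Delta \overline U=0$ in polar coordinates yields $-\Delta_{\mathbb{S}^N}\Psi = \ell(\ell+N-1)\Psi$ in $\mathbb{S}^N\setminus\Theta$ with $\Psi=0$ on $\Theta$; the $L^2(\partial B_1)$-normalization follows by passing to the limit in \eqref{int=1} and using $\mu(\lambda_{n_k}R_{\lambda_{n_k}}\cdot)\to 1$ uniformly by \eqref{stimadimu}.

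The hard part will be the strong convergence step: identifying the weak $L^2(\partial B_1)$-limit $h$ of Lemma~\ref{lemmaispirazione} with the normal derivative of $\overline U$ and closing the corresponding Green identity for a harmonic function merely vanishing on $\tilde\Gamma$ in the trace sense requires handling carefully both the presence of the non-identity matrix $A$ (which only converges to the identity at rate $O(\lambda)$) and the reduced boundary regularity near the crack.
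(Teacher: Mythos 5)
Your proposal follows the same three-step strategy as the paper's proof: extract a weak limit $\overline U$ and show it solves $-\Delta\overline U=0$ with zero trace on $\tilde\Gamma$; upgrade to strong $H^1$-convergence via convergence of Dirichlet energies; then use constancy of the rescaled Almgren function at the value $\ell$ to force homogeneity and read off the eigenfunction. One small simplification the paper makes in the step you flag as hard: rather than identifying $h$ with a distributional normal derivative and invoking Green's identity, it passes to the limit in the boundary-value identity \eqref{dapassareallimite} to obtain $\int_{B_1}\nabla\overline U\cdot\nabla\varphi\,dx=\int_{\partial B_1}h\varphi\,ds$ for all $\varphi\in C^\infty_c(\overline{B_1}\setminus\tilde\Gamma)$, and then simply substitutes $\varphi=\overline U$ by density (legitimate since $\overline U$ vanishes on $\tilde\Gamma$), giving directly $\int_{B_1}|\nabla\overline U|^2\,dx=\int_{\partial B_1}h\overline U\,ds$; no explicit identification of $h$ is required. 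This is the same idea you propose but avoids the extra bookkeeping; everything else matches the paper's argument.
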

\begin{rem}\label{eimportante}
For future goals, we stress that from Lemma \ref{illemmaaggiunto} the number $\ell(\ell+N-1)$ turns out to be an eigenvalue of problem \eqref{problagliautovalori} and hence there exists $k_0\geq 1$ such that $\ell(\ell+N-1)=\mu_{k_0}$.
\end{rem}
\begin{proof}[proof of Lemma \ref{illemmaaggiunto}]
Let $\lambda_n\to 0^+$. Taking $R_{\lambda_n}\in [1,2]$ as in Lemma \ref{lemmarlambda}, we have that $\{U^{\lambda_n R_{\lambda_n}}\}_{n\geq 1}$ is uniformly bounded in $H^1(B_1)$  with respect to $n$, as a consequence of Lemma \ref{lemmalimitatainB1}. Therefore there exist a subsequence $\{\lambda_{n_k}\}_{k\geq 1}$ and $\overline{U}\in H^1(B_1)$ such that 
\begin{equation}\label{ukconvdeb}
U^{k}\rightharpoonup \overline{U}\ \text{in $H^1(B_1)$ as $k\to\infty$},
\end{equation}
having set $U^k:=U^{\lambda_{n_k} R_{\lambda_{n_k}}}$ for every $k\geq 1$ in order to lighten the notation. In particular, it holds that
\begin{equation}\label{illimitenonezero}
\overline{U}\not\equiv 0;
\end{equation}
indeed, combining \eqref{ukconvdeb} and the compactness of the trace operator defined in \eqref{traceoperator}, we have that
\begin{equation}\label{latraccia}
U^k\to \overline{U} \ \text{in $L^2(\partial B_1)$},
\end{equation}
which in turn, together with \eqref{stimadimu} and \eqref{int=1}, allows us to deduce that 
\begin{equation}\label{int'=1}
\int_{\partial B_1} |\overline{U}|^2\, ds=1.
\end{equation}
Now we aim at finding the boundary value problem solved by $\overline{U}$. For this, we first claim that $\overline{U}$ satisfies
\begin{equation}\label{formdebubar}
\int_{B_1} \nabla \overline{U}\cdot\nabla \varphi\, dx=0\quad \text{for every $\varphi\in C^\infty_c(B_1\setminus \tilde{\Gamma})$}. 
\end{equation}
To prove this, we fix any function $\varphi\in C^\infty_c(B_1\setminus \tilde{\Gamma})$. Since
\begin{equation}\label{inclusioneinBk}
B_1\subset B_{r_0/(\lambda_{n_k} R_{\lambda_{n_k}})} \ \text{for sufficiently large $k\geq 1$},
\end{equation}
in particular we have that $\varphi\in C^\infty_c(B_{r_0/(\lambda_{n_k} R_{\lambda_{n_k}})}\setminus \tilde{\Gamma})$ for sufficiently large $k\geq 1$.
Thus, taking in \eqref{formulazdeb} $\lambda=\lambda_{n_k}R_{\lambda_{n_k}}$ for sufficiently large $k$, we have that 
\begin{equation}\label{nonavevonumerato}
\int_{B_1} A(\lambda_{n_k}R_{\lambda_{n_k}}x) \nabla U^k(x)\cdot \nabla \varphi(x)\, dx= (\lambda_{n_k}R_{\lambda_{n_k}})^2 \int_{B_1} \tilde{f}(\lambda_{n_k}R_{\lambda_{n_k}}x)U^k(x)\varphi(x)\, dx 
\end{equation}
for sufficiently large $k$. Reasoning exactly as in the proof of Lemma \ref{lemmaispirazione} and exploiting \eqref{ukconvdeb}, we obtain that 
\begin{equation}\label{pezzosx}
\int_{B_1} A(\lambda_{n_k}R_{\lambda_{n_k}}x) \nabla U^k(x)\cdot \nabla \varphi(x)\, dx \to \int_{B_1} \nabla \overline{U}(x)\cdot\nabla \varphi(x)\, dx\quad\text{as $k\to\infty$}.
\end{equation}
Additionally we observe that under assumption \eqref{asssuftilde1}
\begin{equation*}
\begin{split}
(\lambda_{n_k}&R_{\lambda_{n_k}})^2\left|\int_{B_1} \tilde{f}(\lambda_{n_k}R_{\lambda_{n_k}}x)U^k(x)\varphi(x)\, dx\right|\\
\leq&\, (\lambda_{n_k}R_{\lambda_{n_k}})^\delta\left(\int_{B_1} \frac{|U^k(x)|^2}{|x|^2}\, dx\right)^{\frac{1}{2}} \cdot \left(\int_{B_1} \frac{|\varphi(x)|^2}{|x|^2}\, dx\right)^{\frac{1}{2}}\\
\leq &\,\frac{4(\lambda_{n_k}R_{\lambda_{n_k}})^\delta }{(N-1)^2}\left(\int_{B_1}|\nabla U^k(x)|^2\, dx+ (N-1)\int_{\partial B_1}\mu(\lambda_{n_k}R_{\lambda_{n_k}}\cdot)|U^k|^2\, ds \right)^{\frac{1}{2}}\cdot\left(\int_{B_1} |\nabla \varphi(x)|^2\, dx\right)^{\frac{1}{2}}\\
\leq &\,\frac{4(\lambda_{n_k}R_{\lambda_{n_k}})^\delta }{(N-1)^2}\left(\int_{B_1}|\nabla U^k(x)|^2\, dx+ N-1 \right)^{\frac{1}{2}}\left(\int_{B_1} |\nabla \varphi(x)|^2\, dx\right)^{\frac{1}{2}}=o(1)\  \text{as $k\to\infty$},
\end{split}
\end{equation*}
where we have applied in order the H\"{o}lder inequality, \eqref{hardy}, \eqref{mumaggiore}, \eqref{int=1} and the result in Lemma \ref{lemmalimitatainB1}.
Instead, under assumption \eqref{asssuftilde2}, we have that for some $\mathrm{const}>0$
\begin{equation*}
\begin{split}
(\lambda_{n_k}&R_{\lambda_{n_k}})^2\left|\int_{B_1} \tilde{f}(\lambda_{n_k}R_{\lambda_{n_k}}x)U^k(x)\varphi(x)\, dx\right|\\
\leq&\, \mathrm{const}\,(\lambda_{n_k}R_{\lambda_{n_k}})^{\frac{4p-N-1}{p}}\left(\int_{B_1} |\nabla U^k|^{2}\,dx + (N-1)\int_{\partial B_1}\mu(\lambda_{n_k}R_{\lambda_{n_k}}\cdot)|U^k|^2\,ds\right)^{\frac{1}{2}}\left(\int_{B_1}|\nabla \varphi|^2\,dx\right)^{\frac{1}{2}}\\
\leq&\, \mathrm{const}\,(\lambda_{n_k}R_{\lambda_{n_k}})^{\frac{4p-N-1}{p}}\left(\int_{B_1} |\nabla U^k|^{2}\,dx + N-1\right)^{\frac{1}{2}}\left(\int_{B_1}|\nabla \varphi(x)|^2\,dx\right)^{\frac{1}{2}}=o(1)\  \text{as $k\to\infty$},
\end{split}
\end{equation*}
using again the H\"{o}lder inequality, \eqref{stimadiL2star}, \eqref{mumaggiore}, \eqref{int=1} and the result in Lemma \ref{lemmalimitatainB1}.
So in both cases we have that 
\begin{equation}\label{pezzodx}
(\lambda_{n_k}R_{\lambda_{n_k}})^2 \int_{B_1} \tilde{f}(\lambda_{n_k}R_{\lambda_{n_k}}x)U^k(x)\varphi(x)\, dx\to 0 \quad \text{as $k\to\infty$}. 
\end{equation}
Thus summing \eqref{pezzosx} and \eqref{pezzodx}, passing to the limit as $k\to\infty$ in \eqref{nonavevonumerato}, we get \eqref{formdebubar}. 
Moreover we remark that, since $U^k$ belongs to the space defined in \eqref{lospaziodiulambda} for every $k$, in virtue of \eqref{inclusioneinBk}, for sufficiently large $k$ it holds that $U^k$ belongs to the space \eqref{dovetsaubar}, which is weakly closed in $H^1(B_1)$; so, thanks to \eqref{ukconvdeb}, we also have that $\overline{U}$ belongs to the above space. 
Putting together this information with \eqref{formdebubar}, we can deduce that $\overline{U}$ is a weak solution to 
\begin{equation}\label{problemarisoltodaubar}
\left\{\begin{aligned}
-\Delta
\overline{U}&=0 &&\text{in}\ B_1\setminus\tilde\Gamma, \\
\overline{U}&=0 && \text{on}\ \tilde\Gamma.
\end{aligned}\right.
\end{equation}
Our next goal is to prove that \eqref{ukconvdeb} actually holds in a strong sense, namely we want to prove that 
\begin{equation}\label{strong}
U^k\to \overline{U}\ \text{in $H^1(B_1)$ as $k\to\infty$}.
\end{equation}
To this aim, we test equation \eqref{probulambda} with any $\varphi\in C^\infty_c(\overline{B_1}\setminus \tilde \Gamma)$ (since it belongs to $C^\infty_c(\overline{B_{r_0/(\lambda_{n_k}R_{\lambda_{n_k}})}}\setminus \tilde \Gamma)$ for sufficiently large $k$), obtaining that for sufficiently large $k$
\begin{equation}\label{dapassareallimite}
\begin{split}
\int_{B_1} A(\lambda_{n_k}R_{\lambda_{n_k}}x) \nabla U^k(x)\cdot \nabla \varphi(x)\, dx=\,& (\lambda_{n_k}R_{\lambda_{n_k}})^2 \int_{B_1} \tilde{f}(\lambda_{n_k}R_{\lambda_{n_k}}x)U^k(x)\varphi(x)\, dx\\
& +  \int_{\partial B_1} (A(\lambda_{n_k}R_{\lambda_{n_k}}x) \nabla U^k(x)\cdot \nu(x))\varphi(x)\, dx.
\end{split}
\end{equation}
In light of \eqref{pezzosx}, \eqref{pezzodx} and Lemma \ref{lemmaispirazione}, up to a further subsequence still denoted with $\lambda_{n_k}$, taking the limit in \eqref{dapassareallimite} as $k\to\infty$, we get that for every $\varphi\in C^\infty_c(\overline{B_1}\setminus \tilde \Gamma)$ 
\begin{equation*}
\int_{B_1} \nabla \overline{U}\cdot \nabla \varphi\, dx= \int_{\partial B_1} h \varphi\, ds. 
\end{equation*}
The above identity still holds by density if we choose as $\varphi$ the function $\overline{U}$ itself since $\overline{U}$ belongs to the space \eqref{dovetsaubar} as observed above, and thus we can conclude that 
\begin{equation}\label{valeancora}
\int_{B_1} |\nabla \overline{U}|^2\, dx= \int_{\partial B_1} h \overline{U}\, ds.
\end{equation}
Furthermore we are allowed to compute \eqref{dapassareallimite} at $\varphi=U^k$ by a density argument since $U^k$ belongs to the space \eqref{dovetsaubar} for sufficiently large $k$, thus having that
\begin{equation}\label{larichiami}
\begin{split}
\lim_{k\to \infty} \int_{B_1} A(\lambda_{n_k}R_{\lambda_{n_k}}x)\nabla U^k(x)\cdot\nabla U^k(x)\, dx &= \lim_{k\to \infty} \int_{\partial B_1} (A(\lambda_{n_k}R_{\lambda_{n_k}}x) \nabla U^k(x)\cdot \nu(x)) U^k(x)\, ds\\
&=\int_{\partial B_1} h\overline{U}\, ds= \int_{B_1}|\nabla \overline{U}|^2\, dx, 
\end{split}
\end{equation}
if in addition we use \eqref{pezzodx} (which can proven in the same way as before even choosing $\varphi=U^k$), Lemma \ref{lemmaispirazione}, \eqref{latraccia} and ultimately \eqref{valeancora}. From this, proceeding as in the proof of Lemma \ref{lemmaispirazione}, we obtain that 
\begin{equation*}
\lim_{k\to\infty}|\nabla U^k|^2\, dx= \int_{B_1} |\nabla \overline{U}|^2\, dx,
\end{equation*}
which, combined with \eqref{ukconvdeb}, leads to have that $\nabla U^k\to \nabla \overline{U}$ in $L^2(B_1)$. This, together with \eqref{latraccia} and \eqref{hardy}, implies that $U^k\to \overline{U}$ in $L^2(B_1)$. The proof of \eqref{strong} is thereby complete.

Let us now prove that $\overline{U}$ is homogeneous of degree $\ell$. 
We then investigate the associated Almgren function, that, based on \eqref{problemarisoltodaubar}, is defined as follows (provided that the denominator is non-null):
\begin{equation}\label{Nubar}
\mathcal{N}_{\overline{U}}: t\in (0,1]\mapsto \mathcal{N} _{\overline{U}}(t):= \frac{D_{\overline{U}}(t)}{H_{\overline{U}}(t)},
\end{equation}
where 
\begin{equation*}\label{Dubar}
D_{\overline{U}}(t):= t^{1-N} \int_{B_t} |\nabla \overline{U}|^2\, dx,
\end{equation*}
and 
\begin{equation}\label{Hubar}
H_{\overline{U}}(t):= t^{-N} \int_{\partial B_t} |\overline{U}|^2\, ds.
\end{equation}
A similar discussion as that one used to prove Lemma \ref{lapositivitadiH} (taking $\tilde{f}\equiv 0$) ensures that $H_{\overline{U}}>0$,  taking advantage of \eqref{illimitenonezero}.

Let us now introduce the Almgren function associated with $U^k$ for every $k\geq 1$. For this we define, for any fixed $k\geq 1$, the following two functions 
\begin{equation*}\label{Dk}
\begin{split}
D_k :\, &t\in (0,1]\mapsto \\
&D_k(t):= t^{1-N}\left(\int_{B_t} A(\lambda_{n_k}R_{\lambda_{n_k}}\cdot)\nabla U^k\cdot\nabla U^k\, dx -(\lambda_{n_k} R_{\lambda_{n_k}})^2 \int_{B_t}\tilde{f}(\lambda_{n_k} R_{\lambda_{n_k}}\cdot)|U^k|^2\, dx \right), 
\end{split}
\end{equation*}
and \begin{equation*}\label{Hk}
H_k : t\in (0,1]\mapsto H_k(t):= t^{-N}\int_{\partial B_t} \mu (\lambda_{n_k} R_{\lambda_{n_k}}\cdot)|U^k|^2\, ds.
\end{equation*}
We notice that 
\begin{equation}\label{Hkequivalente}
H_k(t)=  \frac{H(\lambda_{n_k} R_{\lambda_{n_k}} t)}{H(\lambda_{n_k} R_{\lambda_{n_k}})},
\end{equation}
and thus $H_k>0$ by Lemma \ref{lapositivitadiH} (we stress that the fact that $\lambda_{n_k}\to 0^+$ guarantees that $\lambda_{n_k}R_{\lambda_{n_k}}\in (0,r_0]$). Hence the Almgren function given by
\begin{equation*}\label{N_k}
 \mathcal{N} _k:t\in (0,1]\mapsto \mathcal{N} _k(t):= \frac{D_k(t)}{H_k(t)}
\end{equation*}
is well-defined and satisfies 
\begin{equation}\label{unarelazioneutile}
\mathcal{N}_k(t)= \mathcal{N}(\lambda_{n_k} R_{\lambda_{n_k}} t),
\end{equation} 
thanks to \eqref{Hkequivalente} and since it also holds that 
\begin{equation*}
D_k(t)=  \frac{D(\lambda_{n_k} R_{\lambda_{n_k}} t)}{H(\lambda_{n_k} R_{\lambda_{n_k}})}. 
\end{equation*}
So from \eqref{larichiami}, \eqref{pezzodx} (which is still valid with $U^k$ in place of $\varphi$), \eqref{latraccia}, \eqref{stimadimu}, \eqref{unarelazioneutile} and \eqref{ell}, we can deduce that for every $t\in (0,1]$
\begin{equation}\label{Nubarugualeaelle}
\mathcal{N}_{\overline{U}}(t)= \lim_{k\to \infty } \mathcal{N}_k(t)= \lim_{k\to \infty } \mathcal{N}(\lambda_{n_k} R_{\lambda_{n_k}} t) = \ell. 
\end{equation}
Thus $\mathcal{N}'_{\overline{U}}(t)=0$ for a.e. $t\in (0,1)$ and consequently, considering that $\overline{U}$ is a weak solution to \eqref{problemarisoltodaubar} and arguing as to prove Lemma \ref{lemmaN'}, we have that for a.e. $t\in (0,1)$ 
\begin{equation*}
0\leq \frac{2t\left[\left(\int_{\partial B_t} \left|\frac{\partial \overline{U}}{\partial\nu}\right|^2\, ds\right)\left(\int_{\partial B_t} |\overline{U}|^2\, ds\right)- \left(\int_{\partial B_t} \overline{U}\frac{\partial \overline{U}}{\partial\nu}\, ds\right)^2\right]}{\left(\int_{\partial B_t} |\overline{U}|^2\, ds\right)^2}\leq \mathcal{N}'_{\overline{U}}(t)=0,
\end{equation*} 
by Cauchy-Schwarz's inequality. This in particular implies that $\overline{U}$ and $\partial\overline{U}/\partial\nu$ have the same direction in $L^2(\partial B_t)$ for a.e. $t\in (0,1)$ and hence, writing any $x\in \partial B_t$ as $x=t\vartheta$ with $\vartheta=\frac{x}{|x|}\in \mathbb{S}^N$, we have that 
\begin{equation}\label{daintegr3}
\frac{\partial \overline{U}}{\partial t}(t\vartheta)=\frac{\partial \overline{U}}{\partial\nu}(t\vartheta)= \eta(t) \overline{U}(t\vartheta)\quad \text{for a.e. $t\in (0,1)$ and for every $\vartheta\in \mathbb{S}^N$},
\end{equation}
for some function $\eta=\eta(t)$ defined a.e. in $(0,1)$. Multiplying the above identity by $\overline{U}(t\vartheta)$ itself and then integrating with respect to $\vartheta$ over $\mathbb{S}^N$, by \eqref{Hubar}, \cite[Lemma 4.1]{DelFel} applied to $H_{\overline{U}}(t)$, \eqref{Nubar} and at last \eqref{Nubarugualeaelle}, we can deduce that 
\begin{equation*}
\eta(t) = \frac{1}{2}\frac{H'_{\overline{U}}(t)}{H_{\overline{U}}(t)}=\frac{\mathcal{N}_{\overline{U}}(t)}{t} = \frac{\ell}{t}, 
\end{equation*}
which in turn allows us to establish that $\eta$ is summable staying far from 0. Plugging this into \eqref{daintegr3} and integrating with respect to $t$ over $(r,1)$ for any fixed $r\in (0,1)$, we obtain that 
\begin{equation}\label{explicit}
\overline{U}(r\vartheta)=r^\ell \overline{U}(\vartheta)= r^\ell \Psi(\vartheta) \quad \text{for every $r\in (0,1)$ and $\vartheta\in\mathbb{S}^N$}, 
\end{equation}
where $\Psi:= \overline{U}|_{\mathbb{S}^N}$. By \eqref{int'=1} we have that  
\begin{equation*}
\int_{\mathbb{S}^N}|\Psi|^2\, ds=1,
\end{equation*}
so that $\Psi$ is non-trivial on $\mathbb{S}^N$. Furthermore, using \eqref{explicit}, since $\overline{U}$ belongs to \eqref{dovetsaubar} then $\Psi\in \mathcal{H}_\Theta$ (defined in \eqref{Htheta}); from \eqref{problemarisoltodaubar}, we can find that $\Psi$ satisfies
\begin{equation*}
\Delta_{\mathbb{S}^N}\Psi(\vartheta) + \ell(N+\ell-1)\Psi(\vartheta)=0
\end{equation*} 
in a weak sense, that is \eqref{fordebpsi} holds. So by definition $\Psi$ turns out to be an eigenfunction of problem \eqref{problagliautovalori} associated with $\ell(N+\ell-1)$. 
\end{proof}
\begin{lem}\label{ellelegataautovalore}
Let $\ell$ be as in \eqref{ell} and $k_0\geq 1$ be as in Remark \ref{eimportante}. Then
\begin{equation}\label{ellekomezzi}
\ell=\frac{k_0}{2}. 
\end{equation}
\end{lem}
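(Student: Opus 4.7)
The plan is a short algebraic manipulation that combines two previously established facts. First, by Remark \ref{eimportante}, the limit $\ell$, which is nonnegative by Lemma \ref{lemmaesistenza}, satisfies
$$\ell(\ell+N-1) = \mu_{k_0}$$
for some $k_0 \geq 1$. Second, the explicit description of the eigenvalues in \eqref{autovespliciti} pins down $\mu_{k_0}$ in closed form as $\mu_{k_0}=k_0(k_0+2(N-1))/4$. Substituting, $\ell$ must solve the quadratic
$$\ell^{2}+(N-1)\ell-\frac{k_0(k_0+2(N-1))}{4}=0.$$

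Next I would simply compute the discriminant of this quadratic and observe that it collapses to the perfect square $(k_0+N-1)^{2}$, so that the two real roots are
$$\ell=\frac{k_0}{2}\qquad\text{and}\qquad \ell=-\frac{k_0+2(N-1)}{2}.$$
Since $k_0\geq 1$ and $N\geq 2$, the second root is strictly negative; combined with the nonnegativity of $\ell$ provided by Lemma \ref{lemmaesistenza}, this forces $\ell=k_0/2$, which is precisely \eqref{ellekomezzi}.

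The only non-algebraic ingredient is the sign information $\ell\geq 0$, which is already encoded in Lemma \ref{lemmaesistenza} (and more concretely in the lower bound \eqref{perdirecheillimiteepositivo}). All remaining steps are routine manipulations, so I do not anticipate any real obstacle here; the lemma is essentially a bookkeeping consequence of the monotonicity machinery developed in Section \ref{section3} together with the blow-up classification of Lemma \ref{illemmaaggiunto}.
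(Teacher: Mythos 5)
Your proof is correct, and the overall skeleton---use Remark~\ref{eimportante} to write $\ell(\ell+N-1)=\mu_{k_0}$, solve the resulting quadratic, and discard the unwanted root---is the same as the paper's. The genuine difference is in how the unwanted root is excluded. The paper does not invoke the nonnegativity of $\ell$ at all: it observes that if $\ell$ equalled the negative root $y_-=-\frac{N-1}{2}-\sqrt{\left(\frac{N-1}{2}\right)^2+\mu_{k_0}}$, then the homogeneous blow-up limit $\overline{U}(x)=|x|^{y_-}\Psi(x/|x|)$ produced by Lemma~\ref{illemmaaggiunto} would fail to lie in $L^{2^*}(B_1)$, contradicting the fact that $\overline{U}\in H^1(B_1)$ together with the Sobolev inequality \eqref{stimadiL2star}. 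Your route is more economical: since Lemma~\ref{lemmaesistenza} already records that $\ell\geq 0$ (which is exactly what \eqref{perdirecheillimiteepositivo} gives in the limit $r\to 0^+$), the strictly negative root $-\tfrac{k_0+2(N-1)}{2}$ is ruled out for free, without returning to the blow-up profile or to Sobolev embedding. Both arguments are sound; yours uses strictly less machinery and is the more natural one given what Section~\ref{section3} has already established.

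One side remark worth flagging: the closed form $\mu_{k_0}=k_0(k_0+2(N-1))/4$ that you substitute is indeed what makes the discriminant collapse to the perfect square $(k_0+N-1)^2$, and it is the correct explicit expression for the eigenvalues of problem \eqref{problagliautovalori} (check the model case $N=2$, $k=1$: the crack-adapted harmonic $\sqrt{\rho}\sin(\phi/2)$ gives $\ell=1/2$ and $\mu_1=3/4$). The formula as displayed in \eqref{autovespliciti}, which reads $k(k+2N+2)/4$, contains a sign typo (the $+2$ should be $-2$); your substitution silently uses the correct version, which is why the algebra works.
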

\begin{proof}
Let $k_0\geq 1$ be as in Remark \ref{eimportante}.
In order to prove \eqref{ellekomezzi}, we claim that  
\begin{equation*}
\ell = -\frac{N-1}{2}+\sqrt{\left(\frac{N-1}{2}\right)^2+\mu_{k_0}}.
\end{equation*}
Once this is shown, \eqref{ellekomezzi} follows from the explicit formula \eqref{autovespliciti} for the eigenvalues of problem \eqref{problagliautovalori}. In order to prove the claim, we take into account Remark \ref{eimportante} to conclude that $\ell$ solves  
\begin{equation*}
y^2 +y(N-1)- \mu_{k_0}=0,
\end{equation*}
which in general admits two solutions $y_{\pm}$ given by 
\begin{equation*}\label{sol1}
y_{\pm}= -\frac{N-1}{2} \pm \sqrt{\left(\frac{N-1}{2}\right)^2+\mu_{k_0}}. 
\end{equation*}
We are allowed to exclude that $\ell=y_{-}$ since in such a case we would have that
\begin{equation*}
\overline{U}(x)= |x|^{y_-}\Psi\left(\frac{x}{|x|}\right)\notin L^{2^\ast}(B_1),
\end{equation*}
which contradicts \eqref{dovetsaubar} due to the validity of \eqref{stimadiL2star}.  
The claim is thereby proved and hence the proof of lemma is complete.
\end{proof}
\begin{prop}\label{propconvergenza}
Let $k_0\geq 1$ be as in Lemma \ref{ellelegataautovalore}. 
For any sequence $\lambda_n\to 0^+$ there exist a subsequence $\{\lambda_{n_k}\}_{k\geq 1}$ and an $L^2$-normalized eigenfunction $\Psi$ of problem \eqref{problagliautovalori} associated with the eigenvalue $\mu_{k_0}$ such that 
\begin{equation*}
U^{\lambda_{n_k}}(x)\to |x|^{\frac{k_0}{2}}\Psi\left(\frac{x}{|x|}\right)\quad \text{in $H^1(B_1)$ as $k\to\infty$}. 
\end{equation*} 
\end{prop}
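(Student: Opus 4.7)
The plan is to derive the claim from Lemma~\ref{illemmaaggiunto} by bridging the gap between the rescaled sequence $\{\lambda_{n_k}R_{\lambda_{n_k}}\}$ and the original $\{\lambda_{n_k}\}$. First, given $\lambda_n\to 0^+$, I apply Lemma~\ref{illemmaaggiunto} together with Lemma~\ref{ellelegataautovalore} to extract a subsequence, still denoted $\{\lambda_{n_k}\}$, with radii $R_k:=R_{\lambda_{n_k}}\in[1,2]$, such that $U^{\lambda_{n_k}R_k}\to\overline{U}$ in $H^1(B_1)$, where $\overline{U}(x)=|x|^{k_0/2}\Psi(x/|x|)$ and $\Psi$ is an $L^2$-normalized eigenfunction of \eqref{problagliautovalori} associated with $\mu_{k_0}$. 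Passing to further subsequences, I may also assume $R_k\to R_0\in[1,2]$ and, by Lemma~\ref{lemdoublingH}, $c_k:=\sqrt{H(\lambda_{n_k}R_k)/H(\lambda_{n_k})}\to c_\infty$ for some $c_\infty\in[M_1^{-1/2},M_1^{1/2}]$.

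The bridging identity follows at once from the definition \eqref{Ulambda}:
\begin{equation*}
U^{\lambda_{n_k}}(y)=c_k\,U^{\lambda_{n_k}R_k}(y/R_k)\qquad\text{for every }y\in B_{R_k}\supset B_1.
\end{equation*}
Next, I show that $U^{\lambda_{n_k}R_k}(\cdot/R_k)\to\overline{U}(\cdot/R_0)$ in $H^1(B_1)$. Splitting
\begin{equation*}
U^{\lambda_{n_k}R_k}(y/R_k)-\overline{U}(y/R_0)=\bigl[U^{\lambda_{n_k}R_k}(y/R_k)-\overline{U}(y/R_k)\bigr]+\bigl[\overline{U}(y/R_k)-\overline{U}(y/R_0)\bigr],
\end{equation*}
the first bracket tends to zero in $H^1(B_1)$ by a change of variables $z=y/R_k$ (using $R_k\in[1,2]$ so $B_{1/R_k}\subset B_1$, together with $U^{\lambda_{n_k}R_k}\to\overline{U}$ in $H^1(B_1)$), while the second bracket, by the homogeneity \eqref{eomogenea} of $\overline{U}$, equals $(R_k^{-k_0/2}-R_0^{-k_0/2})\overline{U}(y)$ and hence vanishes in $H^1(B_1)$ as $R_k\to R_0$. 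Combined with $c_k\to c_\infty$, this yields $U^{\lambda_{n_k}}\to c_\infty R_0^{-k_0/2}\,\overline{U}$ in $H^1(B_1)$.

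Finally, I identify the multiplicative constant by passing to the limit in the normalization \eqref{int=1}. The compactness of the trace operator $H^1(B_1)\hookrightarrow L^2(\partial B_1)$, together with the uniform convergence $\mu(\lambda_{n_k}\cdot)\to 1$ on $\partial B_1$ guaranteed by \eqref{stimadimu}, gives
\begin{equation*}
\int_{\partial B_1}\bigl|c_\infty R_0^{-k_0/2}\,\overline{U}\bigr|^2\,ds=1;
\end{equation*}
since $\int_{\partial B_1}|\overline{U}|^2\,ds=\int_{\mathbb{S}^N}|\Psi|^2\,ds=1$ from the $L^2$-normalization of $\Psi$ established in Lemma~\ref{illemmaaggiunto}, and $c_\infty, R_0>0$, this forces $c_\infty R_0^{-k_0/2}=1$, so that $U^{\lambda_{n_k}}\to\overline{U}$ in $H^1(B_1)$. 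I expect the $H^1$-continuity of the dilation step to be the main technical point, but it reduces to a routine change of variables because the dilation factors $R_k$ are uniformly bounded above and away from zero, and the homogeneity of $\overline{U}$ handles the limit function explicitly.
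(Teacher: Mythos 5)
Your proof is correct and follows the same overall strategy as the paper: bridge $U^{\lambda_{n_k}}$ to $U^{\lambda_{n_k}R_{\lambda_{n_k}}}$ via the rescaling relation, invoke Lemma~\ref{illemmaaggiunto}, the doubling estimate of Lemma~\ref{lemdoublingH}, the homogeneity~\eqref{eomogenea}, and the normalization~\eqref{int=1} to pin down the multiplicative constant. The main difference is one of organization rather than substance: the paper first establishes weak convergence of $U^{\lambda_{n_k}}$ via uniform boundedness, identifies the weak limit by testing against $C^\infty(\overline{B_1})$ functions through a change of variables, and then upgrades to strong convergence by separately computing the limit of the Dirichlet energy; you instead write out the exact algebraic identity $U^{\lambda_{n_k}}(y)=c_k\,U^{\lambda_{n_k}R_k}(y/R_k)$ up front and split into a ``converging under a perturbed dilation'' piece and a ``homogeneity correction'' piece, which yields strong $H^1$ convergence directly. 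This is a leaner route to the same conclusion. One tiny remark: once you have strong $H^1(B_1)$ convergence of $U^{\lambda_{n_k}}$, passing to the limit in~\eqref{int=1} only needs the \emph{continuity} of the trace map~\eqref{traceoperator}, not its compactness; compactness is what the paper uses because it is working from weak convergence at that stage.
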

\begin{proof}
Let $\lambda_n\to 0^+$. By Lemma \ref{lemmalimitatainB1} we have that $\{U^{\lambda_n}\}_{n\geq 1}$ is uniformly bounded in  $H^1(B_1)$ with respect to $n$ and so there exist a subsequence $\{\lambda_{n_k}\}_{k\geq 1}$ and an $H^1(B_1)$-function $\underline{U}$ such that
\begin{equation}\label{underlineu}
U^{\lambda_{n_k}}\rightharpoonup \underline{U} \quad \text{in $H^1(B_1)$ as $k\to \infty$}.
\end{equation}
Now we observe that, if we take $R_{\lambda_{n_k}}\in [1,2]$ as in Lemma \ref{lemmarlambda}, there exist a subsequence (still denoted with $\lambda_{n_k}$) and $\overline{R}\in [1,2]$ such that $R_{\lambda_{n_k}}\to \overline{R}$.
Moreover we notice that, in virtue of \eqref{perintero}, up to a further subsequence (still denoted with $\lambda_{n_k}$) the sequences $U^{\lambda_{n_k}R_{\lambda_{n_k}}}$ and $\nabla U^{\lambda_{n_k}R_{\lambda_{n_k}}}$ are uniformly dominated by two functions in $L^2(B_1)$ and it also holds that $U^{\lambda_{n_k}R_{\lambda_{n_k}}}\to \overline{U}$ and $|\nabla U^{\lambda_{n_k}R_{\lambda_{n_k}}}|^2\to |\nabla \overline{U}|^2$ a.e. in $B_1$, being $\overline{U}$ the limit profile in Lemma \ref{illemmaaggiunto}. In addition, in light of Lemma \ref{lemdoublingH}, we have that up to a further subsequence (still denoted with $\lambda_{n_k}$)
$$\left\{\frac{H(\lambda_{n_k}R_{\lambda_{n_k}})}{H(\lambda_{n_k})}\right\}_{k\geq 1}$$  admits finite limit as $k\to \infty$ that will be denoted by $\eta$.
Now we consider any $v\in C^\infty(\overline{B_1})$. All the previous considerations and the Lebesgue dominated convergence theorem imply that 
\begin{equation*}
\begin{split}
\lim_{k\to \infty} \int_{B_1} U^{\lambda_{n_k}}(x) v(x)\, dx&= \lim_{k\to \infty} R_{\lambda_{n_k}}^{N+1}\sqrt{\frac{H(\lambda_{n_k}R_{\lambda_{n_k}})}{H(\lambda_{n_k})}} \int_{B_{1/R_{\lambda_{n_k}}}}  U^{\lambda_{n_k}R_{\lambda_{n_k}}} (x)v(R_{\lambda_{n_k}}x)\, dx\\
&=\overline{R}^{N+1}\sqrt{\eta} \int_{B_{1/\overline{R}}}\overline{U}(x)v(\overline{R}x)\, dx \\
&= \sqrt{\eta} \int_{B_1} \overline{U}_{\overline{R}}(x)v(x)\, dx, 
\end{split}
\end{equation*}
where $\overline{U}_{\overline{R}}(x):=\overline{U}(x/\overline{R})$ for every $x\in B_1$. By a density argument we thus have that $U^{\lambda_{n_k}} \rightharpoonup\sqrt{\eta}\,\overline{U}_{\overline{R}}$ in $L^2(B_1)$. Combining this with \eqref{underlineu}, we obtain that $\underline{U}=\sqrt{\eta}\,\overline{U}_{\overline{R}}$ and therefore $U^{\lambda_{n_k}} \rightharpoonup\sqrt{\eta}\,\overline{U}_{\overline{R}}$ in $H^1(B_1)$. Actually this last convergence holds in a strong sense: indeed, we have that
\begin{equation*}
\begin{split}
\lim_{k\to\infty}\int _{B_1} |\nabla U^{\lambda_{n_k}}(x)|^2\, dx &= \lim_{k\to\infty} R_{\lambda_{n_k}}^{N-1} \frac{H(\lambda_{n_k}R_{\lambda_{n_k}})}{H(\lambda_{n_k})} \int_{B_{1/R_{\lambda_{n_k}}}}|\nabla U^{\lambda_{n_k}R_{\lambda_{n_k}}}(x)|^2\, dx\\
& = \overline{R}^{N-1} \eta\int_{B_{1/\overline{R}}} |\nabla \overline{U}(x)|^2\, dx = \eta\int_{B_1}|\nabla \overline{U}_{\overline{R}}(x)|^2\, dx,
\end{split}
\end{equation*}
which in turn implies that $\nabla U^{\lambda_{n_k}}\to \sqrt{\eta}\nabla (\overline{U}_{\overline{R}})$ in $L^2(B_1)$; now summing this with the compactness of the trace operator \eqref{traceoperator}, from \eqref{hardy} we can infer that
\begin{equation}\label{convforteaeta}
U^{\lambda_{n_k}} \to\sqrt{\eta}\,\overline{U}_{\overline{R}} \quad \text{in $H^1(B_1)$ as $k\to\infty$}.
\end{equation}
In order to prove that $\sqrt{\eta}\,\overline{U}_{\overline{R}} = \overline{U}$, we exploit \eqref{eomogenea} to deduce that
\begin{equation}\label{usegnatoR}
\overline{U}_{\overline{R}}= \frac{1}{\overline{R}^\ell}\overline{U}. 
\end{equation}
Hence we finish if we prove that $\frac{\sqrt{\eta}}{\overline{R}^\ell}=1$. 
To this purpose, we observe that 
\begin{equation}\label{e1}
\begin{split}
\int_{\partial B_1} \mu(\lambda_{n_k})|U^{\lambda_{n_k}}|^2\, ds&= \int_{\partial B_1} |U^{\lambda_{n_k}}|^2\, ds + O(\lambda_{n_k}) \int_{\partial B_1} |U^{\lambda_{n_k}}|^2\, ds\to  \int_{\partial B_1} \eta|\overline{U}_{\overline{R}}|^2\, ds
\end{split}
\end{equation}
as $k\to\infty$, thanks to \eqref{stimadimu}, \eqref{convforteaeta} combined with the compactness of the trace operator \eqref{traceoperator}, which in turn together with  Lemma \ref{lemmalimitatainB1} leads to the uniform boundedness of $\{U^{\lambda_{n_k}}\}_{k\geq 1}$ in $L^2(\partial B_1)$. Summing \eqref{int=1}, \eqref{e1}, \eqref{usegnatoR} and \eqref{int'=1}, we get
\begin{equation*}
1=\int_{\partial B_1}\eta|\overline{U}_{\overline{R}}|^2\, ds = \eta\int_{\partial B_1} \frac{1}{\overline{R}^\ell}|\overline{U}|^2\, ds = \frac{\eta}{\overline{R}^\ell},
\end{equation*}
as desired. So, pushing this into \eqref{convforteaeta}, taking into account \eqref{usegnatoR}, we obtain that $U^{\lambda_{n_k}}\to\overline{U}$ in $H^1(B_1)$ as $k\to\infty$. The proof is thereby complete if we put together this with Lemma \ref{illemmaaggiunto}, Remark \ref{eimportante} and at last Lemma \ref{ellelegataautovalore}.

\end{proof}
Our next aim is to prove that the limit of $H(r)/r^{k_0}$ as $r\to 0^+$, which we already know to be finite by Lemmas \ref{lemmadellimitecheesiste} and \ref{ellelegataautovalore}, is strictly positive. For this (as made in \cite{DelFel}) an asymptotic expansion of the Fourier coefficients associated with $U$ is needed: in order to fully state the result, for every $k\geq 1$ let $m_k\geq 1$ be the multiplicity of the eigenvalue $\mu_{k}$ and let $\{Y_{k,m}\}_{m\in \{1,2,\dots,m_k\}}$ be a $L^2(\mathbb{S}^N)$-orthonormal basis of the eigenspace associated with $\mu_{k}$; then we define for every $k\geq 1$, $m\in \{1,2,\dots,m_k\}$ and $\lambda\in (0,r_0)$ 
\begin{equation}\label{coeffdiF}
\varphi_{k,m}(\lambda)= \int_{\mathbb{S}^N} U(\lambda\vartheta)Y_{k,m}(\vartheta)\, ds
\end{equation} 
and 
\begin{equation*}\label{iupsilon}
\begin{split}
\Upsilon_{k,m}(\lambda)= &-\int_{B_\lambda}(A-\mathrm{Id}_{N+1})\nabla U(x)\cdot\frac{1}{|x|}\nabla_{\mathbb{S}^N} Y_{k,m}\left(\frac{x}{|x|}\right)\,dx + \int_{B_\lambda} \tilde{f}(x)U(x)Y_{k,m}\left(\frac{x}{|x|}\right)\,dx\\
&+ \int_{\partial B_\lambda}(A-\mathrm{Id}_{N+1})\nabla U(x)\cdot\frac{x}{|x|}Y_{k,m}\left(\frac{x}{|x|}\right)\,ds.
\end{split}
\end{equation*}
We simply rewrite the expansion in \cite[Lemma 6.5]{DelFel} adapted to our setting, namely with $\tilde{f}$ satisfying either \eqref{asssuftilde1} or \eqref{asssuftilde2}, without providing the proof since it is precisely the same (indeed in \cite{DelFel} at this point of dissertation the authors apply a diffeomorphism that gives rise to a new formulation of their problem with the same features of our problem \eqref{problU}). 
\begin{lem}
Let $k_0\geq 1$ be as in Lemma \ref{ellelegataautovalore} and let $m_{k_0}$ the multiplicity of the eigenvalue $\mu_{k_0}$. For every $m\in \{1,2,\dots,m_{k_0}\}$ and $R\in (0,r_0]$
\begin{equation}\label{espansione}
\begin{split}
\varphi_{k_0,m}(\lambda)=\lambda^{\frac{k_0}{2}}\biggl(&\frac{\varphi_{k_0,m}(R)}{R^{\frac{k_0}{2}}}+\frac{2N+k_0-2}{2(N+k_0-1)}\int_\lambda^R t^{-N-\frac{k_0}{2}}\Upsilon_{k_0,m}(t)\,dt\\
&+\frac{k_0 R^{-N+1-k_0}}{2(N+k_0-1)}\int_0^R t^{\frac{k_0}{2}-1}\Upsilon_{k_0,m}(t)\,dt\biggr)+O(\lambda^{\frac{k_0}{2}+\bar{\varepsilon}+1})\quad \text{as $\lambda\to 0^+$},
\end{split}
\end{equation}
being $\bar{\varepsilon}\in (-1,0)$ defined in \eqref{eps}. 
\end{lem}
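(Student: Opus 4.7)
The plan is to follow the classical Frobenius-type argument carried out in \cite[Lemma 6.5]{DelFel}, adapted to the presence of the matrix $A$. The proof proceeds in three stages: first, derive an Euler-type second order ODE satisfied by $\varphi_{k_0,m}$; second, solve this ODE by variation of parameters using the explicit homogeneous solutions; third, estimate the remainder by exploiting the estimates on $A-\mathrm{Id}_{N+1}$ and on the height function $H$ established earlier in the paper.

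For the first stage, I would rewrite the equation solved by $U$ in the form
\begin{equation*}
-\Delta U = \tilde{f} U + \mathrm{div}\bigl((A-\mathrm{Id}_{N+1})\nabla U\bigr) \quad \text{in } B_{\tilde{r}}\setminus\tilde{\Gamma},
\end{equation*}
integrate against $Y_{k_0,m}(x/|x|)$ over $B_\lambda \setminus B_\varepsilon$, perform integration by parts (using that $\nabla Y_{k_0,m}(x/|x|) = |x|^{-1}\nabla_{\mathbb{S}^N}Y_{k_0,m}(x/|x|)$ has no radial component), and let $\varepsilon \to 0^+$. Using the spherical decomposition of the Laplacian together with $-\Delta_{\mathbb{S}^N} Y_{k_0,m} = \mu_{k_0} Y_{k_0,m}$, this yields the integral identity
\begin{equation*}
\Upsilon_{k_0,m}(\lambda) = -\lambda^N \varphi_{k_0,m}'(\lambda) + \mu_{k_0}\int_0^\lambda r^{N-2}\varphi_{k_0,m}(r)\, dr,
\end{equation*}
which, after differentiation, gives the ODE
\begin{equation*}
-\varphi_{k_0,m}''(\lambda) - \frac{N}{\lambda}\varphi_{k_0,m}'(\lambda) + \frac{\mu_{k_0}}{\lambda^2}\varphi_{k_0,m}(\lambda) = \lambda^{-N}\Upsilon_{k_0,m}'(\lambda).
\end{equation*}

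For the second stage, the characteristic equation $\sigma^2 + (N-1)\sigma - \mu_{k_0} = 0$ has roots $\sigma_+ = k_0/2$ and $\sigma_- = 1-N-k_0/2$, by Lemma \ref{ellelegataautovalore}. The corresponding homogeneous solutions $\lambda^{\sigma_{\pm}}$ have Wronskian $W(\lambda) = -(N+k_0-1)\lambda^{-N}$. Variation of parameters produces a particular solution as an indefinite integral involving $\Upsilon_{k_0,m}'$; integrating by parts in the integrand (choosing the endpoints $R$ for the $\sigma_+$ branch and $0$ for the $\sigma_-$ branch) removes the derivative on $\Upsilon_{k_0,m}$ and yields the two integrals appearing in \eqref{espansione}. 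The explicit factors $(2N+k_0-2)/(2(N+k_0-1))$ and $k_0/(2(N+k_0-1))$ come from grouping $1/W$ with the integration-by-parts boundary terms. Fixing the constant multipliers of $\lambda^{\sigma_+}$ and $\lambda^{\sigma_-}$ is then done by (i) matching at $\lambda = R$ the value $\varphi_{k_0,m}(R)$, and (ii) imposing that the coefficient of the singular homogeneous mode $\lambda^{1-N-k_0/2}$ vanishes, which is forced by the regularity $U \in H^1(B_{\tilde r})$. After cancellation of the boundary terms at $R$ against each other, one arrives at the stated representation plus a single residual term
\begin{equation*}
E(\lambda) = -\frac{k_0}{2(N+k_0-1)}\,\lambda^{1-N-k_0/2}\int_0^\lambda t^{k_0/2-1}\Upsilon_{k_0,m}(t)\,dt.
\end{equation*}

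The main obstacle, and the content of the third stage, is to show $E(\lambda) = O(\lambda^{k_0/2+\bar\varepsilon+1})$; equivalently, that $\int_0^\lambda t^{k_0/2-1}|\Upsilon_{k_0,m}(t)|\,dt = O(\lambda^{N+k_0+\bar\varepsilon})$. To this end I would estimate each of the three pieces of $\Upsilon_{k_0,m}(t)$ separately: for the volume term involving $A - \mathrm{Id}_{N+1}$, I would use the bound $|A - \mathrm{Id}_{N+1}| = O(|x|)$ from \eqref{stimadiA} together with the Cauchy--Schwarz inequality, the energy estimate \eqref{utile1}, and $D(t) + H(t) = O(t^{k_0})$ (from Lemma \ref{lemmadellimitecheesiste} and the boundedness of $\mathcal{N}$); for the $\tilde f U$ term, I would distinguish the two cases \eqref{asssuftilde1} and \eqref{asssuftilde2}, using Hardy's inequality \eqref{hardy} in the first case and H\"older combined with \eqref{stimadiL2star} in the second, the exponent $\bar\varepsilon$ of \eqref{eps} coming exactly from the decay of $\tilde f$ in each alternative; and for the boundary term $\int_{\partial B_\lambda}(A-\mathrm{Id}_{N+1})\nabla U\cdot\nu\,Y_{k_0,m}\,ds$, which is only controlled in an averaged sense, I would apply the trace theory together with the doubling-type estimate Lemma \ref{lemdoublingH} to make sense of the integral against $t^{k_0/2-1}$. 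Since all the necessary ingredients are available in our setting and the algebraic structure of the remainder is identical to that of \cite[Lemma 6.5]{DelFel}, the remaining computation is the same as there and produces the claimed order.
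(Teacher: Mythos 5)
Your proposal is a faithful reconstruction of the Frobenius-type ODE argument from \cite[Lemma 6.5]{DelFel}, which is exactly what the paper invokes: the paper explicitly declines to give a proof, stating that the expansion is ``precisely the same'' as in that reference once the diffeomorphism is applied. Your three-stage scheme (spherical projection yielding the Euler ODE, variation of parameters using $\sigma_+ = k_0/2$ and $\sigma_- = 1-N-k_0/2$ with Wronskian $-(N+k_0-1)\lambda^{-N}$, then estimation of $\Upsilon_{k_0,m}$ through \eqref{stimadiA}, \eqref{utile1}, \eqref{Hquasiralla2elle} and the Hardy/Sobolev bounds) is the intended argument, so the approaches coincide.
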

\begin{lem}\label{lemmalimpositivo}
Let $k_0\geq 1$ be as in Lemma \ref{ellelegataautovalore}. Then 
\begin{equation}\label{limitemagdizero}
\lim_{r\to 0^+}\frac{H(r)}{r^{k_0}}>0.
\end{equation}
\end{lem}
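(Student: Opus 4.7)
The plan is to argue by contradiction: suppose $\lim_{r\to 0^+} H(r)/r^{k_0}=0$, a value which exists and is nonnegative by Lemmas \ref{lemmadellimitecheesiste} and \ref{ellelegataautovalore}. By Proposition \ref{propconvergenza}, for any sequence $\lambda_n\to 0^+$ there is a subsequence $\{\lambda_{n_k}\}$ along which $U^{\lambda_{n_k}}\to |x|^{k_0/2}\Psi(x/|x|)$ in $H^1(B_1)$, with $\Psi=\sum_{m=1}^{m_{k_0}}\beta_m Y_{k_0,m}$ an $L^2(\mathbb{S}^N)$-normalized eigenfunction, so at least one coefficient $\beta_{m_0}$ is nonzero. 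Passing to traces via the compactness of \eqref{traceoperator} and exploiting \eqref{stimadimu} together with the definition \eqref{coeffdiF}, one obtains $\varphi_{k_0,m}(\lambda_{n_k})/\sqrt{H(\lambda_{n_k})}\to\beta_m$ for every $m$. Under the contradictory assumption this forces $\varphi_{k_0,m}(\lambda_{n_k})/\lambda_{n_k}^{k_0/2}\to 0$; since expansion \eqref{espansione} already ensures the existence of $\lim_{\lambda\to 0^+}\varphi_{k_0,m}(\lambda)/\lambda^{k_0/2}$, that limit is in fact zero for every $m$, independently of the subsequence.

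Next I would exploit that the $\lambda$-independent bracket in \eqref{espansione} coincides with this limit and therefore vanishes for every $m\in\{1,\dots,m_{k_0}\}$ and every $R\in(0,r_0]$. Solving that identity for $\varphi_{k_0,m}(R)/R^{k_0/2}$ and substituting the resulting expression back into \eqref{espansione} (with $R$ free and $\lambda$ the running variable) collapses the integral $\int_\lambda^R$ to $\int_0^\lambda$ with a sign change, yielding
\[
\varphi_{k_0,m}(\lambda)=-\lambda^{k_0/2}\,\frac{2N+k_0-2}{2(N+k_0-1)}\int_0^\lambda t^{-N-k_0/2}\Upsilon_{k_0,m}(t)\,dt+O(\lambda^{k_0/2+\bar\varepsilon+1}).
\]
The three summands defining $\Upsilon_{k_0,m}$ can be controlled by combining $|A(x)-\mathrm{Id}_{N+1}|=O(|x|)$ from \eqref{stimadiA}, the assumptions \eqref{asssuftilde1}--\eqref{asssuftilde2} on $\tilde f$, together with the energy inequality \eqref{utile1}, the bound \eqref{Hquasiralla2elle} on $H$ and the upper bound $\mathcal{N}\le C_4$ from \eqref{Nlimitatadasopra}. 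This should yield $\int_0^\lambda t^{-N-k_0/2}\Upsilon_{k_0,m}(t)\,dt=O(\lambda^{\bar\varepsilon+1})$, and consequently $\varphi_{k_0,m}(\lambda)=O(\lambda^{k_0/2+\bar\varepsilon+1})$.

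The contradiction is then obtained by comparing two bounds on $H$ along the blow-up subsequence. Since $\beta_{m_0}\ne 0$, one has $\varphi_{k_0,m_0}(\lambda_{n_k})\ge\frac{|\beta_{m_0}|}{2}\sqrt{H(\lambda_{n_k})}$ for $k$ large, whence the decay just obtained gives $H(\lambda_{n_k})\le\mathrm{const}\cdot\lambda_{n_k}^{k_0+2(\bar\varepsilon+1)}$. On the other hand, \eqref{Hmaggioredisigma} delivers $H(\lambda_{n_k})\ge C_6\lambda_{n_k}^{k_0+\sigma}$ for every $\sigma>0$. Choosing any $\sigma\in(0,2(\bar\varepsilon+1))$, admissible because $\bar\varepsilon>-1$ by \eqref{eps}, one arrives at $C_6\le\mathrm{const}\cdot\lambda_{n_k}^{2(\bar\varepsilon+1)-\sigma}\to 0$, which is absurd. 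The main obstacle I anticipate is obtaining the sharp decay of $\Upsilon_{k_0,m}(t)$ as $t\to 0^+$ required to secure the estimate $\int_0^\lambda t^{-N-k_0/2}\Upsilon_{k_0,m}(t)\,dt=O(\lambda^{\bar\varepsilon+1})$: this needs the Cauchy--Schwarz inequality coupled with the energy bounds from Section \ref{section3} and a careful treatment of the boundary term on $\partial B_\lambda$, paralleling the analogous step in \cite{DelFel}.
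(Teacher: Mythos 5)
Your argument is correct and follows essentially the same route as the paper: contradiction hypothesis forces $\varphi_{k_0,m}(\lambda)/\lambda^{k_0/2}\to 0$, the expansion \eqref{espansione} then bootstraps to $\varphi_{k_0,m}(\lambda)=O(\lambda^{k_0/2+\bar\varepsilon+1})$, and this clashes with the lower bound \eqref{Hmaggioredisigma} and the $L^2(\mathbb{S}^N)$-normalization of the blow-up limit from Proposition \ref{propconvergenza}. Two small organizational deviations from the paper are immaterial: you obtain $\varphi_{k_0,m}(\lambda)/\lambda^{k_0/2}\to 0$ via the blow-up convergence plus the existence of the full limit from \eqref{espansione}, whereas the paper gets it directly from Parseval's identity; and you compare an upper and a lower bound on $H$ itself, whereas the paper divides by $\sqrt{H(\lambda)}$ and contradicts $(U^{\lambda_n},\Psi)_{L^2(\mathbb{S}^N)}\to 1$. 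The technical estimate $\int_0^\lambda t^{-N-k_0/2}\Upsilon_{k_0,m}(t)\,dt=O(\lambda^{\bar\varepsilon+1})$ that you flag as the main obstacle is also not re-derived in the paper, which simply refers to the analogous computation in \cite{DelFel}.
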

\begin{proof}
Expanding the function $\vartheta\mapsto U(\lambda\vartheta)\in L^2(\mathbb{S}^N)$ for every fixed $\lambda\in (0,r_0)$ in Fourier series with respect to the $L^2(\mathbb{S}^N)$-orthonormal basis $\{Y_{k,m}: k\geq 1, m\in \{1,2,\dots,m_k\}\}$, we have that 
\begin{equation*}
U(\lambda\vartheta)= \sum_{k\geq 1} \sum_{m=1}^{m_k} \varphi_{k,m}(\lambda)Y_{k,m}(\vartheta).
\end{equation*}
From this and \eqref{H}, applying a suitable change of variable, using \eqref{stimadimu} and the Parseval identity, we get 
\begin{equation}\label{Hparseval}
H(\lambda)=(1+O(\lambda))\sum_{k\geq 1}\sum_{m=1}^{m_k} |\varphi_{k,m}(\lambda)|^2\quad \text{for every fixed $\lambda\in (0,r_0)$}. 
\end{equation}
Now we assume by contradiction that the limit in \eqref{limitemagdizero} is zero; so from \eqref{Hparseval} it follows that 
\begin{equation*}
\lim_{\lambda \to 0^+}\frac{\varphi_{k_0,m}(\lambda)}{\lambda^{\frac{k_0}{2}}}=0\quad \text{for every $m\in \{1,2,\dots,m_{k_0}\}$}.
\end{equation*}
This, together with \eqref{espansione}, implies that for every $m\in \{1,2,\dots,m_{k_0}\}$ and $R\in (0,r_0]$
\begin{equation*}
\frac{\varphi_{k_0,m}(R)}{R^{\frac{k_0}{2}}}+\frac{2N+k_0-2}{2(N+k_0-1)}\int_0^R t^{-N-\frac{k_0}{2}}\Upsilon_{k_0,m}(t)\,dt+\frac{k_0 R^{-N+1-k_0}}{2(N+k_0-1)}\int_0^R t^{\frac{k_0}{2}-1}\Upsilon_{k_0,m}(t)\,dt=0,
\end{equation*}
which in turn, substituted into \eqref{espansione}, gives us that for every $m\in \{1,2,\dots,m_{k_0}\}$
\begin{equation}\label{piuomeno}
\varphi_{k_0,m}(\lambda)=  O(\lambda^{\frac{k_0}{2}+\bar{\varepsilon}+1})\quad \text{as $\lambda\to 0^+$},
\end{equation}
if in addition we take into consideration that for every $m\in \{1,2,\dots,m_{k_0}\}$ 
\begin{equation*}
\int_0^\lambda t^{-N-\frac{k_0}{2}}\Upsilon_{k_0,m}(t)\,dt =O(\lambda^{\bar{\varepsilon}+1})\quad \text{as $\lambda\to 0^+$}
\end{equation*}
(for this we suggest the reader to consult the proof of \cite[Lemma 6.5]{DelFel}). Therefore, exploiting \eqref{coeffdiF} and \eqref{piuomeno}, we can deduce that for every $m\in \{1,2,\dots,m_{k_0}\}$ 
\begin{equation}\label{sfinita}
\sqrt{H(\lambda)}(U^\lambda, Y_{k_0,m})_{L^2(\mathbb{S}^N)}= O(\lambda^{\frac{k_0}{2}+\bar{\varepsilon}+1})\quad \text{as $\lambda\to 0^+$}.
\end{equation}
Now by \eqref{Hmaggioredisigma} we observe that $\sqrt{H(\lambda)}\geq \mathrm{const}\,\lambda^{\frac{k_0+\bar{\varepsilon}+1}{2}}$ for some $\mathrm{const}>0$ depending only on $\bar{\varepsilon}$ and for every $\lambda\in (0,r_0)$; so using this, from \eqref{sfinita} we obtain that 
\begin{equation}\label{sfinita1}
(U^\lambda, Y)_{L^2(\mathbb{S}^N)}= O(\lambda^{\frac{\bar{\varepsilon}+1}{2}})\quad \text{as $\lambda\to 0^+$}
\end{equation}
for every $Y\in \mathrm{span}\{Y_{k_0,m}:m\in \{1,2,\dots,m_{k_0}\}\}$. On the other hand, by Proposition \ref{propconvergenza} and the continuity of the trace map in \eqref{traceoperator}, for any sequence $\lambda_n\to 0^+$ there exist a subsequence (which we will still denote with $\lambda_n$) and an $L^2$-normalized function $\Psi\in \mathrm{span}\{Y_{k_0,m}:m\in \{1,2,\dots,m_{k_0}\}\}$ such that 
\begin{equation*}
U^{\lambda_n}\to \Psi \quad \text{in $L^2(\mathbb{S}^N)$ as $n\to\infty$}. 
\end{equation*}
We end up by observing that this leads to 
\begin{equation*}
(U^{\lambda_n}, \Psi)_{L^2(\mathbb{S}^N)} \to\Vert \Psi\Vert_{L^2(\mathbb{S}^N)}^2 =1 \quad \text{as $n\to\infty$},
\end{equation*}
which is in contradiction with \eqref{sfinita1}.
\end{proof}
We are now in the condition of proving the asymptotic behaviour around 0 of solutions to \eqref{problU} and accordingly for solutions to \eqref{problocaliz}.  
\begin{teo}\label{teocheassicura}
Let $k_0\geq 1$ be as in Lemma \ref{ellelegataautovalore}. If $m_{k_0}$ is the multiplicity of the eigenvalue $\mu_{k_0}$ and $\{Y_{k_0,m}\}_{m\in \{1,2,\dots,m_{k_0}\}}$ is a $L^2(\mathbb{S}^N)$-orthonormal basis of the eigenspace associated with $\mu_{k_0}$, then for every $m\in \{1,2,\dots,m_{k_0}\}$
\begin{equation}\label{asintotica1}
\frac{U(\lambda x)}{\lambda^{\frac{k_0}{2}}}\to |x|^{\frac{k_0}{2}}\sum_{m=1}^{m_{k_0}} \beta_m Y_{k_0,m}\left(\frac{x}{|x|}\right)\quad \text{in $H^1(B_1)$ as $\lambda\to 0^+$},
\end{equation}
where $(\beta_1,\beta_2,\dots,\beta_{m_{k_0}})\in \R^{m_{k_0}}\setminus\{0\}$ and for every $m\in \{1,2,\dots,m_{k_0}\}$
\begin{equation}\label{betamcomesonofatti}
\begin{split}
\beta_m=&\frac{\varphi_{k_0,m}(R)}{R^{\frac{k_0}{2}}}+\frac{2N+k_0-2}{2(N+k_0-1)}\int_0^R t^{-N-\frac{k_0}{2}}\Upsilon_{k_0,m}(t)\,dt\\
&+\frac{k_0 R^{-N+1-k_0}}{2(N+k_0-1)}\int_0^R t^{\frac{k_0}{2}-1}\Upsilon_{k_0,m}(t)\,dt\quad \text{for all $R\in (0,r_0]$}.
\end{split}
\end{equation}
\end{teo}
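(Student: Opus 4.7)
The plan is to combine the subsequential blow-up convergence from Proposition \ref{propconvergenza} with the asymptotic Fourier expansion \eqref{espansione} to pin down the limit along the entire family $\{U(\lambda\cdot)/\lambda^{k_0/2}\}$ and to identify its coefficients explicitly.

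First, I would rescale. By Lemmas \ref{lemmadellimitecheesiste}, \ref{ellelegataautovalore} and \ref{lemmalimpositivo}, the limit $L:=\lim_{r\to 0^+}H(r)/r^{k_0}$ exists and is strictly positive, so that
\begin{equation*}
\frac{U(\lambda x)}{\lambda^{k_0/2}}=\sqrt{\frac{H(\lambda)}{\lambda^{k_0}}}\,U^\lambda(x),
\end{equation*}
with $\sqrt{H(\lambda)/\lambda^{k_0}}\to\sqrt{L}$. Consequently Proposition \ref{propconvergenza} yields that every sequence $\lambda_n\to 0^+$ admits a subsequence $\{\lambda_{n_k}\}$ along which $U(\lambda_{n_k}x)/\lambda_{n_k}^{k_0/2}\to \sqrt{L}|x|^{k_0/2}\Psi(x/|x|)$ in $H^1(B_1)$, where $\Psi$ is an $L^2$-normalized eigenfunction of \eqref{problagliautovalori} associated with $\mu_{k_0}$; hence $\Psi=\sum_{m=1}^{m_{k_0}}\gamma_m Y_{k_0,m}$ for coefficients $(\gamma_m)$ depending \emph{a priori} on the subsequence.

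Second, I would identify these coefficients through \eqref{espansione}. Dividing that expansion by $\lambda^{k_0/2}$ and letting $\lambda\to 0^+$, using that $\bar\varepsilon+1>0$ by \eqref{eps} (so the remainder $O(\lambda^{\bar\varepsilon+1})$ vanishes) together with the absolute convergence of $\int_0^R t^{-N-k_0/2}\Upsilon_{k_0,m}(t)\,dt$ inherited from the derivation of \eqref{espansione}, we obtain, for each $m\in\{1,\dots,m_{k_0}\}$,
\begin{equation*}
\lim_{\lambda\to 0^+}\frac{\varphi_{k_0,m}(\lambda)}{\lambda^{k_0/2}}=\beta_m,
\end{equation*}
with $\beta_m$ given by \eqref{betamcomesonofatti}, a formula manifestly independent of any scaling sequence.

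Third, I would match the two computations. By definition \eqref{coeffdiF} and a change of variable on $\mathbb{S}^N$,
\begin{equation*}
\bigl(U^\lambda|_{\mathbb{S}^N},Y_{k_0,m}\bigr)_{L^2(\mathbb{S}^N)}=\frac{\varphi_{k_0,m}(\lambda)}{\sqrt{H(\lambda)}}\longrightarrow\frac{\beta_m}{\sqrt{L}}\quad\text{as }\lambda\to 0^+.
\end{equation*}
On the other hand, along the subsequence $\{\lambda_{n_k}\}$ the compactness of the trace map \eqref{traceoperator} together with strong $H^1(B_1)$-convergence gives $U^{\lambda_{n_k}}|_{\mathbb{S}^N}\to\Psi$ in $L^2(\mathbb{S}^N)$, whence its $Y_{k_0,m}$-coefficient tends to $\gamma_m$. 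Comparing, $\gamma_m=\beta_m/\sqrt{L}$ for every $m$, independently of the subsequence. By Urysohn's subsequence principle the whole family $U(\lambda\cdot)/\lambda^{k_0/2}$ converges in $H^1(B_1)$ to
\begin{equation*}
\sqrt{L}\,|x|^{k_0/2}\sum_{m=1}^{m_{k_0}}\frac{\beta_m}{\sqrt{L}}\,Y_{k_0,m}\!\left(\frac{x}{|x|}\right)=|x|^{k_0/2}\sum_{m=1}^{m_{k_0}}\beta_m Y_{k_0,m}\!\left(\frac{x}{|x|}\right),
\end{equation*}
proving \eqref{asintotica1}. The non-triviality of $(\beta_1,\dots,\beta_{m_{k_0}})$ is immediate from the $L^2$-normalization of $\Psi$, equivalently from $L>0$.

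The main obstacle is the clean passage to the limit in \eqref{espansione}: one must verify integrability of $\Upsilon_{k_0,m}$ near $t=0$ at the rate $t^{-N-k_0/2}$ and the smallness of the remainder after dividing by $\lambda^{k_0/2}$. Both rely on the sharp exponent $\bar\varepsilon\in(-1,0)$ produced in \eqref{eps} and on the integrability of $\tilde f$ guaranteed by \eqref{asssuftilde1}--\eqref{asssuftilde2}, which in turn descend from our standing assumptions \eqref{1}--\eqref{2}; these controls are the technical engine that converts subsequential blow-up convergence into the uniquely identified full-family asymptotics \eqref{asintotica1}.
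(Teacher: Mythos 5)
Your proof is correct and follows essentially the same route as the paper's: use Proposition~\ref{propconvergenza} (together with Lemmas~\ref{lemmadellimitecheesiste} and~\ref{lemmalimpositivo}) to get subsequential $H^1(B_1)$-convergence of $U(\lambda\cdot)/\lambda^{k_0/2}$ to a homogeneous profile, identify the Fourier coefficients of the profile via the trace and the expansion~\eqref{espansione}, observe that the resulting formula~\eqref{betamcomesonofatti} is subsequence-independent, and conclude by Urysohn's subsequence principle. The only cosmetic difference is that you introduce the auxiliary quantities $L=\lim_{r\to0^+}H(r)/r^{k_0}$ and the $L^2$-normalized coefficients $\gamma_m=\beta_m/\sqrt{L}$ of $\Psi$ as explicit bookkeeping, whereas the paper absorbs this normalization directly into the (a priori subsequence-dependent) vector $(\beta_1,\dots,\beta_{m_{k_0}})$ appearing in its formula~\eqref{precedente}; the underlying argument is the same.
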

\begin{proof}
By Proposition \ref{propconvergenza} it holds that for every sequence $\lambda_n\to 0^+$ there exist a subsequence $\{\lambda_{n_k}\}$ and a non-null vector $(\beta_1,\beta_2,\dots,\beta_{m_{k_0}})\in \R^{m_{k_0}}$ such that 
\begin{equation}\label{precedente}
\frac{U(\lambda_{n_k} x)}{\lambda_{n_k}^{k_0/2}}\to |x|^{\frac{k_0}{2}}\sum_{m=1}^{m_{k_0}} \beta_m Y_{k_0,m}\left(\frac{x}{|x|}\right)\quad \text{in $H^1(B_1)$ as $k\to \infty$},
\end{equation}
if we also use Lemmas \ref{lemmadellimitecheesiste} and \ref{lemmalimpositivo}. We remark that if we show \eqref{betamcomesonofatti} then \eqref{asintotica1} automatically follows from Urysohn's subsequence principle. Hence we proceed by combining \eqref{coeffdiF}, \eqref{precedente} with the continuity of the trace map in \eqref{traceoperator}, obtaining that for every $m\in \{1,2,\dots,m_{k_0}\}$
\begin{equation}\label{assemb}
\lim_{k\to\infty} \frac{\varphi_{k_0,m}(\lambda_{n_k})}{\lambda_{n_k}^{k_0/2}}=\sum_{j=1}^{m_{k_0}}\beta_j\int_{\mathbb{S}^N}Y_{k_0,j}(\vartheta)Y_{k_0,m}(\vartheta)\, ds=\beta_m,
\end{equation}
being $\{Y_{k_0,m}\}_{m\in \{1,2,\dots,m_{k_0}\}}$ orthonormal. So assembling \eqref{assemb} and \eqref{espansione}, we arrive at \eqref{betamcomesonofatti}. This completes the proof.
\end{proof}
\begin{proof}[proof of Theorem \ref{mainresult}]
If $u\in H^1(B_{\bar{r}})$ is a non-trivial weak solution to \eqref{problocaliz}, letting $F$ as in Section \ref{subdiffeo}, $U=u\circ F\in H^1(B_{\tilde{r}})$ is a non-trivial weak solution to \eqref{problU}. 
Theorem \ref{teocheassicura} ensures that there exist $k_0\geq 1$ and an eigenfunction $\Psi$ of problem \eqref{problagliautovalori} associated with $\mu_{k_0}$ such that 
\begin{equation}\label{convergenza1}
\frac{U(\lambda x)}{\lambda^{\frac{k_0}{2}}}\to |x|^{\frac{k_0}{2}}\Psi\left(\frac{x}{|x|}\right)\quad \text{in $H^1(B_1)$ as $\lambda\to 0^+$}. 
\end{equation}
Setting $U_\lambda(\cdot):=U(\lambda\cdot)$, it holds that 
\begin{equation}\label{cambi}
u(\lambda x)= U_\lambda\left(\frac{F^{-1}(\lambda x)}{\lambda}\right)\quad \text{and }\quad \nabla ( u(\lambda x))=\nabla U_\lambda\left(\frac{F^{-1}(\lambda x)}{\lambda}\right) \mathrm{Jac}F^{-1}(\lambda x)
\end{equation}
for every $x\in B_1$ and $\lambda<\tilde{r}$. 
Moreover, from \eqref{serveperlultimoteo} and \eqref{stimesuF}, we can deduce that as $\lambda\to 0^+$
\begin{equation*}
\left|\frac{F^{-1}(\lambda x)}{\lambda}-x\right|\to 0 \quad \text{and }\quad \Vert \mathrm{Jac}F^{-1}(\lambda x)-\mathrm{Id}_{N+1}\Vert \to 0\quad \text{uniformly in $B_1$}.
\end{equation*}
So the thesis follows from this, \eqref{convergenza1} and \eqref{cambi}. 
\end{proof}
\section*{Acknowledgments}
The author would like to thank Professor Veronica Felli for encouraged her to do this project.

\end{document}